\newcommand{\C}{\mathbb{C}}
\newcommand{\Q}{\mathbb{Q}}
\newcommand{\R}{\mathbb{R}}
\newcommand{\Z}{\mathbb{Z}}
\newcommand{\A}{\mathcal{A}}
\newcommand{\Ker}{\operatorname{Ker}}
\newcommand{\op}{\operatorname}
\theoremstyle{plain}
\newtheorem{thm}{Theorem}
\newtheorem{prop}[thm]{Proposition}
\newtheorem{lem}[thm]{Lemma}
\theoremstyle{definition}
\newtheorem{defn}[thm]{Definition}
\newtheorem{remark}[thm]{Remark}
\newtheorem{ex}[thm]{Example}
\newtheorem{notation}[thm]{Notation}
\numberwithin{equation}{section}
\numberwithin{thm}{section}
\title{Anchored symplectic embeddings}
\date{}
\author{Michael Hutchings, Agniva Roy, Morgan Weiler, and Yuan Yao}
\begin{document}
\maketitle
\begin{abstract}
Given two four-dimensional symplectic manifolds, together with knots in their boundaries, we define an ``anchored symplectic embedding'' to be a symplectic embedding, together with a two-dimensional symplectic cobordism between the knots (in the four-dimensional cobordism determined by the embedding). We use techniques from embedded contact homology to determine sharp quantitative critera for when anchored symplectic embeddings exist, for many examples of toric domains. In particular we find examples where ordinarily symplectic embeddings exist, but they cannot be upgraded to anchored symplectic embeddings unless one enlarges the target domain.
\end{abstract}

\section{Introduction}

\subsection{Basic definitions}

Let $(X,\omega)$ and $(X',\omega')$ be compact symplectic manifolds with boundary of the same dimension. A {\em symplectic embedding\/} of $(X,\omega)$ into $(X',\omega')$ is a smooth embedding $\varphi:X\to X'$ such that $\varphi^*\omega'=\omega$. Since Gromov proved the celebrated nonsqueezing theorem \cite{gromov} in 1985, there has been much work studying when symplectic embeddings exist; see e.g.\ the surveys \cite{cghs,pnas,schlenk-survey}. 

In this paper we consider a ``relative'' version of this question in the four dimensional case. Suppose that $(X,\omega)$ and $(X',\omega')$ are compact symplectic four-manifolds with boundary $Y$ and $Y'$. Let $\gamma$ and $\gamma'$ be oriented knots in $Y$ and $Y'$ respectively.

\begin{defn}
\label{def:ase}
An {\em anchored symplectic embedding\/}
\[
(X,\omega,\gamma)\longrightarrow(X',\omega',\gamma')
\]
is a pair $(\varphi,\Sigma)$, where
\[
\varphi:(X,\omega)\longrightarrow(\op{int}(X'),\omega')
\]
is a symplectic embedding, and
\[
\Sigma\subset X'\setminus\varphi(\op{int}(X))
\]
is a smoothly embedded symplectic surface, which we call an ``anchor'', such that
\begin{equation}
\label{eqn:anchored}
\partial\Sigma = \gamma' - \varphi(\gamma).
\end{equation}
We require that $\Sigma$ is tranverse to $\partial X' \cup \varphi(\partial X)$.
\end{defn}

\begin{remark}
In the examples we consider, $\gamma$ will be a closed characteristic in $\partial X$, i.e.\ a closed leaf of the characteristic foliation $\Ker(\omega|_{T\partial X})$, and likewise $\gamma'$ will be a closed characteristic in $\partial X'$. In this case the surface $\Sigma$ is automatically transverse to $\partial X' \cup \varphi(\partial X)$ because it is symplectic.
\end{remark}

\begin{remark}
The definition of ``anchored symplectic embedding'' also makes sense for symplectic manifolds of higher dimension. However this case is less interesting, because as pointed out in \cite{eg}, there is an $h$-principle for symplectic submanifolds of codimension greater than two \cite[Thm.\ 12.1.1]{em}. This allows a smooth embedding $\Sigma$ satisfying \eqref{eqn:anchored} to be isotoped to a symplectic embedding by a $C^0$-small isotopy under mild hypotheses. See also Remark~\ref{rem:hprinciple} below.
\end{remark}

\begin{figure}
        \centering
        \begin{overpic}[scale=0.7]{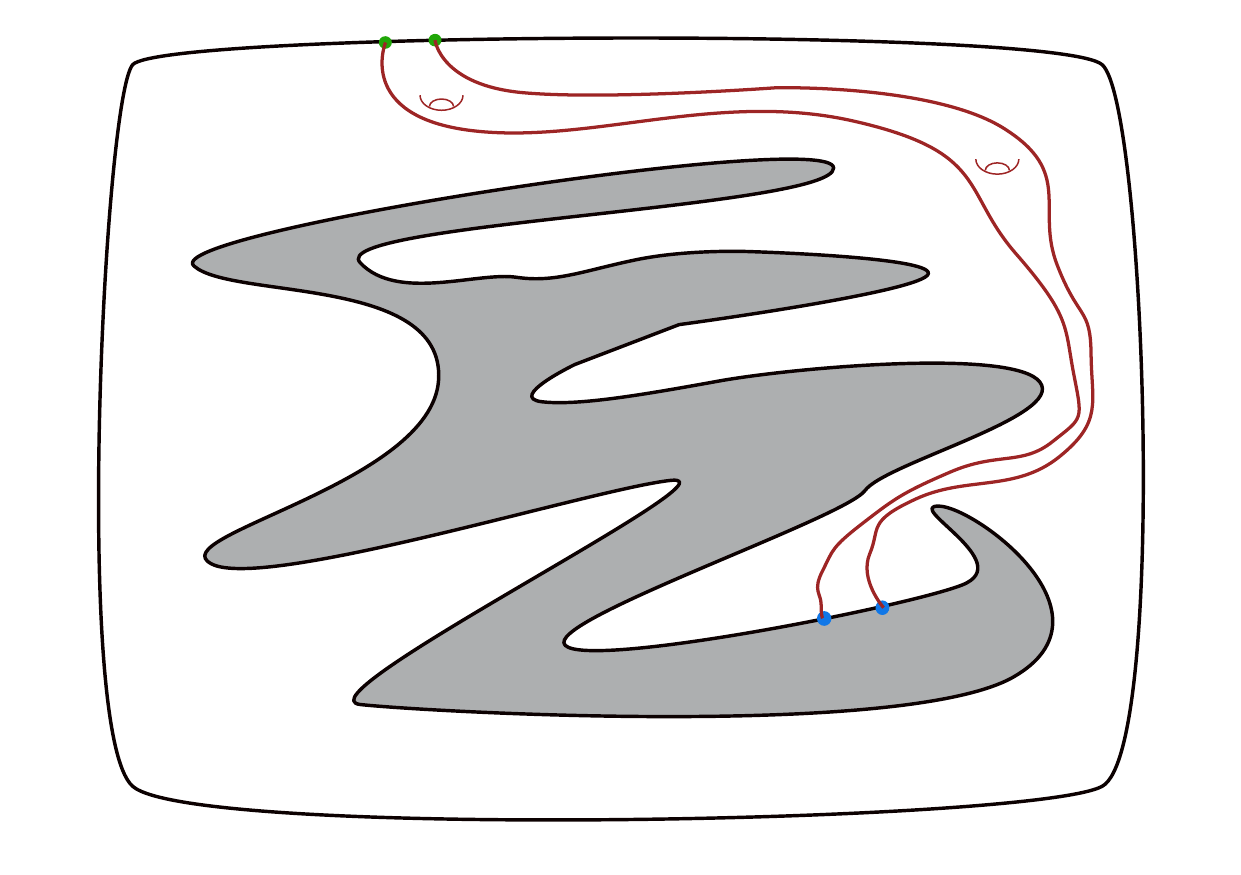}
        \put(5,5){$(X', \omega')$}
        \put(18,10){$\varphi(X, \omega)$}
        \put(21,7){{\color{blue}$\varphi(\gamma)$}}
        \put(8,18){{\color{Green}$\gamma'$}}
        \put(23,14){{\color{purple}$\Sigma$}}
        \end{overpic}
        \caption{A schematic of an anchored symplectic embedding $(\varphi,\Sigma) : (X,\omega,\gamma) \rightarrow (X',\omega',\gamma')$.}
                \label{fig:schematic}
        
    \end{figure}

The goal of this paper is to study quantitative obstructions to anchored symplectic embeddings. In particular, we will see examples where there does not exist an anchored symplectic embedding $(X,\omega,\gamma)\to(X',\omega',\gamma')$, although there does exist a symplectic embedding $(X,\omega)\to(X',\omega')$, and moreover there exists an anchored symplectic embedding $(X,\omega,\gamma)\to(X',r\omega,\gamma')$ if $r>1$ is sufficiently large. 

\begin{remark}
The following are some related works on slightly different questions.

The paper \cite{eg} studies the notion of ``symplectic hat''; there one starts with $(X,\omega,\gamma)$ and looks for an anchored symplectic embedding as in Definition~\ref{def:ase}, except that $(X',\omega')$ is a closed symplectic manifold, and $\gamma'=\emptyset$. The emphasis is on topological rather than quantitative criteria for the existence of symplectic hats, and $(X',\omega')$ may be much larger than $(X,\omega)$. 

There is also an ``orthogonal'' story giving quantitative obstructions to Lagrangian cobordisms between Legendrian knots or between transverse knots; see e.g.\ \cite{datta,ds,st10,st17}. Topological obstructions to Lagrangian cobordisms between Legendrian knots have been previously studied using the Chekanov-Eliashberg DGA; see e.g.\ the survey \cite[\S6]{etnyreng}.

In addition, from the perspective of $C^0$ symplectic geometry, there have been a few rigidity results concerning how symplectic homeomorphisms may act on codimension two symplectic submanifolds; see e.g. \cite{buhov-ops}. The recent papers \cite{ hkho, hkho-2} on symplectic barriers can also be interpreted as how sympletomorphisms can act on symplectic submanifolds: they construct a grid of symplectic submanifolds that must intersect with the image of any large symplectic ball. This can be thought of as saying there does not exist an ambient symplectomorphism that stretches the grid so that its complement contains a large symplectic ball. 
\end{remark}

%%%%%%%%%%%%%%%%%%%%%%%%%%%%%%%%%%

\subsection{Toric domains}

The examples of symplectic four-manifolds that we will consider are all ``toric domains'', which we now review.

\begin{defn}
\label{def:toricdomain}
Let $\Omega$ be a compact region in $\R^2_{\geq0}$. Assume that $0\in\op{int}(\Omega)$ and that $\partial\Omega$ consists of the line segment from $(0,0)$ to $(a,0)$ for some positive real number $a=a(\Omega)$, the line segment from $(0,0)$ to $(0,b)$ for some positive real number $b=b(\Omega)$, and a continuous curve $\partial_+\Omega$ in $\R^2_{\ge 0}$ from $(a,0)$ to $(0,b)$ which intersects the axes only at its endpoints. We define the \textit{toric domain}
\[
X_\Omega = \mu^{-1}(\Omega)
\]
where $\mu:\C^2\to\R^2_{\ge 0}$ is given by $(z_1,z_2)\mapsto\left(\pi|z_1|^2,\pi|z_2|^2\right)$. We equip $X_\Omega$ with the restriction of the standard symplectic form on $\R^4=\C^2$. Note that if the curve $\partial_+\Omega$ is smooth and transverse to the axes, then $\partial X_\Omega$ is a smooth hypersurface in $\R^4$, and in this case we say that $X_\Omega$ is {\em smooth\/}.

We say that $X_\Omega$ is a {\em convex toric domain\/}\footnote{A ``convex toric domain'' is not the same as a toric domain that is convex; see the discussion in \cite[\S2]{ghr}. Different and broader notions of ``convex toric domain'' are studied in \cite{Cristofaro-Gardiner19,Cristofaro-GardinerHolmMandiniPires}.} if the set
\[
\widehat{\Omega} = \{\mu\in\R^2\mid (|\mu_1|,|\mu_2|)\in\Omega\}
\]
is convex.
We say that $X_\Omega$ is a {\em concave toric domain\/} if the set $\R^2_{\ge 0}\setminus\Omega$ is convex. See Figure~\ref{fig:three graphs}.
\end{defn}

\begin{figure}
     \centering
     \begin{subfigure}[b]{0.49\textwidth}
         \centering
         \begin{overpic}[scale=0.7]{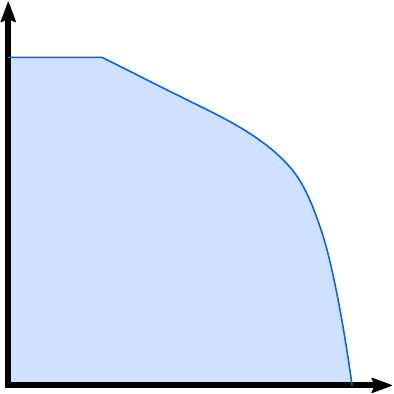}
         \put(2.75,2.75){$\Omega$}
\put(9.8,0){$\pi r_1^2$}
\put(-.4,10){$\pi r_2^2$}
        \end{overpic}
         \caption{Convex}
         %\label{fig:convex}
     \end{subfigure}
     \hfill
     \begin{subfigure}[b]{0.49\textwidth}
         \centering
         \begin{overpic}[scale=0.7]{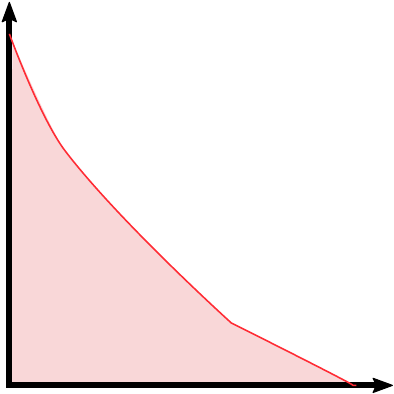}
         \put(2,2){$\Omega'$}
\put(9.8,0){$\pi r_1^2$}
\put(-.4,10){$\pi r_2^2$}
        \end{overpic}
         \caption{Concave}
         %\label{fig:concave}
     \end{subfigure}
        \caption{Regions $\Omega$ and $\Omega'$ in $\R^2_{\geq0}$ determining convex and concave toric domains respectively. Neither is smooth.}
        \label{fig:three graphs}
\end{figure}

\begin{ex}
The following are some basic examples of convex toric domains.
\begin{enumerate}[label=(\roman*)]
\item If $\Omega$ is the triangle with vertices $(0,0)$, $(r,0)$, and $(0,r)$, then $X_\Omega$ is the {\em ball\/}
\[
B^4(r)=\{z\in\C^2\mid \pi|z|^2\le r\}.
\]
\item More generally, if $\Omega$ is the triangle with vertices $(0,0)$, $(a,0)$, and $(0,b)$, then $X_\Omega$ is the \textit{ellipsoid}
\[
E(a,b) =\left\{(z_1,z_2)\in\C^2\;\bigg|\;\frac{\pi|z_1|^2}{a}+\frac{\pi|z_2|^2}{b}\leq1\right\}.
\]
\item If $\Omega$ is the rectangle $[0,a]\times[0,b]$, then $X_\Omega$ is the \textit{polydisk}
\[
P(a,b)=\left\{(z_1,z_2)\in\C^2\;\big|\;\pi|z_1|^2\leq a, \quad\pi|z_2|^2\leq b\right\}.
 \]
Note that the ellipsoid $E(a,b)$ is smooth, but the polydisk $P(a,b)$ is not smooth.
\end{enumerate}
\end{ex}

\begin{remark}
\label{rem:exceptionalorbits}
If $X_\Omega$ is a smooth toric domain, then there are two distinguished closed characteristics in $\partial X_\Omega$, given by the circles $(\pi|z_1|^2=a(\Omega),\; z_2=0)$ and $(z_1=0,\; \pi|z_2|^2=b(\Omega))$. We denote these closed characteristics by $e_{1,0}=e_{1,0}(X_\Omega)$ and $e_{0,1}=e_{0,1}(X_\Omega)$ respectively.
\end{remark}

\begin{remark}
\label{rem:inclusion}
If $\Omega\subset\op{int}(\Omega')$, then there is an anchored symplectic embedding
\[
(\varphi,\Sigma) : (X_\Omega,e_{1,0}) \longrightarrow (X_{\Omega'},e_{1,0}).
\]
Here $\varphi$ is given by the inclusion map $X_\Omega\to X_{\Omega'}$, while $\Sigma$ is the annulus
\[
\left\{z\in\C^2\;\big|\;a(\Omega) \le \pi|z_1|^2 \le a(\Omega'),\quad z_2=0\right\}.
\]
\end{remark}

%%%%%%%%%%%%%%%%%%%%%%%%%%%%%%%%%%

\subsection{Statement of main results}

We now consider examples where the inclusion in Remark~\ref{rem:inclusion} is optimal.

\begin{thm}
\label{thm:polydiskball}
(proved in \S\ref{sec:convexproofs})
If $a>1$, then there exists an anchored symplectic embedding $(P(a,1),e_{1,0}) \rightarrow (B^4(c), e_{1,0})$ if and only if $c > a+1$, or equivalently $P(a,1) \subset \op{int}(B^4(c))$.
\end{thm}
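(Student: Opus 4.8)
The plan is to treat the two implications separately, with essentially all the work in the ``only if'' direction. For the equivalence of the two conditions on $c$, note that $P(a,1)$ is the toric domain of the rectangle $[0,a]\times[0,1]$ and $B^4(c)$ that of the triangle with vertices $(0,0)$, $(c,0)$, $(0,c)$, so the rectangle lies in the open triangle exactly when its outer corner $(a,1)$ does, i.e.\ when $a+1<c$. Assuming $c>a+1$, I would produce the anchored embedding exactly as in Remark~\ref{rem:inclusion}: take $\varphi$ to be the inclusion and let $\Sigma=\{a\le\pi|z_1|^2\le c,\ z_2=0\}$, a symplectic annulus with $\partial\Sigma=e_{1,0}(B^4(c))-e_{1,0}(P(a,1))$, as in \eqref{eqn:anchored}. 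That $P(a,1)$ is not smooth is harmless, since $e_{1,0}$ and this annulus are still well defined.

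For the converse, the first step is the area identity. Writing $\omega=d\lambda$ for the standard primitive on $\C^2$, Stokes' theorem gives $\int_\Sigma\omega=\int_{\gamma'}\lambda-\int_{\varphi(\gamma)}\lambda$; since $P(a,1)$ and $B^4(c)$ are simply connected, $\varphi^*\lambda-\lambda$ is exact, so $\int_{\varphi(\gamma)}\lambda=\int_\gamma\lambda=a$, while $\int_{\gamma'}\lambda=c$. Hence $\op{area}(\Sigma)=c-a$, which is positive and gives the cheap bound $c>a$. The entire content of the theorem is to upgrade this to $\op{area}(\Sigma)>1$, where $1=b(P(a,1))$ is the thin width of the polydisk. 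This sharper bound must genuinely use the anchor: the ordinary embedding problem $P(a,1)\hookrightarrow B^4(c)$ already has solutions for $c$ of order $\sqrt{a}$ (the ordinary obstruction is essentially the volume bound) when $a$ is large, far below $a+1$.

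For the improvement I would pass to holomorphic curves. Compactify $B^4(c)$ to $\mathbb{CP}^2$ with line at infinity of area $c$, and smooth the union of $\Sigma$, the area-$a$ disk $\{z_2=0,\ \pi|z_1|^2\le a\}\subset P(a,1)$ bounded by $e_{1,0}(P(a,1))$ (capping the negative end of $\Sigma$), and the point at infinity, into a symplectic sphere $\bar S$ of area $c$, necessarily in the line class $[L]$. Writing $P(a,1)=D(a)\times D(1)$ with $D(r)=\{\pi|z|^2\le r\}$, I would choose an almost complex structure $J$ for which $\varphi$ is holomorphic on $P(a,1)$ and $\bar S$ is $J$-holomorphic; then $\bar S\cap\varphi(P(a,1))=\varphi(D(a)\times\{0\})$, while the area-$1$ disks $\varphi(\{w\}\times D(1))$ are $J$-holomorphic and each meets $\bar S$ once. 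Using positivity of intersections together with the identification $\mathbb{CP}^2\setminus\bar S\cong\op{int}(B^4(c))$, the plan is to show that the part of $\bar S$ lying outside $\varphi(P(a,1))$ --- which is precisely $\Sigma$ --- must carry area strictly more than the transverse factor $D(1)$, i.e.\ $\op{area}(\Sigma)>1$, equivalently $c>a+1$.

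The hard part is extracting the sharp constant $1$ rather than merely reproving $c>a$: one must control the $J$-holomorphic configuration near the orbit $e_{1,0}$ and the polydisk corners (after perturbing $\partial P(a,1)$ to be nondegenerate), arrange $J$ to be simultaneously admissible, generic, and product-compatible, and rule out bubbling so that the area and intersection bookkeeping is exact and yields the \emph{strict} inequality. A cleaner alternative, and the one I would expect to align with the paper's embedded contact homology methods, is to realize $\Sigma$ through the cobordism map $ECH(\partial B^4(c))\to ECH(\partial P(a,1))$ applied to the class of $e_{1,0}$, where the extra $+1$ should appear as the action of $e_{0,1}(P(a,1))$ forced in by an ECH-index/Conley--Zehnder grading shift; this is where the genuine work lies.
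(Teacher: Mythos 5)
Your ``if'' direction and the preliminary area computation are both correct and match the paper: the inclusion plus the annulus of Remark~\ref{rem:inclusion} handles existence when $c>a+1$, and Stokes' theorem applied to the anchor gives $\operatorname{area}(\Sigma)=c-a>0$. The problem is that everything after that is a plan rather than a proof, and the plan as stated does not close. In the $\mathbb{CP}^2$ picture, the disks $\varphi(\{w\}\times D(1))$ have boundary on $\varphi(\partial P(a,1))$, so ``each meets $\bar S$ once'' is not a homological statement, and positivity of intersections yields no area inequality for $\Sigma$; indeed the total area of $\bar S$ is exactly $a+(c-a)=c$ with nothing left over, so there is no bookkeeping by which the transverse disks force $\operatorname{area}(\Sigma)>1$. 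You acknowledge this (``the hard part is extracting the sharp constant''), but that hard part is the entire content of the theorem.

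The paper's actual mechanism, which your final sentence gestures at but does not identify, is not a grading or Conley--Zehnder shift. One passes to $L$-nice perturbations $X_-\subset P(a,1)$ and $X_+\supset B^4(c)$ and the completed cobordism $\overline W$, makes the straightened, completed anchor into a $J$-holomorphic cylinder $C_1\in\mathcal{M}^J(e_{1,0},e_{1,0})$, and applies the ECH cobordism chain map $\phi$ of Proposition~\ref{prop:cobordism}(c) to the index-$4$ cycle $e_{1,1}$ on $\partial X_+$, which is homologically nontrivial by Proposition~\ref{prop:eab}. The index-$4$ generators on $\partial X_-$ are $e_{1,0}^2$, $e_{1,1}$, and $e_{0,1}^2$, with $\Omega$-actions approximately $2a$, $a+1$, and $2$; without the anchor the action filtration would only give $c\gtrsim 2$. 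The anchor enters through Lemma~\ref{lem:convexlinking}(a): any holomorphic current contributing to $\langle\phi e_{1,1},\alpha_-\rangle$ must intersect $C_1$ nonnegatively, which translates into the linking-number inequality $y(\alpha_-)\le y(e_{1,1})=1$ and kills the coefficient of $e_{0,1}^2$. Hence $\phi(e_{1,1})$ must contain $e_{1,0}^2$ or $e_{1,1}$, and action monotonicity gives $c\ge\min(2a,a+1)=a+1$ since $a>1$. It is this use of intersection positivity against the anchor cylinder to exclude the low-action target $e_{0,1}^2$ that produces the sharp constant, and it is absent from your argument.
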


\begin{remark}
The statement of Theorem~\ref{thm:polydiskball} makes sense, even though $P(a,1)$ is not smooth, because $e_{1,0}(P(a,1))$ is contained in the smooth part of $\partial P(a,1)$.
\end{remark}

\begin{remark}
\label{rem:folding}
If $1\le a \le 2$, then already the existence of a symplectic embedding $P(a,1)\to B^4(c)$ implies\footnote{See also \cite[Thm.\ 1]{ho} for a stronger result about Lagrangian embeddings which implies this.} that $c\ge a+1$, by \cite[Thm.\ 1.3]{Hutchings16}. 
If $a>2$, then better symplectic embeddings are possible. In particular, if $a>2$ then one can use symplectic folding \cite[Prop.\ 4.3.9]{schlenk-folding} to construct a symplectic embedding $\varphi:P(a,1)\to B^4(c)$ whenever\footnote{This lower bound on $c$ is known to be optimal when $2 \le a \le \frac{5+\sqrt{7}}{3}\approx 2.54$ by \cite{cn}. When $a>6$, better symplectic embeddings are possible using multiple symplectic folding \cite[\S4.3.2]{schlenk-folding}.} $c > 2 + a/2$.
\end{remark}

The following is a much larger family of examples (which however does not include Theorem~\ref{thm:polydiskball} as a special case).

\begin{thm}
\label{thm:convex1}
(proved in \S\ref{sec:convexproofs})
Let $X_{\Omega}$ and $X_{\Omega'}$ be convex toric domains in $\R^4$. Suppose that
\begin{equation}
\label{eqn:ab}
a(\Omega) > b(\Omega').
\end{equation}
If there exists an anchored symplectic embedding
\begin{equation}
\label{eqn:convex1}
(X_{\Omega}, e_{1,0}) \longrightarrow (X_{\Omega'}, e_{1,0})
\end{equation}
then $\Omega \subset \Omega'$. 
\end{thm}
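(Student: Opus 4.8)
The plan is to move the whole problem into the complement cobordism and study the anchor as a pseudoholomorphic curve. Set $W := X_{\Omega'}\setminus\varphi(\op{int}(X_\Omega))$; this is an exact symplectic cobordism whose concave boundary is $\varphi(\partial X_\Omega)$ and whose convex boundary is $\partial X_{\Omega'}$. The first, essentially bookkeeping, step is an area identity: choosing a primitive $\lambda'$ of $\omega'$ on $\R^4$ and applying Stokes' theorem to $\Sigma$, whose boundary is $e_{1,0}(X_{\Omega'})-\varphi(e_{1,0}(X_\Omega))$, and using that a symplectic embedding preserves the action of a closed characteristic (so the actions are $a(\Omega')$ and $a(\Omega)$ respectively), gives
\[
0<\int_\Sigma\omega'=a(\Omega')-a(\Omega).
\]
Thus $a(\Omega)<a(\Omega')$. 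This controls only the $x$-intercepts, and the entire force of the theorem is to promote it to containment in every direction.

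For the upgrade I would use the standard convex reformulation. Since $\Omega$ and $\Omega'$ are convex, $\Omega\subset\Omega'$ is equivalent to the support-function inequality $h_\Omega(p,q)\le h_{\Omega'}(p,q)$ for all $(p,q)$ with $p,q\ge0$, where $h_\Omega(p,q)=\max_{(x,y)\in\Omega}(px+qy)$. For a primitive integer direction $(p,q)$ this support value is exactly the action of the closed characteristic on the face of $\partial_+\Omega$ with outer normal $(p,q)$, and arbitrary real directions follow by continuity of $h$; so it suffices to prove the action inequality in each rational class. To access these, I would complete $W$ with cylindrical ends and choose an $\omega'$-compatible, cylindrical-at-infinity almost complex structure $J$ for which the symplectic surface $\Sigma$ is $J$-holomorphic. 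The anchor then becomes an embedded $J$-holomorphic curve $C$ with one simple positive end on $e_{1,0}(X_{\Omega'})$ and one simple negative end on $e_{1,0}(X_\Omega)$, of fixed genus and asymptotics. For each rational $(p,q)$, the convexity of $X_{\Omega'}$ and the structure of its embedded contact homology should supply, for any admissible $J$, a $J$-holomorphic curve realizing $h_{\Omega'}(p,q)$; intersecting it positively with $C$ and feeding the outcome into the ECH index inequality for $C$ is the mechanism I would use to extract $h_\Omega(p,q)\le h_{\Omega'}(p,q)$.

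The step I expect to be the main obstacle, and where the hypothesis $a(\Omega)>b(\Omega')$ enters decisively, is the SFT compactness. As $J$ is varied to bring in the ECH curves of the target, $C$ can converge to a holomorphic building, and one must exclude limiting configurations with components ending on the short target orbit $e_{0,1}(X_{\Omega'})$, whose action is $b(\Omega')$, or on its low iterates; such spurious levels would destroy both the intersection count and the index inequality in the limit. The inequality $a(\Omega)>b(\Omega')$ says precisely that the action carried into the negative end of $C$ strictly exceeds $b(\Omega')$, which is exactly the action budget needed to forbid these breakings in the bookkeeping. Establishing that the limiting building of $C$ therefore consists only of the expected pieces — so that positivity of intersection and the ECH index inequality persist in the limit — is the crux. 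Granting it, the support-function inequality holds in every rational direction, and hence $\Omega\subset\Omega'$.
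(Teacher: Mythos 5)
Your overall strategy --- reduce $\Omega\subset\Omega'$ to the support-function inequality $\|(a,b)\|_{\Omega}^*\le\|(a,b)\|_{\Omega'}^*$ in each rational direction, make the anchor $J$-holomorphic in the completed cobordism $\overline{W}$, and use ECH to supply curves whose action monotonicity yields the inequality --- is the paper's strategy. But there is a genuine gap at the step you correctly flag as the crux, and your proposed resolution is not the right mechanism. The ECH cobordism map only guarantees a holomorphic current $\mathcal{C}\in\mathcal{M}^J(e_{a,b},\Lambda)$ for \emph{some} generator $\Lambda$ of the same ECH index; to extract the action inequality you must pin down $\Lambda=e_{a,b}$. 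Positivity of intersection of $\mathcal{C}$ with the anchor cylinder over $e_{1,0}$ controls only one coordinate of the negative asymptotics ($y(\Lambda)\le b$, as in Lemma~\ref{lem:convexlinking}(a)); to get the other bound $x(\Lambda)\le a$, and then conclude $\Lambda=e_{a,b}$ via the $J_0$ bound, you need a \emph{second} holomorphic cylinder in $\mathcal{M}^J(e_{0,1},e_{0,1})$. In Theorem~\ref{thm:convex2} this comes from the second anchor; here there is only one anchor, and producing the second cylinder is exactly where the hypothesis $a(\Omega)>b(\Omega')$ enters.

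The correct use of that hypothesis is not an SFT-compactness/breaking exclusion as $J$ varies (no family of $J$'s is needed; the anchor is made holomorphic for a single fixed $J$ before the chain map is invoked). Rather: $e_{0,1}$ represents the nonzero class in $ECH_2(Y_+)$ (Proposition~\ref{prop:eab}), so $\phi(e_{0,1})$ is a nonzero cycle in degree $2$ downstairs, hence equals $e_{0,1}$ or $e_{1,0}$; action monotonicity of the cobordism map together with $\mathcal{A}(e_{0,1}(X_{\Omega'}))\approx b(\Omega')<a(\Omega)\approx\mathcal{A}(e_{1,0}(X_\Omega))$ rules out $\phi(e_{0,1})=e_{1,0}$, and then Lemma~\ref{lem:tamecobordism}(b) produces the desired curve $C_2\in\mathcal{M}^J(e_{0,1},e_{0,1})$. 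Your ``action budget'' intuition compares the right two numbers, but applies them to the wrong object (a hypothetical degeneration of the anchor rather than the image of $e_{0,1}$ under the cobordism map). Without this step your argument cannot exclude, say, negative asymptotics $\Lambda$ with $x(\Lambda)>a$, and the support-function inequality does not follow.
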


\begin{remark}
The existence of an anchored symplectic embedding \eqref{eqn:convex1} forces $a(\Omega)<a(\Omega')$, for the simple reason that the anchor $\Sigma$ must have positive symplectic area, and this symplectic area would be $a(\Omega')-a(\Omega)$.
\end{remark}

\begin{remark}
There are many examples of pairs of convex toric domains $X_\Omega$ and $X_{\Omega'}$ for which a symplectic embedding $X_\Omega\to \op{int}(X_{\Omega'})$ exists, but an anchored symplectic embedding $(X_\Omega,e_{1,0})\to(X_{\Omega'},e_{1,0})$ is obstructed by Theorem~\ref{thm:convex1}, even though the positive area condition $a(\Omega)<a(\Omega')$ holds. For example, if $a>2$ then one can use symplectic folding as in \cite[\S4.3.2]{schlenk-folding} to construct a symplectic embedding of $P(a,1)$ into an arbitrarily small neighborhood of $P(1+a/2,2) \cap B(2+a/2)$. In particular there exists a symplectic embedding $P(8,2) \to E(11,7)$. However Theorem~\ref{thm:convex1} shows that such an embedding cannot be upgraded to an anchored symplectic embedding $(P(8,2),e_{1,0})\to(E(11,7),e_{1,0})$.
\end{remark}

Without the hypothesis \eqref{eqn:ab}, we can prove a similar result for ``2-anchored symplectic embeddings''. Suppose that $(X,\omega)$ and $(X',\omega')$ are compact symplectic four-manifolds with boundary $Y$ and $Y'$. Let $\gamma_1$ and $\gamma_2$ be disjoint knots in $Y$, and let $\gamma_1'$ and $\gamma_1'$ be disjoint knots in $Y'$.

\begin{defn}
\label{def:2ase}
A {\em 2-anchored symplectic embedding\/}
\[
(X,\omega,\gamma_1,\gamma_2)\longrightarrow(X',\omega',\gamma_1',\gamma_2')
\]
is a triple $(\varphi,\Sigma_1,\Sigma_2)$, where
\[
\varphi:(X,\omega)\longrightarrow(\op{int}(X'),\omega')
\]
is a symplectic embedding, and
\[
\Sigma_1,\Sigma_2\subset X'\setminus\varphi(\op{int}(X))
\]
are disjoint smoothly embedded symplectic surfaces such that
\[
\partial\Sigma_i = \gamma_i' - \varphi(\gamma_i).
\]
We also require that $\Sigma_i$ is transverse to $\partial X' \cup \varphi(\partial X)$.
\end{defn}

\begin{thm}
\label{thm:convex2}
(proved in \S\ref{sec:convexproofs})
Let $X_{\Omega}$ and $X_{\Omega'}$ be convex toric domains in $\R^4$. If there exists a 2-anchored symplectic embedding 
\[
(X_{\Omega}, e_{1,0},e_{0,1}) \longrightarrow (X_{\Omega'},e_{1,0},e_{0,1}),
\]
then $\Omega \subset \Omega'$.
\end{thm}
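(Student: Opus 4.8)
The plan is to reduce the containment $\Omega\subset\Omega'$ to a family of action inequalities indexed by boundary directions, and to establish each of these by using the two anchors to control the ``toric winding'' of holomorphic currents in the cobordism. After a standard smoothing we may assume $X_\Omega$ and $X_{\Omega'}$ are smooth.

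\textbf{Step 1 (reduction to support functions).} Since $X_\Omega$ and $X_{\Omega'}$ are convex, the sets $\widehat\Omega,\widehat{\Omega'}$ are convex and invariant under $\mu_i\mapsto -\mu_i$, so $\Omega\subset\Omega'$ is equivalent to the support-function inequality $\max_{(x,y)\in\Omega}(mx+ny)\le \max_{(x,y)\in\Omega'}(mx+ny)$ for all $(m,n)\in\R^2_{\ge 0}$, and by continuity it suffices to treat primitive $(m,n)\in\Z^2_{\ge 0}$. Under the usual toric dictionary these two quantities are the symplectic actions of the Reeb orbits $\gamma_{(m,n)}$ on $\partial X_\Omega$ and $\gamma'_{(m,n)}$ on $\partial X_{\Omega'}$ of combinatorial type $(m,n)$, since the primitive orbit of type $(m,n)$ sits over the point of $\partial_+\Omega$ whose outward normal is parallel to $(m,n)$, where its action equals $\max_\Omega(mx+ny)$. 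The directions $(1,0)$ and $(0,1)$ give $a(\Omega)\le a(\Omega')$ and $b(\Omega)\le b(\Omega')$, which already follow from positivity of the areas of $\Sigma_1,\Sigma_2$, so the content lies in the interior directions.

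\textbf{Step 2 (the cobordism current and its area).} Complete $W=X_{\Omega'}\setminus\varphi(\op{int}X_\Omega)$ to $\widehat W$ by attaching cylindrical ends, and choose a cobordism-compatible $J$ making the anchors $J$-holomorphic; they complete to proper holomorphic cylinders $\widehat\Sigma_1,\widehat\Sigma_2$ with positive ends $e_{1,0}',e_{0,1}'$ and negative ends $e_{1,0},e_{0,1}$. Because $W$ is an exact cobordism, the machinery of ECH cobordism maps used for Theorem~\ref{thm:convex1} (nonvanishing of the map on the class of $\gamma'_{(m,n)}$) furnishes a broken $J$-holomorphic current $\mathcal C$ in $\widehat W$ with a single positive end at $\gamma'_{(m,n)}$ and negative ends at orbits $\gamma_{(m_i,n_i)}$ with multiplicities $a_i\ge 0$. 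Exactness then yields the action bound
\[
\max_{\Omega'}(mx+ny)\ \ge\ \sum_i a_i\,\max_{\Omega}(m_i x+n_i y).
\]

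\textbf{Step 3 (winding control via the two anchors).} Using $\op{lk}(\gamma_{(p,q)},e_{1,0})=q$ and $\op{lk}(\gamma_{(p,q)},e_{0,1})=p$, a relative intersection computation identifies the algebraic intersection numbers of $\mathcal C$ with the anchors as the discrepancies in toric winding between the top and bottom ends, and positivity of intersections (valid since for interior $(m,n)$ the current and the anchors share no components) gives
\[
\mathcal C\cdot\widehat\Sigma_1=\Big(\textstyle\sum_i a_i n_i\Big)-n\ \ge\ 0,
\qquad
\mathcal C\cdot\widehat\Sigma_2=\Big(\textstyle\sum_i a_i m_i\Big)-m\ \ge\ 0.
\]
Writing $m'=\sum_i a_i m_i\ge m$ and $n'=\sum_i a_i n_i\ge n$, subadditivity of the support function ($\sum_i\max_\Omega f_i\ge\max_\Omega\sum_i f_i$) together with $x,y\ge 0$ on $\Omega$ gives
\[
\max_{\Omega'}(mx+ny)\ \ge\ \sum_i a_i\max_\Omega(m_i x+n_i y)\ \ge\ \max_\Omega(m'x+n'y)\ \ge\ \max_\Omega(mx+ny),
\]
which is the desired inequality; ranging over all primitive $(m,n)$ yields $\Omega\subset\Omega'$.

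The crux is the intersection-theoretic bookkeeping of Step 3: identifying $\mathcal C\cdot\widehat\Sigma_j$ with the winding discrepancy, with the correct sign, requires the relative adjunction formula and control of the asymptotic linking and writhe of $\mathcal C$ at its ends (including multiply covered ends and the Morse--Bott perturbation), in the spirit of Hutchings' ECH index inequality, and one must propagate both the action bound and the intersection numbers additively across the levels of a broken configuration. One must also confirm positivity of intersections applies — immediate for interior directions, and requiring a separate easy argument when an end of $\mathcal C$ is itself an axis orbit — and reconcile the competing demands that $J$ make the anchors holomorphic yet be generic for the cobordism-map curve count, which is arranged by fixing the embedded anchors first and perturbing $J$ among almost complex structures preserving them. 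I expect this step, rather than the existence of the curve or the convex-geometry reduction, to be the main obstacle, and it is precisely here that both anchors are indispensable: a single anchor bounds the winding in only one axis direction, which is why Theorem~\ref{thm:convex1} must instead assume $a(\Omega)>b(\Omega')$ to control the other, whereas the two anchors bound $\sum_i a_i m_i$ and $\sum_i a_i n_i$ simultaneously and so handle every direction with no numerical hypothesis.
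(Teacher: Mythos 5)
Your overall skeleton matches the paper's: reduce $\Omega\subset\Omega'$ to the action inequalities $\|(m,n)\|^*_\Omega\le\|(m,n)\|^*_{\Omega'}$ for primitive $(m,n)$ (this is Lemma~\ref{lem:Aab}), produce a holomorphic current in the completed cobordism with positive end $e_{m,n}$ via an ECH chain map, constrain its negative ends by positivity of intersections with the two completed anchors, and finish with the action filtration. However, the key inequality in your Step 3 has the wrong sign, and this is fatal to the argument as written. For properly embedded surfaces $S,T$ in $W=X_+\setminus\varphi(\op{int}(X_-))$ with $\partial S=\sigma_+-\sigma_-$ and $\partial T=\tau_+-\tau_-$, capping off $\sigma_-$ and $\tau_-$ inside $X_-$ gives $\#(S\cap T)=\ell_{Y_+}(\sigma_+,\tau_+)-\ell_{Y_-}(\sigma_-,\tau_-)$, i.e.\ top linking minus bottom linking; this is exactly \eqref{eqn:elleta}. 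Positivity of intersections with $\widehat\Sigma_1$ and $\widehat\Sigma_2$ therefore bounds the winding of the \emph{negative} ends from \emph{above}: $\sum_i a_i n_i\le n$ and $\sum_i a_i m_i\le m$, not from below as you assert. With the correct direction your chain of inequalities collapses: setting $m'=\sum_i a_im_i\le m$ and $n'=\sum_i a_in_i\le n$, you only obtain $\max_{\Omega'}(mx+ny)\ge\max_\Omega(m'x+n'y)$, and $\max_\Omega(m'x+n'y)\le\max_\Omega(mx+ny)$, which is useless; a priori the negative end could even have very small total winding and action.

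What rescues the argument in the paper is index-theoretic input that you omit. Since the chain map preserves the ECH grading, the negative generator $\Lambda$ satisfies $\widehat I(\Lambda)=\widehat I(e_{m,n})$; combining this with the correctly signed linking bounds (which give $x(\Lambda)+y(\Lambda)\le m+n$) and with the topological complexity bound \eqref{eqn:J0bound} coming from relative adjunction forces $g=0$, $h(\Lambda)=0$, equality in the linking bounds, and finally $\Lambda=e_{m,n}$ by a lattice-point count; this is Lemma~\ref{lem:beyond}. One must also know that $e_{m,n}$ is a cycle representing a nonzero class, so that $\phi(e_{m,n})\neq 0$ in homology and a current out of $e_{m,n}$ actually exists: you assume this in passing, but it is the content of Proposition~\ref{prop:eab}, proved by a separate cobordism argument from an auxiliary ellipsoid $E(mc,nc)$ in which $e_{m,n}$ is ``minimal''. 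Your closing remarks about automatic transversality, perturbing $J$ rel the anchors, and the role of the hypothesis \eqref{eqn:ab} in Theorem~\ref{thm:convex1} (namely, manufacturing the second cylinder) are on target, but the heart of the proof is precisely the index argument you hoped to bypass.
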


We also have an analogous result for concave toric domains:

\begin{thm}
\label{thm:concave2}
(proved in \S\ref{sec:concaveproofs})
Let $X_{\Omega}$ and $X_{\Omega'}$ be concave toric domains in $\R^4$. If there exists a 2-anchored symplectic embedding 
\[
(X_{\Omega}, e_{1,0},e_{0,1}) \longrightarrow (X_{\Omega'},e_{1,0},e_{0,1}),
\]
then $\Omega \subset \Omega'$.
\end{thm}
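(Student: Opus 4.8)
The plan is to reduce the containment $\Omega\subset\Omega'$ to a family of inequalities between the symplectic actions of torus-invariant Reeb orbits, and then to establish each such inequality from the embedding and the two anchors using intersection positivity in the completed cobordism, paralleling the proof of Theorem~\ref{thm:convex2} but with convexity of the complements $\R^2_{\ge 0}\setminus\Omega$ replacing convexity of $\widehat{\Omega}$. Since $X_\Omega$ and $X_{\Omega'}$ are concave, the sets $K:=\R^2_{\ge 0}\setminus\Omega$ and $K':=\R^2_{\ge 0}\setminus\Omega'$ are convex, and $\Omega\subset\Omega'$ is equivalent to $K'\subset K$, which by comparison of support functions is equivalent to
\[
\min_{(x,y)\in K}(mx+ny)\ \le\ \min_{(x,y)\in K'}(mx+ny)\qquad\text{for all }(m,n)\in\R^2_{\ge 0}.
\]
For $m,n>0$ the left-hand side is the action of the torus-invariant Reeb orbit $\gamma_{m,n}$ lying over the point of $\partial_+\Omega$ with outward normal $(m,n)$; as $(m,n)\to(1,0)$ and $(m,n)\to(0,1)$ this orbit degenerates to $e_{1,0}$ and $e_{0,1}$, with actions $a(\Omega)$ and $b(\Omega)$. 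Thus the two anchored orbits already force $a(\Omega)\le a(\Omega')$ and $b(\Omega)\le b(\Omega')$; the content of the theorem is the inequality for all intermediate directions, which (unlike the ellipsoid case, where it follows from the two extreme ones) is genuinely new when $\partial_+\Omega$ is curved.

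Next I would set up the geometry. Writing $W=X_{\Omega'}\setminus\varphi(\op{int}(X_\Omega))$ for the symplectic cobordism from $\partial X_\Omega$ to $\partial X_{\Omega'}$ determined by $\varphi$, I complete $W$ to $\widehat W$ by attaching cylindrical ends and choose a cobordism-admissible almost complex structure $J$ for which the two anchors are $J$-holomorphic; this is possible because $\Sigma_1,\Sigma_2$ are symplectic and transverse to the boundary, as required in Definition~\ref{def:2ase}. In $\widehat W$ the anchors become disjoint $J$-holomorphic curves $\widehat\Sigma_1,\widehat\Sigma_2$ asymptotic to $e_{1,0}$ and $e_{0,1}$ at both the positive end $\partial X_{\Omega'}$ and the negative end $\partial X_\Omega$. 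Fixing a rational direction $(m,n)$ with $m,n>0$, I would then produce a $J$-holomorphic curve $C$ in $\widehat W$ with a positive end asymptotic to $\gamma_{m,n}(\Omega')$; the existence of such a curve comes from the ECH cobordism map together with nonvanishing of the relevant ECH class for the concave boundary (the same input that proves monotonicity of ECH capacities), and by SFT compactness its negative ends lie on Reeb orbits of $\partial X_\Omega$.

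The key step is to show that these negative ends carry total action at least $A(\gamma_{m,n}(\Omega))$, so that, since $J$-holomorphic curves lose action as one descends, the action inequality
\[
A(\gamma_{m,n}(\Omega))\ \le\ A(\gamma_{m,n}(\Omega'))
\]
follows. Here I would use the two holomorphic anchors as ``coordinate axes'': by positivity of intersection, $C$ meets $\widehat\Sigma_1$ and $\widehat\Sigma_2$ nonnegatively, and feeding these intersection numbers into the relative adjunction formula and the writhe bound underlying the ECH index inequality pins down the negative asymptotics of $C$ to be $\gamma_{m,n}(\Omega)$, or at least an orbit set of no smaller action. Combining the resulting inequalities over a dense set of directions $(m,n)$ with the reduction above then yields $\Omega\subset\Omega'$.

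I expect the main obstacle to be precisely this asymptotic control: guaranteeing that a curve $C$ with the prescribed top end exists and, more delicately, that intersection positivity with the two anchors genuinely forces its bottom ends to carry the full action $A(\gamma_{m,n}(\Omega))$ rather than splitting off into lower-action orbits. The bookkeeping here differs from the convex case of Theorem~\ref{thm:convex2}: for concave domains the nontrivial contact boundary sits at the \emph{negative} end of $W$, which reverses the roles of the two ends in the cobordism map and flips signs in the index computations, so the combinatorial input (the dictionary above and the behaviour of the ECH index of $\gamma_{m,n}$) must be re-derived for concave normal directions rather than quoted from the convex argument.
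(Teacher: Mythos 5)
Your overall skeleton---reduce $\Omega\subset\Omega'$ to the family of action inequalities $\mathcal{A}_\Omega(e_{a,b})\le\mathcal{A}_{\Omega'}(e_{a,b})$ via support functions of the convex complements, make the two anchors $J$-holomorphic in the completed cobordism, and combine intersection positivity with the $J_0$/adjunction bound to pin down the ends of a holomorphic current supplied by the ECH cobordism map---is exactly the paper's strategy (Lemma~\ref{lem:Aabconcave} and \S\ref{sec:concaveproofs}). The gap is in the key step, where you transplant the convex-case logic without the reversal that concavity forces. You propose to ``produce a $J$-holomorphic curve $C$ with a positive end asymptotic to $e_{a,b}(\Omega')$'' from ``nonvanishing of the relevant ECH class,'' and then to pin down its negative ends. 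In the convex case this works because $e_{a,b}$ is action-\emph{minimal} among generators of its ECH index, hence is itself a cycle representing a nonzero class (Proposition~\ref{prop:eab}), and one computes $\phi(e_{a,b})$ directly. For concave domains $e_{a,b}$ is action-\emph{maximal} in its index class: it need not be a cycle on its own, and nonvanishing of the ECH class only produces a current emanating from \emph{some} summand of \emph{some} representing cycle upstairs---not one with positive end $e_{a,b}(\Omega')$. The correct dual statement (Proposition~\ref{prop:eabconcave}) is that $e_{a,b}$ must appear as a summand of every cycle representing the nonzero class, and the useful instance of this is in the \emph{negative} boundary $\partial X_\Omega$: it identifies the \emph{bottom} end of a current a priori, after which Lemma~\ref{lem:beyondconcave} pins down the \emph{top} end, yielding $\langle\phi(e_{a,b}),e_{a,b}\rangle\neq 0$ (Lemma~\ref{lem:phiabconcave}). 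Your proposal never identifies this bottom-up structure, and the existence step as you state it does not go through.

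The fallback in your key step, ``or at least an orbit set of no smaller action,'' is precisely where the wrong orientation bites. By maximality, every other concave generator of the same index in $\partial X_\Omega$ has \emph{strictly smaller} action than $e_{a,b}$, so a current from $e_{a,b}(\Omega')$ down to such a $\Lambda$ only gives $\mathcal{A}_{\Omega'}(e_{a,b})\ge\mathcal{A}_\Omega(\Lambda)$, which is weaker than what you need; the negative asymptotics must be identified \emph{exactly}, with no slack. The computation that achieves this exact identification (the combination of \eqref{eqn:useJ0boundconcave}, \eqref{eqn:useJ0formulaconcave}, and the linking inequalities) is arranged so that the single-edge generator sits at the negative end of the current and the unknown generator at the positive end; running it in your direction changes the sign of the $e(\Lambda)-1$ term and is not the same argument. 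To repair the proof you would need to replace your existence-and-pinning step by the maximality statement for representing cycles in the target and the bottom-to-top pinning lemma, i.e.\ essentially Proposition~\ref{prop:eabconcave} and Lemma~\ref{lem:beyondconcave}.
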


Note that Theorems~\ref{thm:convex2} and \ref{thm:concave2} are essentially sharp; in each theorem the $2$-anchored symplectic embedding in question exists if $\Omega\subset\op{int}(\Omega')$, similarly to Remark~\ref{rem:inclusion}.

%%%%%%%%%%%%%%%%%%%%%%%%%%%%%%%%%%%%

\subsection{More about anchors}

To get a better understanding of the difference between a symplectic embedding and an anchored symplectic embedding, fix a symplectic embedding $\varphi:(X,\omega)\to(\op{int}(X'),\omega')$ and knots $\gamma\subset\partial X$ and $\gamma'\subset\partial X'$. When can the symplectic embedding $\varphi$ be upgraded to an anchored symplectic embedding? That is, when does there exist an embedded symplectic surface $\Sigma\subset W=X' \setminus \varphi(\op{int}(X)$ satisfying the requirements in Definition~\ref{def:ase}?

There are three basic necessary conditions. To describe them, let $H_2(W,\gamma',\varphi(\gamma))$ denote the set of 2-chains $Z$ in $W$ with $\partial X = \gamma' - \varphi(\gamma)$, modulo boundaries of 3-chains. This is an affine space over $H_2(W)$. An anchor $\Sigma$ as above determines a ``relative homology class'' $Z=[\Sigma]\in H_2(W,\gamma',\varphi(\gamma))$. Since $\Sigma$ is symplectic, we must have $\int_{Z}\omega' > 0$. In addition, the relative adjunction formula (see e.g.\ \cite[\S4.4]{Hutchings09}) determines the genus of $\Sigma$ in terms of the relative homology class $Z$, and this genus must be nonnegative. And, of course, there must exist a smoothly embedded surface in the relative homology class $Z$ of the correct genus.

However, the existence of a relative homology class $Z$ satisfying these conditions is not sufficient, as shown by the following simple example.

\begin{thm}
\label{thm:noanchor}
(proved in \S\ref{sec:convexproofs})
Let $X_\Omega, X_{\Omega'} \subset \R^4$ be convex toric domains with $\Omega\subset \op{int}(\Omega')$. Then the inclusion map $\imath:X_\Omega \to X_{\Omega'}$ can be upgraded to an anchored symplectic embedding
\[
(\imath, \Sigma) : (X_\Omega, e_{1,0}) \longrightarrow (X_{\Omega'}, e_{0,1}),
\]
if and only if
\begin{equation}
\label{eqn:bxy}
b(\Omega') > x_0 + y_0,
\end{equation}
where $(x_0,y_0)\in\partial\Omega$ is a point where the tangent line to $\partial\Omega$ has slope $-1$.
\end{thm}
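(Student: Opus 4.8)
The plan is to prove the two directions separately, treating sufficiency by an explicit toric construction and necessity by a positivity/ECH obstruction. Throughout I write $\mu=(\mu_1,\mu_2)$ for the moment coordinates, $a=a(\Omega)$, $b'=b(\Omega')$, and I record two facts that organize the argument. First, by Stokes' theorem applied to the standard Liouville primitive on $\C^2$, any surface with $\partial=e_{0,1}(X_{\Omega'})-e_{1,0}(X_\Omega)$ has symplectic area exactly $b'-a$. Second, since the supporting line of $\Omega$ in the direction $(1,1)$ is $\{\mu_1+\mu_2=x_0+y_0\}$, the ball $B^4(x_0+y_0)$ is the smallest ball containing $X_\Omega$, and the closed characteristic of $\partial X_\Omega$ in the torus over $(x_0,y_0)$ is the $(1,1)$--orbit, whose action equals $x_0+y_0$.

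For sufficiency, assume $b'>x_0+y_0$. I would build the anchor as a ``toric annulus'': choose an embedded path $\phi\mapsto\mu(\phi)=(\mu_1(\phi),\mu_2(\phi))$, $\phi\in[0,1]$, in $\Omega'\setminus\op{int}(\Omega)$ from $(a,0)$ to $(0,b')$, and set
\[
\Sigma=\left\{\left(\rho_1(\phi)e^{i\theta},\rho_2(\phi)e^{i\theta}\right)\ \middle|\ \phi\in[0,1],\ \theta\in\R/2\pi\Z\right\},\qquad \rho_j=\sqrt{\mu_j/\pi}.
\]
A direct computation gives $\omega'|_\Sigma=\tfrac1{2\pi}\tfrac{d}{d\phi}(\mu_1+\mu_2)\,d\phi\wedge d\theta$, so $\Sigma$ is symplectic precisely when $\mu_1+\mu_2$ is strictly increasing along the path; then its area is $b'-a$ and its boundary circles are $e_{1,0}(X_\Omega)$ at $\phi=0$ and $e_{0,1}(X_{\Omega'})$ at $\phi=1$. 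The remaining point is to produce such a strictly $(\mu_1+\mu_2)$--increasing path avoiding $\op{int}(\Omega)$: run just outside $\partial_+\Omega$ from $(a,0)$, increasing $\mu_1+\mu_2$ until the path crosses the supporting line $\{\mu_1+\mu_2=x_0+y_0\}$, and then proceed by a segment to $(0,b')$. Convexity of $\Omega$ makes the first leg increasing and keeps it outside $\op{int}(\Omega)$, convexity of $\Omega'$ keeps the whole path in $\Omega'$, and the second leg is increasing exactly because $b'>x_0+y_0$. (Here one uses $\Omega\subset\op{int}(\Omega')$ so the pushed--off path stays in $\Omega'$; the boundary case is handled by a limiting argument.)

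For necessity, suppose an anchor $\Sigma$ exists; I must show $b'>x_0+y_0$. The idea is to cap off the inner boundary with the holomorphic disk $D_1=\{z_2=0,\ \mu_1\le a\}\subset X_\Omega$, obtaining an embedded symplectic disk $\hat\Sigma=\Sigma\cup D_1\subset X_{\Omega'}$ with $\partial\hat\Sigma=e_{0,1}(X_{\Omega'})$, of area $b'$, which passes through the origin and meets $X_\Omega$ only along $D_1$. Choosing an $\omega'$--compatible $J$ making $\hat\Sigma$ holomorphic and comparing $\hat\Sigma$ against the holomorphic disks of the osculating ball $B^4(x_0+y_0)$ --- equivalently, testing $\hat\Sigma$ against the $(1,1)$--orbit of action $x_0+y_0$ by a neck--stretching/ECH action--filtration argument along $\partial X_\Omega$ --- should force the action $b'$ of the positive asymptotic orbit $e_{0,1}(X_{\Omega'})$ to strictly exceed $x_0+y_0$.

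I expect the necessity direction, and specifically the extraction of the sharp constant $x_0+y_0$, to be the main obstacle. The homological data are insufficient: the relative class is unique and records only the area $b'-a$, yielding the weak bound $b'>a$, and the connectedness of the moment image of an arbitrary anchor does \emph{not} force monotonicity of $\mu_1+\mu_2$, so the true bound is genuinely symplectic rather than topological. The delicate points are that one cannot simultaneously make $\hat\Sigma$ and the competing complex lines $J$--holomorphic for a single $J$, so positivity of intersection is not automatic, and that soft tools such as the monotonicity lemma yield only a non--sharp constant. Pinning down exactly $x_0+y_0$ is what requires the ECH input.
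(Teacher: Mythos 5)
Your ``if'' direction is essentially the paper's own construction (Remark~\ref{rem:eta}): the lift of a path in $\Omega'\setminus\op{int}(\Omega)$ to an annulus meeting each torus fiber in a $(1,1)$-geodesic, which is symplectic exactly when $\mu_1+\mu_2$ is strictly monotone along the path, with the path assembled from a leg along $\partial_+\Omega$ up to the slope $-1$ supporting point followed by a straight segment to $(0,b(\Omega'))$. That half is correct and matches the paper.

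The ``only if'' direction contains a genuine gap. You correctly identify both the sharp constant (the action $x_0+y_0$ of the $(1,1)$-orbit of $\partial X_\Omega$) and the central difficulty (one cannot make the anchor and the competing axis disk $J$-holomorphic for the same $J$, so intersection positivity is unavailable), but the proposed resolution --- cap off with $D_1$ and run ``a neck-stretching/ECH action-filtration argument'' --- is a statement of hope rather than a proof, and as described it does not work: neck stretching requires the curve to be holomorphic for every member of the stretching family of almost complex structures, whereas your $\hat\Sigma$ is holomorphic for only one $J$; deforming $J$ is precisely where you lose control of it. What the paper does instead, and what is missing from your proposal, is a bifurcation argument in a one-parameter family of almost complex structures on the completed cobordism $\overline{W}$ between $L$-nice perturbations: (i) complete the anchor to a curve $C_1\in\mathcal{M}^{J_1}(e_{0,1},e_{1,0})$, which is a cylinder by the relative adjunction formula and hence satisfies automatic transversality; (ii) for a second almost complex structure $J_2$ making the axis surface $C_2\in\mathcal{M}^{J_2}(e_{0,1},e_{0,1})$ holomorphic, intersection positivity with $C_2$ together with the asymptotic winding bounds (Lemma~\ref{lem:convexlinking}(b)) forces $\mathcal{M}^{J_2}(e_{0,1},e_{1,0})=\emptyset$; (iii) along a generic path $\{J_\tau\}$ from $J_1$ to $J_2$ the cylinder can only disappear by breaking, and index and transversality bookkeeping shows the only possible limit contains a curve in $\mathcal{M}^{J_\tau}(e_{0,1},h_{1,1})$; (iv) the action filtration then gives $b(\Omega')=\mathcal{A}_{\Omega'}(e_{0,1})\ge\mathcal{A}_{\Omega}(h_{1,1})=\|(1,1)\|_\Omega^*=x_0+y_0$, with the strict inequality recovered by shrinking the target slightly at the outset. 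Step (iii) --- producing a holomorphic curve asymptotic to $h_{1,1}$ by forcing a moduli space to die in a family --- is the essential idea your sketch lacks.
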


\begin{remark}
\label{rem:eta}
The ``if'' part of Theorem~\ref{thm:noanchor} is proved as follows. Let $\eta$ be a smooth path in $\Omega' \setminus\op{int}(\Omega)$ which starts at $(0,b(\Omega'))$, where it is transverse to $\partial_+\Omega'$, and ends at $(a(\Omega),0)$, where it is transverse to $\partial_+\Omega$. The path $\eta$ lifts to an embedded cylinder $\Sigma\subset X_{\Omega'} \setminus \op{int}(X_\Omega)$ with $\partial\Sigma = e_{0,1}(\Omega') - e_{1,0}(\Omega)$, such that for each point $(x,y)$ in the interior of $\eta$, the intersection of $\Sigma$ with the 2-torus $\mu^{-1}(x,y)$ is a geodesic in the homology class $(1,1)$. The cylinder $\Sigma$ is symplectic with the correct orientation if and only if the function $x+y$ has negative derivative along the path $\eta$. (If $x+y$ is constant along the path, then the lifted cylinder is Lagrangian.) The existence of a path $\eta$ as above along which the function $x+y$ has negative derivative is equivalent to the inequality \eqref{eqn:bxy}. See Figure \ref{fig:eta}.
\end{remark}

\begin{figure}
\centering
\begin{overpic}[scale=0.7]{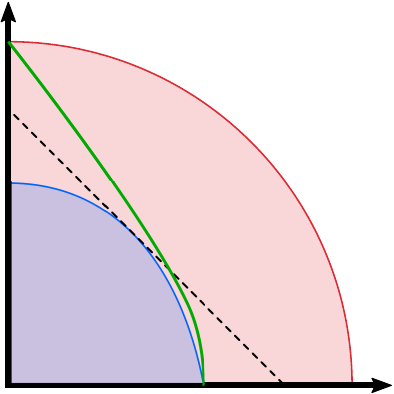}
\put(1.7,1.7){$\Omega$}
\put(4.5,4.5){$\Omega'$}
\put(2,6.5){\color{Green}\Large$\eta$}
\put(9.8,0){$\pi r_1^2$}
\put(-.4,10){$\pi r_2^2$}
        \end{overpic}
\caption{An example of regions $\Omega$ and $\Omega'$ (in blue and red, respectively) illustrating Remark \ref{rem:eta}. The tangent line relevant to (\ref{eqn:bxy}) is the dotted black line, and intersects the $y$-axis below $b(\Omega')$; a possible path $\eta$ of slope less than $-1$ is in green.}
\label{fig:eta}
\end{figure}

\begin{remark}
\label{rem:hprinciple}
In Remark~\ref{rem:eta}, suppose that $x+y$ does not have negative derivative along the path $\eta$, so that the cylinder $\Sigma$ is not symplectic. If we assume that $b(\Omega') > a(\Omega)$ (this is a weaker condition than the inequality \eqref{eqn:bxy}), then one can use h-principle arguments to show that $\Sigma$ has a $C^0$-small regular homotopy rel boundary to an immersed symplectic cylinder in $X_{\Omega'} \setminus \op{int}(X_\Omega)$. However Theorem~\ref{thm:noanchor} tells us that if condition \eqref{eqn:bxy} does not hold, then the self-intersections of this cylinder cannot be cancelled within $X_{\Omega'}\setminus X_{\Omega}$ while keeping the cylinder symplectic (although there does exist an embedded symplectic cylinder in $X_{\Omega'}$).
\end{remark}

%%%%%%%%%%%%%%%%%%%%%%%%%%%%%%%%%%%%%%%%

\subsection{Idea of the proofs}

We will prove the main results in \S\ref{sec:proofs}, after assembling necessary background in \S\ref{sec:ech}. The basic principle is as follows. Let $(X,\omega)$ and $(X',\omega')$ be smooth star-shaped domains in $\R^4$. Given a symplectic embedding $\varphi: (X,\omega) \to (\op{int}(X'),\omega')$, we obtain a symplectic cobordism $W=X'\setminus\varphi(\op{int}(X))$. There is an associated cobordism map on embedded contact homology (ECH),
\begin{equation}
\label{eqn:PhiW}
\Phi = \Phi(W): ECH(\partial X') \longrightarrow ECH(\partial X),
\end{equation}
defined in \cite{HutchingsTaubes13}, which in this case is an isomorphism. For a suitable almost complex structure $J$ on the ``symplectic completion'' $\overline{W}$ of $W$, one can find a chain map $\phi$ inducing $\Phi$, such that whenever a coefficient of $\phi$ is nonzero, there is a corresponding ``broken $J$-holomorphic current'' in $\overline{W}$. Since any holomorphic curve has positive symplectic area, the existence of such a broken holomorphic current leads to an inequality involving the symplectic actions of the Reeb orbits to which its ends are asymptotic.

Now given an anchor $\Sigma$, after straightening $\Sigma$ near the boundary and ``completing'' it to an appropriate surface $\overline{\Sigma}$, we can choose $J$ such that $\overline{\Sigma}$ is holomorphic. Any other holomorphic curve in $\overline{W}$ must have positive intersections with $\overline{\Sigma}$. Combined with intersection theory developed in \cite{Hutchings16}, this leads to restrictions on which components of the chain map $\phi$ can be nonzero. (Compare \cite[Rem.\ 7.1]{calabi}.) These restrictions sharpen the inequalities described in the previous paragraph. 

The proof of Theorem \ref{thm:polydiskball} follows this strategy, studying how the chain map $\phi$ acts on the degree 4 part of the ECH chain complex. The proofs of Theorems~\ref{thm:convex1} and \ref{thm:convex2} for convex toric domains are more involved and require considering the ECH chain complex in arbitrarily high degrees. We do not need to understand the full chain complex, which is quite complicated (see \cite{Choi}), but we do prove a useful fact (Proposition~\ref{prop:eab}) asserting that certain chain complex generators are cycles representing nonzero classes in ECH. The hypotheses of these theorems, together with intersection theory as above, imply that the chain map $\phi$ preserves these generators. Then the fact that holomorphic curves have positive area translates into statements about the geometry of the toric domains, from which we conclude the inclusion statement. Theorem~\ref{thm:concave2}, for concave toric domains, is proved by a ``dual'' argument to the proof of Theorem~\ref{thm:convex2}. The proof of Theorem~\ref{thm:noanchor} is a variant of these arguments, roughly speaking studying a chain homotopy of ECH cobordism maps induced by a one-parameter family of almost complex structures.

We remark that instead of using ECH cobordism maps, we could instead find the holomorphic curves in $\overline{W}$ that we need using more ``elementary'' arguments; see \cite[Rem.\ 11]{altech}.

\paragraph{Acknowledgments.} M.H. was partially supported by NSF grant DMS-2005437. A.R. was partially supported by NSF grants DMS-2203312 and DMS-1907654. M.W. was partially supported by NSF grant DMS-2103245. Y.Y. was partially supported by ERC Starting Grant No. 851701. A.R. and Y.Y. are grateful to the Kylerec 2022 workshop. M.W. and Y.Y. also thank the 2023 Cornell Topology Festival. We further thank Ko Honda, Cagatay Kutluhan, Steven Sivek, John Etnyre, and Jen Hom for helpful conversations. We thank the anonymous referee for carefully reading the manuscript and pointing out a number of valuable corrections.

\section{ECH of perturbations of convex and concave toric domains}
\label{sec:ech}

In this section we prepare for the proofs of the main theorems.
In \S\ref{ss:ECHrev} we review what we need to know about embedded contact homology (ECH). In \S\ref{ss:TDrev} and \ref{ss:cECC} we combinatorially describe the ECH chain complex (not including the differential) for certain ``nice'' perturbations of convex and concave toric domains, along with additional information about the chain complex generators such as symplectic action and linking numbers. Finally, in \S\ref{ss:cob} and \S\ref{sec:cobordismspecial} we introduce what we need to know about ECH cobordism maps.

%%%%%%%%%%%%%%%%%%%%%%%%%%%%%%%%%%%%%%%%%%%%%%%%%%%%%

\subsection{Embedded contact homology}
\label{ss:ECHrev}

Let $Y$ be a closed oriented three-manifold (not necessarily connected). Let $\lambda$ be a contact form on $Y$, and assume that $\lambda$ is nondegenerate (see below). To simplify the discussion\footnote{See \cite{Hutchingslec} for the definition of ECH without the homological assumption \eqref{eqn:homologysphere}.}, assume that
\begin{equation}
\label{eqn:homologysphere}
H_1(Y)=H_2(Y)=0.
\end{equation}
(In the cases relevant to the proofs of the main results, $Y$ will be the boundary of a smooth toric domain and thus diffeomorphic to $S^3$.) We now review how to define the embedded contact homology of $(Y,\lambda)$ with $\Z/2$ coefficients\footnote{It is also possible to define ECH with $\Z$ coefficients \cite[\S9]{HutchingsTaubes09}.}, which we denote by $ECH_*(Y,\lambda)$. This is a $\Z$-graded $\Z/2$-module (the definition of the $\Z$-grading will use the homological assumption \eqref{eqn:homologysphere}).

\paragraph{Contact geometry.}
The contact form $\lambda$ determines the contact structure $\xi=\ker\lambda$, as well as the Reeb vector field $R$ characterized by
\[
d\lambda(R,\cdot)=0, \quad \quad \lambda(R)=1.
\]
A \textit{Reeb orbit} is a smooth map
\begin{equation}
\label{eqn:Reeborbit}
\gamma:\R/T\Z\longrightarrow Y \quad \mbox{ with } \quad \gamma'(t)=R(\gamma(t)).
\end{equation}
We declare two Reeb orbits to be equivalent if they differ by reparametrization of the domain. We say that the Reeb orbit $\gamma$ is {\em simple\/} if the map $\gamma$ is an embedding.

Let $\{\psi_t:Y\circlearrowleft\}_{t\in\R}$ denote the flow of the Reeb vector field $R$. Let $\gamma$ be a Reeb orbit as above. The derivative of the time $t$ Reeb flow restricts to a symplectic linear map
\begin{equation}
\label{eqn:dpsi}
d\psi_t:(\xi_{\gamma(0)},d\lambda) \longrightarrow (\xi_{\gamma(t)},d\lambda),
\end{equation}
and $d\psi_T$ is called the \textit{linearized return map} of $\gamma$. We say that $\gamma$ is {\em nondegenerate\/} if $1$ is not an eigenvalue of $d\psi_T$. In this case we say that $\gamma$ is {\em elliptic\/} if the eigenvalues of $d\psi_T$ are on the unit circle, {\em positive hyperbolic\/} if the eigenvalues of $d\psi_T$ are positive, and {\em negative hyperbolic\/} if the eigenvalues of $d\psi_T$ are negative. We say that the contact form $\lambda$ is {\em nondegenerate\/} if all Reeb orbits are nondegenerate, and we assume below that this is the case.

If $\gamma$ is a Reeb orbit $\gamma$, its {\em symplectic action\/} $\mathcal{A}(\gamma)$ is the integral of the contact form
\[
\mathcal{A}(\gamma) = \int_{\gamma}\lambda,
\]
or equivalently the period $T$ in \eqref{eqn:Reeborbit}.

\paragraph{The chain module.}

\begin{defn}
An \textit{orbit set} is a finite set of pairs $\alpha=\{(\alpha_i,m_i)\}$, where the $\alpha_i$ are distinct simple Reeb orbits and $m_i\in\Z_{>0}$. An {\em ECH generator\/} is an orbit set as above such that $m_i=1$ whenever $\alpha_i$ is hyperbolic.
\end{defn}

We sometimes use the multiplicative notation
\[
\prod_i\alpha_i^{m_i}  \longleftrightarrow  \{(\alpha_i,m_i)\}.
\]
We define $ECC_*(Y,\lambda)$ to be the free $\Z/2$-module generated by the ECH generators.

The module $ECC_*(Y,\lambda)$ has a $\Z$-grading by the ECH index, which is defined as follows.

\begin{defn}
If $\alpha$ is an ECH generator (or more generally an orbit set), its {\em ECH index\/} is defined by
\[
I(\alpha) = c_\tau(\alpha)+Q_\tau(\alpha)+\op{CZ}_\tau^I(\alpha)\in\Z
\]
where 
\begin{itemize}
\item
$\tau$ is a symplectic trivialization of $\xi$ over each of the simple orbits $\alpha_i$ in $\alpha$.
\item
$c_\tau$ is the \textit{relative first Chern number}, defined as follows. For each $i$, since $H_1(Y)=0$, we can choose a compact oriented surface $\Sigma_i$ in $Y$ with $\partial\Sigma_i=\alpha_i$. Then $c_\tau(\alpha_i)=c_1(\xi|_{\Sigma_i},\tau)$, where the right hand side is the signed count of zeroes of a generic section of $\xi|_{\Sigma_i}$ which over $\alpha_i=\partial\Sigma_i$ is constant and nonvanishing with respect to $\tau$. This count does not depend on the choice of $\Sigma_i$ since $H_2(Y)=0$. Finally,
\begin{equation}
\label{eqn:ctaulinear}
c_\tau(\alpha) = \sum_i m_i c_\tau(\alpha_i)\in\Z.
\end{equation}
\item
$Q_\tau$ is the \textit{relative intersection pairing} defined by
\begin{equation}
\label{eqn:Qtaubilinear}
Q_\tau(\alpha) = \sum_i m_i^2 Q_\tau(\alpha_i)+\sum_{i\neq j} m_im_j \ell(\alpha_i,\alpha_j)\in\Z.
\end{equation}
Here if $i\neq j$ then $\ell(\alpha_i,\alpha_j)$ denotes the linking number of $\alpha_i$ with $\alpha_j$, while $Q_\tau(\alpha_i)$ denotes the linking number of $\alpha_i$ with a pushoff of $\alpha_i$ via the framing $\tau$.
\item
We define
\[
\op{CZ}_\tau^I(\alpha) = \sum_i\sum_{k=1}^{m_i}\op{CZ}_\tau(\alpha_i^k)\in\Z.
\]
Here $\alpha_i^k$ denotes the $k^{th}$ iterate of $\alpha_i$, and $\op{CZ}_\tau$ denotes the Conley-Zehnder index; see e.g.\ the review in \cite[\S3.2]{Hutchingslec}.
\end{itemize}
\end{defn}

The ECH index does not depend on the choice of $\tau$, even though the individual terms in it do; see e.g.\ \cite[\S2.8]{Hutchings09}. 

\paragraph{The differential.}

\begin{defn}
\label{def:lambdacompatible}
An almost complex structure $J$ on $\R\times Y$ is {\em $\lambda$-compatible\/} if:
\begin{itemize}
\item
$J\partial_s=R$, where $s$ denotes the $\R$ coordinate on $\R\times Y$;
\item
$J$ maps the contact structure $\xi$ to itself, rotating positively with respect to $d\lambda$; and
\item
$J$ is invariant under translation of the $\R$ factor on $\R\times Y$.
\end{itemize}
\end{defn}

Fix a $\lambda$-compatible almost complex structure $J$. We consider $J$-holomorphic curves
\[
u:(C,j) \longrightarrow (\R\times Y,J)
\]
where the domain is a punctured compact Riemann surface. We assume that for each puncture, there exists a Reeb orbit $\gamma$, such that in a neighborhood of the puncture, $u$ is asymptotic to $\R\times\gamma$ as either $s\to +\infty$ (in which case we say that this is a ``positive puncture'') or $s\to-\infty$ (a ``negative puncture''). If $u$ is somewhere injective, then $u$ is determined by its image in $\R\times Y$, which by abuse of notation we still denote by $C$.

\begin{defn}
A {\em $J$-holomorphic current\/} is a finite sum $\mathcal{C}=\sum_kd_kC_k$ where the $C_k$ are distinct somewhere injective $J$-holomorphic curves as above, and the $d_k$ are positive integers. If $\alpha$ and $\beta$ are orbit sets, we define $\mathcal{M}^J(\alpha,\beta)$ to be the set of $J$-holomorphic currents $\mathcal{C}$ such that $\lim_{s\to+\infty}(\mathcal{C}\cap(\{s\}\times Y))=\alpha$ and $\lim_{s\to-\infty}(\mathcal{C}\cap(\{s\}\times Y))=\beta$ as currents\footnote{This means, for example, that if the pair $(\alpha_i,m_i)$ appears in $\alpha$, then some of the curves $C_k$ have positive punctures asymptotic to covers of $\alpha_i$, say of multiplicities $q_{i,k,l}$, and we have $\sum_kd_k\sum_lq_{i,k,l}=m_i$.}.
\end{defn}

Since $J$ is $\R$-invariant, $\R$ acts on $\mathcal{M}^J(\alpha,\beta)$ by translation of the $\R$ coordinate on $\R\times Y$. By \cite[Prop.\ 3.7]{Hutchingslec}, if $J$ is generic and $I(\alpha)-I(\beta)=1$, then $\mathcal{M}^J(\alpha,\beta)/\R$ is a finite set; moreover, for a given ECH generator $\alpha$, there are only finitely many ECH generators $\beta$ with $I(\alpha)-I(\beta)=1$ for which $\mathcal{M}^J(\alpha,\beta)$ is nonempty. We can then define the differential
\[
\partial_J: ECC_*(Y,\lambda) \longrightarrow ECC_{*-1}(Y,\lambda)
\]
as follows. If $\alpha$ is an ECH generator, then
\[
\partial_J\alpha = \sum_{I(\alpha)-I(\beta)=1} \#_{\mathbb{Z}_2}\frac{\mathcal{M}^J(\alpha,\beta)}{\R}\cdot\beta.
\]
Here $\beta$ is an ECH generator, and $\#_{\mathbb{Z}_2}$ denotes the mod 2 count.

It is shown in \cite{obg1} that $\partial_J^2=0$. We denote the homology of the chain complex $(ECC_*(Y,\lambda),\partial_J)$ by $ECH_*(Y,\lambda)$ or $ECH_*(Y,\xi)$. It is shown in \cite{Taubes10} that this homology is canonically isomorphic to a version of Seiberg-Witten Floer cohomology depending only on $Y$ and $\xi$ and not on $\lambda$ or $J$.

\paragraph{ECH of $S^3$.} As explained in \cite[\S3.7]{Hutchingslec}, we have
\[
ECH_*(S^3,\xi_{\op{std}},J)=\begin{cases}\Z_2&\text{ if }*=0,2,4,\dots\\0&\text{ else.}
\end{cases}
\]
Here $\xi_{\op{std}}$ denotes the standard tight contact structure on $S^3$.

\paragraph{Action filtration.}
Let $\alpha=\{\alpha_i,m_i\}$ denote an orbit set. We define its {\em symplectic action\/} to be
\[
\mathcal{A}(\mathcal{\alpha}) = \sum_i m_i \int_{\alpha_i} \lambda = \sum_im_i\A(\alpha_i).
\]
Given $L\in\R$, let $ECC_*^L(Y,\lambda)$ denote the subset of $ECC(Y,\lambda)$ spanned by ECH generators $\alpha$ with $\mathcal{A}(\alpha) < L$. If $J$ is a $\lambda$-compatible almost complex structure, then it follows from the second bullet in Definition~\ref{def:lambdacompatible} that if $\mathcal{M}^J(\alpha,\beta)\neq\emptyset$ then $\mathcal{A}(\alpha)\ge \mathcal{A}(\beta)$. Consequently $(ECC^L_*(Y,\lambda),\partial_J)$ is a subcomplex of $(ECC_*(Y,\lambda),\partial_J)$. The homology of this subcomplex is the {\em filtered ECH} which we denote by $ECH^L_*(Y,\lambda)$. It is shown in \cite[Thm.\ 1.3]{HutchingsTaubes13} that filtered ECH does not depend on $J$ (although it does depend on $\lambda$), and furthermore the maps
\[
ECH^L(Y,\lambda) \longrightarrow ECH^{L'}(Y,\lambda)
\]
for $L<L'$ and
\[
ECH^L(Y,\lambda) \longrightarrow ECH(Y,\xi)
\]
induced by inclusion of chain complexes are also independent of $J$.

\paragraph{$J_0$ index}

There is an important variant of the ECH index $I$, denoted by $J_0$ (not an almost complex structure). If $\alpha=\{(\alpha_i,m_i)\}$ is an ECH generator, we define
\[
J_0(\alpha) = I(\alpha) - 2c_\tau(\alpha) - \sum_i\op{CZ}_\tau(\alpha_i^{m_i}) \in \Z.
\]
According to \cite[Prop.\ 3.2]{Hutchings16}, the $J_0$ index bounds topological complexity of holomorphic curves as follows. Let $J$ be a $\lambda$-compatible almost complex structure, let $\alpha=\{(\alpha_i,m_i)\}$ and $\beta=\{(\beta_j,n_j)\}$ be ECH generators, and let $C\in\mathcal{M}^J(\alpha,\beta)$ be somewhere injective with connected domain. Then
\begin{equation}
\label{eqn:J0bound}
2g(C) - 2 + \sum_i\left(2n_i^+-1\right) + \sum_j\left(2n_j^--1\right) \le J_0(\alpha) - J_0(\beta).
\end{equation}
Here $g(C)$ denotes the genus of $C$, while $n_i^+$ denotes the number of positive punctures of $C$ asymptotic to covers of $\alpha_i$, and $n_j^-$ denotes the number of negative punctures of $C$ asymptotic to covers of $\beta_j$.

%%%%%%%%%%%%%%%%%%%%%%%%%%%%%%%%%%%%%%%%%%%%%%%%%%%%%%%%%%%%%%%%

\subsection{Reeb dynamics on the boundary of a toric domain}
\label{ss:TDrev}

We now discuss the Reeb dynamics on the boundary of a toric domain. The following is a consolidation and review of material from \cite[\S3.2]{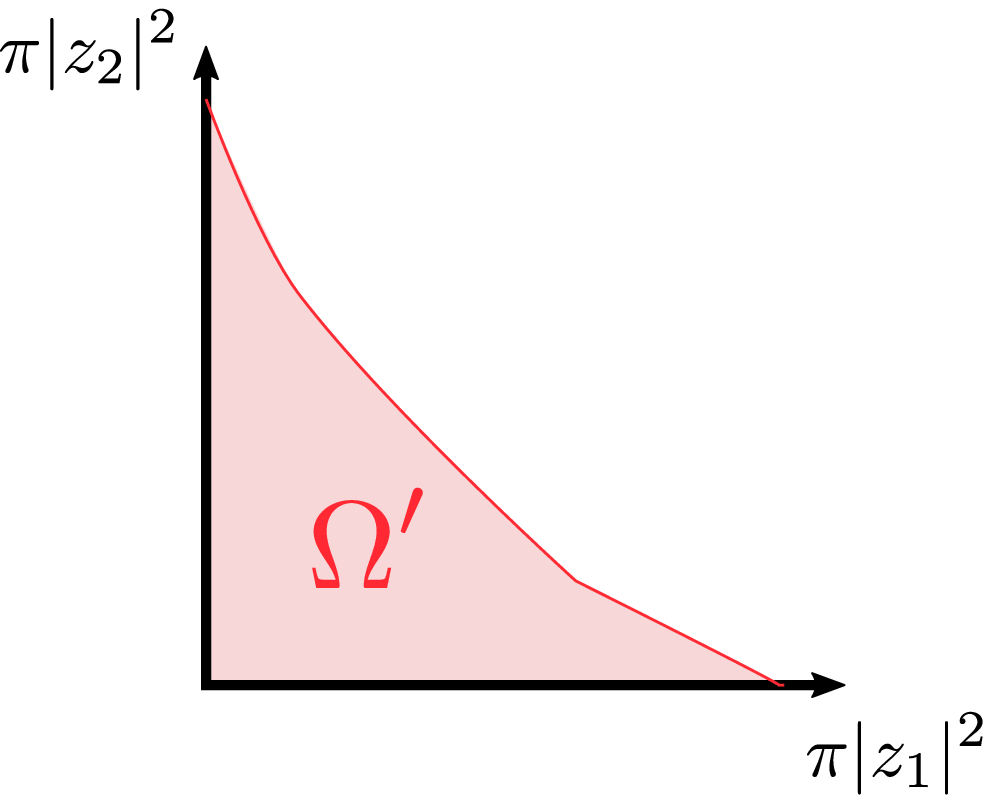} which discusses concave toric domains and \cite[\S5]{Hutchings16} which discusses convex toric domains, with updated notational conventions from \cite{gh}.

Let $X_\Omega\subset\R^4$ be a smooth toric domain as in Definition~\ref{def:toricdomain}, and assume that $\partial_+\Omega$ is transverse to the radial vector field on $\R^2$ (which holds for example for convex and concave toric domains). Then $\partial X_\Omega$ is a star-shaped hypersurface in $\R^4$. As such, the standard Liouville form
\begin{equation}
\label{eqn:lambda0}
\lambda_0 = \frac{1}{2}\sum_{i=1}^2\left(x_i\,dy_i - y_i\,dx_i\right)
\end{equation}
restricts to a contact form on $\partial X_\Omega$.

As in Remark~\ref{rem:exceptionalorbits}, there are two distinguished Reeb orbits $e_{1,0}$ and $e_{0,1}$ in $\partial X_\Omega$, where $z_2=0$ and $z_1=0$ respectively. We now discuss the Reeb dynamics on the rest of $\partial X_\Omega$ where $z_1,z_2\neq 0$. Here we use coordinates $(r_1,\theta_1,r_2,\theta_2)$ where $z_1=r_1e^{i\theta_1}$ and $z_2=r_2e^{i\theta_2}$.

Let $(x,y)\in\partial\Omega$ with $x,y>0$. Let $(a,b)$ be an outward normal vector to $\partial\Omega$ at $(x,y)$. On the two-torus $\mu^{-1}(x,y)$, the Reeb vector field is given by
\begin{equation}
\label{eqn:toricReeb}
R = \frac{2\pi}{ax+by}\left(a\frac{\partial}{\partial\theta_1}+b\frac{\partial}{\partial\theta_2}\right).
\end{equation}
(See \cite[\S2.2]{gh} for more general computations.) Using \eqref{eqn:toricReeb} we obtain the following information about the Reeb orbits in $\partial X_\Omega$.

\begin{description}
\item[Simple orbits.]
If $a/b\in\Q\cup\{\infty\}$, then the torus $\mu^{-1}(x,y)$ is foliated by Reeb orbits (and all simple Reeb orbits in $\partial X_\Omega$ other than $e_{1,0}$ and $e_{0,1}$ arise this way). In this case, our convention is to rescale the normal vector $(a,b)$ so that $a,b$ are relatively prime nonnegative integers.
\item[Symplectic action.]
In the above situation, if $\gamma$ is a simple Reeb orbit in $\mu^{-1}(x,y)$, then its symplectic action is
\begin{equation}
\label{eqn:generalaction}
\mathcal{A}(\gamma) = ax+by.
\end{equation}
We also have
\begin{equation}
\label{eqn:exceptionalaction}
\mathcal{A}(e_{1,0})=a(\Omega), \quad\quad \mathcal{A}(e_{0,1})=b(\Omega).
\end{equation}
\item[Linking numbers.]
The linking numbers of simple Reeb orbits in $\partial X_\Omega \simeq S^3$ are given as follows. First,
$e_{1,0}$ and $e_{0,1}$ form a Hopf link, and in particular
\begin{equation}
\label{eqn:Hopflink}
\ell(e_{1,0},e_{0,1})=1.
\end{equation}
Second, if $\gamma$ is a simple Reeb orbit distinct from $e_{1,0}$ and $e_{0,1}$, and if $(a,b)$ is the associated integer vector as above, then as one traverses $\gamma$, the coordinate $\theta_1$ winds $a$ times and the coordinate $\theta_2$ winds $b$ times. Consequently
\begin{equation}
\label{eqn:exceptionallinking}
\begin{split}
\ell(\gamma,e_{1,0}) &= b,\\
\ell(\gamma,e_{0,1}) &= a,
\end{split}
\end{equation}
Finally, let $\gamma'$ be a simple Reeb orbit distinct from $e_{1,0}$, $e_{0,1}$, and $\gamma$, and let $(a',b')$ be the associated integer vector as above. Orient the curve $\partial_+\Omega$ from $(a,0)$ to $(0,b)$, and suppose that $\mu(\gamma)$ precedes $\mu(\gamma')$ along this curve or that $\mu(\gamma)=\mu(\gamma')$. Then we can homotope $\gamma$ to $e_{1,0}^a$ and $\gamma'$ to $e_{0,1}^{b'}$ without crossing, so
\begin{equation}
\label{eqn:generallinking}
\ell(\gamma,\gamma')=ab'.
\end{equation}
\item[Relative first Chern number.]
We can find a section $s$ of $\xi$ over $\partial X_\Omega$ such that $s^{-1}(0)=e_{1,0}\cup e_{0,1}$, and $s$ takes values in $\op{Ker}(d\mu)$. The section $s$ determines a trivialization $\tau$ of $\xi$ over all simple Reeb orbits other than $e_{1,0}$ and $e_{0,1}$.  If $\gamma$ is such a Reeb orbit with associated integer vector $(a,b)$, then it follows from \eqref{eqn:exceptionallinking} after an orientation check that
\begin{equation}
\label{eqn:generaltrivialization}
c_\tau(\gamma) = a+b.
\end{equation}
\end{description}

To describe the ECH of $\partial X_\Omega$ at the chain level, we need to perturb $X_\Omega$ so that the contact form on the boundary becomes nondegenerate. We now describe a ``nice'' way to do so for convex toric domains.

Suppose that $X_\Omega\subset\R^4$ is a convex toric domain. If $(a,b)\in\Z^2_{\ge 0}$, define
\begin{equation}
\label{eqn:Omeganorm}
\|(a,b)\|_\Omega^* = \max\{ax+by \mid (x,y)\in\Omega\}.
\end{equation}
Note that the maximum in \eqref{eqn:Omeganorm} is realized by a point $(x,y)\in\partial_+\Omega$ where $(a,b)$ is an outward normal vector.

\begin{lem}
\label{lem:Lniceconvex}
Let $X_\Omega\subset\R^4$ be a convex toric domain and let $L>\max(a(\Omega),b(\Omega))$. Then there exists a smooth star-shaped domain $X\subset\R^4$ with the following properties:
\begin{itemize}
\item
The $C^0$ distance between $\partial X_\Omega$ and $\partial X$ is at most $L^{-1}$.
\item
The contact form ${\lambda_0}|_{\partial X}$ is nondegenerate.
\item
The simple Reeb orbits in $\partial X$ with symplectic action less than $L$ consist of, for each pair of relatively prime nonnegative integers $(a,b)$ with $\|(a,b)\|_{\Omega}^*<L$, an elliptic simple Reeb orbit $e_{a,b}$, and a positive hyperbolic simple Reeb orbit $h_{a,b}$ when $a,b>0$.
\item
We can arrange that either $X\subset X_\Omega$ or $X_\Omega\subset X$, and that this inclusion upgrades to a 2-anchored symplectic embedding $(X,e_{1,0},e_{0,1})\to (X_\Omega,e_{1,0},e_{0,1})$ or vice versa.
\item
If $a,b$ are relatively prime nonnegative integers with $\|(a,b)\|_{\Omega}^*<L$, and if $\gamma_{a,b}$ denotes either $e_{a,b}$ or $h_{a,b}$ when $a,b>0$, then
\begin{equation}
\label{eqn:Lniceconvexaction}
\left|\mathcal{A}(\gamma_{a,b}) - \|(a,b)\|_{\Omega}^*\right| < L^{-1}.
\end{equation}
\item
If $(a',b')$ is another pair of relatively prime nonnegative integers with $\|(a',b')\|_\Omega^*<L$, and if $\gamma_{a',b'}$ is distinct from $\gamma_{a,b}$, then the linking number in $\partial X \simeq S^3$ is
\begin{equation}
\label{eqn:Lniceconvexlinking}
\ell(\gamma_{a,b},\gamma_{a',b'}) = \max(ab',a'b).
\end{equation}
\item
There is a trivialization $\tau$ of $\xi$ over all of the simple Reeb orbits with symplectic action less than $L$ such that
\begin{align}
\label{eqn:ctauconvex}
c_\tau(\gamma_{a,b}) &= a+b,\\
\label{eqn:Qtauconvex}
Q_\tau(\gamma_{a,b}) &= ab,\\
\label{eqn:ctaueabconvex}
\op{CZ}_\tau(e_{a,b}^m) &= 1,\quad\quad \mbox{\rm if} \quad m\|(a,b)\|_\Omega^*<L,\\
\label{eqn:ctauhab}
\op{CZ}_\tau(h_{a,b}) &= 0.
\end{align}
\end{itemize}
\end{lem}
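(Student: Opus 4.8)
The plan is to produce $X$ by a Morse--Bott perturbation of $\partial X_\Omega$ supported near the finitely many tori that carry degenerate families of Reeb orbits, following the scheme of \cite[\S5]{Hutchings16} (see also \cite{gh}). Since $L>\max(a(\Omega),b(\Omega))$ and, because $0\in\op{int}(\Omega)$, $\|(a,b)\|_\Omega^*\to\infty$ as $|(a,b)|\to\infty$, only finitely many relatively prime $(a,b)\in\Z^2_{\ge0}$ satisfy $\|(a,b)\|_\Omega^*<L$; by \eqref{eqn:generalaction} these are exactly the slopes whose Morse--Bott torus $\mu^{-1}(x,y)$ (with $(a,b)$ an outward normal) carries orbits of action below $L$. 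For each such slope with $a,b>0$ I would perturb the contact form on a neighborhood of that torus using a perfect Morse function on the $S^1$ of Reeb orbits (one minimum, one maximum), replacing the torus by two nondegenerate simple orbits $e_{a,b}$ and $h_{a,b}$; the axis orbits $e_{1,0}$ and $e_{0,1}$ are handled by the corresponding perturbation near $\{z_2=0\}$ and $\{z_1=0\}$. Choosing every local perturbation of size $O(L^{-1})$ gives the $C^0$ bound in the first bullet, and since action is continuous in the domain, \eqref{eqn:Lniceconvexaction} follows from \eqref{eqn:generalaction}--\eqref{eqn:exceptionalaction} once the perturbations are small enough. A further $C^\infty$-small perturbation, not disturbing the action-$<L$ data (an open condition), upgrades this to full nondegeneracy as in the second bullet.

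The heart of the argument is the linearized return-map computation showing that the minimum yields the elliptic $e_{a,b}$ and the maximum the positive hyperbolic $h_{a,b}$, together with \eqref{eqn:ctaueabconvex}--\eqref{eqn:ctauhab}. Here convexity of $\Omega$ is essential: it forces the rotation number $\theta$ of the perturbed elliptic orbit to be a small positive number, of order the perturbation size, so that $\op{CZ}_\tau(e_{a,b}^m)=2\lfloor m\theta\rfloor+1=1$ precisely in the range $m\|(a,b)\|_\Omega^*<L$ where $m\theta<1$, while the positive hyperbolic orbit has $\op{CZ}_\tau(h_{a,b})=0$ in $\tau$. I expect this linearized analysis---verifying the sign of the rotation and its dependence on convexity, and matching it to the correct trivialization---to be the main obstacle, and it is where I would lean most heavily on \cite[\S5]{Hutchings16}.

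The remaining invariants are topological and survive the perturbation because they are integer-valued and the perturbation is $C^0$-small. I would take $\tau$ to be the trivialization induced by the section $s$ of \S\ref{ss:TDrev} (with an ad hoc choice along $e_{1,0},e_{0,1}$), so that \eqref{eqn:ctauconvex} is exactly \eqref{eqn:generaltrivialization} and $Q_\tau(\gamma_{a,b})=ab$ is the $\tau$-pushoff self-linking computed as in \S\ref{ss:TDrev}. For \eqref{eqn:Lniceconvexlinking} with $(a,b)\neq(a',b')$, the two orbits lie near well-separated tori, so the linking number equals its unperturbed value \eqref{eqn:generallinking}: orienting $\partial_+\Omega$ from $(a(\Omega),0)$ to $(0,b(\Omega))$ the normal slope $a/b$ decreases, so $\mu(\gamma_{a,b})$ preceding $\mu(\gamma_{a',b'})$ means $ab'>a'b$, giving $\ell=ab'=\max(ab',a'b)$; the case $(a,b)=(a',b')$ (the pair $e_{a,b},h_{a,b}$, both near one torus) gives $\ell=ab=Q_\tau(\gamma_{a,b})$, consistent with the maximum.

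Finally, for the fourth bullet I would choose each local perturbation to push $\partial X_\Omega$ strictly inward, so that $X\subset X_\Omega$; writing $a_X$ and $b_X$ for the values of $\pi|z_1|^2$ and $\pi|z_2|^2$ along the perturbed axis orbits $e_{1,0}(X)$ and $e_{0,1}(X)$, we then have $a_X<a(\Omega)$ and $b_X<b(\Omega)$ (reversing the sign of the perturbation gives $X_\Omega\subset X$). As in Remark~\ref{rem:inclusion}, the inclusion upgrades to a 2-anchored symplectic embedding via the two axis annuli $\{z_2=0,\ a_X\le\pi|z_1|^2\le a(\Omega)\}$ and $\{z_1=0,\ b_X\le\pi|z_2|^2\le b(\Omega)\}$, which are disjoint, symplectic, and have boundaries $e_{1,0}(X_\Omega)-e_{1,0}(X)$ and $e_{0,1}(X_\Omega)-e_{0,1}(X)$. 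The only care needed is to keep the perturbation near each axis invariant under the $S^1\times S^1$ action, so that the axis orbits remain round circles and these annuli are genuinely symplectic.
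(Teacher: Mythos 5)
Your proposal follows essentially the same route as the paper: a Morse--Bott perturbation near each torus of closed orbits producing an $e_{a,b}$/$h_{a,b}$ pair, the action, linking, and trivialization data inherited from the unperturbed formulas \eqref{eqn:generalaction}--\eqref{eqn:generaltrivialization}, and the axis annuli giving the 2-anchored inclusion. The one step you elide is not cosmetic, though: before any Morse--Bott perturbation one must first perturb $\partial_+\Omega$ itself to be smooth, \emph{strictly} convex, and nearly perpendicular to the axes (slopes $-\epsilon$ and $-\epsilon^{-1}$ at the endpoints, $\epsilon$ small and irrational). Your premise that the relatively prime pairs $(a,b)$ with $\|(a,b)\|_\Omega^*<L$ are ``exactly the slopes whose Morse--Bott torus $\mu^{-1}(x,y)$ carries orbits of action below $L$'' fails for a general convex toric domain: for the ball, $\partial_+\Omega$ has slope $-1$ everywhere, so only $(1,1)$ is realized as an interior normal even though every $(a,b)$ with $(a+b)r<L$ must contribute an orbit by the third bullet; for the polydisk, a flat edge produces a three-dimensional family of closed orbits, which is not Morse--Bott; and corners of $\partial_+\Omega$ make $\partial X_\Omega$ non-smooth. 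The near-perpendicularity at the axes is likewise what guarantees that \emph{every} $(a,b)$ with $\|(a,b)\|_\Omega^*<L$ is the outward normal at a unique interior point of $\partial_+\Omega$, and hence that the third bullet holds as stated. With that preliminary $C^0$-small perturbation inserted at the start, the rest of your argument matches the paper's proof.
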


\begin{proof}[Proof of Lemma~\ref{lem:Lniceconvex}.]
To start, by a $C^0$-small perturbation of $\partial_+\Omega$, we can arrange that (i) $\partial_+\Omega$ is smooth; (ii) $\partial_+\Omega$ is strictly convex; and (iii) $\partial_+\Omega$ is nearly perpendicular to the axes. More precisely, there is a small irrational $\epsilon>0$ such that where $\partial_+\Omega$ meets the $y$ axis, its slope is $-\epsilon$, and where $\partial_+\Omega$ meets the $x$ axis, its slope is $-\epsilon^{-1}$. In particular, for each pair of relatively prime positive integers $(a,b)$ with $\|(a,b)\|_\Omega^*<L$, there is a unique point $(x,y)\in\partial_+\Omega$ at which $(a,b)$ is an outward normal vector to $\partial_+\Omega$.

Given $(x,y)$ as above, since $\partial_+\Omega$ is strictly convex, the circle of Reeb orbits in $\mu^{-1}(x,y)$ is Morse-Bott. Similarly\footnote{The picture in \cite[\S3.1]{HutchingsSullivan05} corresponds to a concave toric domain, and for our convex case the directions of the arrows should be reversed.} to \cite[\S3.1]{HutchingsSullivan05} (see \cite[\S3.1]{bourgeois} for a more general situation), the contact form can be perturbed in a neighborhood of $\mu^{-1}(x,y)$ (which corresponds to perturbing $X_\Omega$), so that this circle of Reeb orbits is reduced to two nondegenerate simple Reeb orbits: an elliptic orbit $e_{a,b}$, for which the linearized return map is a slight positive rotation, and a positive hyperbolic orbit $h_{a,b}$. The perturbation may also create new Reeb orbits of action greater than $L$. We can further perturb the contact form to arrange that the Reeb orbits with action greater than $L$ are nondegenerate\footnote{This is not actually necessary to define the filtered embedded contact homology $ECH^L$.}.

The above implies the first four bullet points in the lemma. The action estimate \eqref{eqn:Lniceconvexaction} now follows from \eqref{eqn:generalaction} and \eqref{eqn:exceptionalaction}. The linking number formula \eqref{eqn:Lniceconvexlinking} follows from \eqref{eqn:Hopflink}, \eqref{eqn:exceptionallinking}, and \eqref{eqn:generallinking}.

To prove the last bullet point, over the simple Reeb orbits $e_{a,b}$ and $h_{a,b}$ with $a,b>0$, choose the trivialization $\tau$ as in \eqref{eqn:generaltrivialization}. Then for $a,b>0$, equation \eqref{eqn:ctauconvex} follows from \eqref{eqn:generaltrivialization}, equation \eqref{eqn:Qtauconvex} follows similarly to \eqref{eqn:Lniceconvexlinking}, and equations \eqref{eqn:ctaueabconvex} and \eqref{eqn:ctauhab} follow from the definition of the Conley-Zehner index in e.g.\ \cite[\S3.2]{Hutchingslec}. The trivialization $\tau$ has an extension over $e_{1,0}$ and $e_{0,1}$ satisfying equations \eqref{eqn:ctauconvex}, \eqref{eqn:Qtauconvex}, and \eqref{eqn:ctaueabconvex}, as in \cite[\S3.7]{Hutchingslec}.
\end{proof}

A slight modification of the above lemma holds for concave toric domains. To state it, if $X_\Omega\subset\R^4$ is a concave toric domain, and if $(a,b)\in\Z^2_{\ge 0}$, define
\[
[(a,b)]_\Omega = \min\{ax+by \mid (x,y)\in\partial_+\Omega\}.
\]

\begin{lem}
\label{lem:Lniceconcave}
Let $X_\Omega$ be a concave toric domain and let $L>\max(a(\Omega),b(\Omega))$. Then there exists a smooth star-shaped domain $X\subset\R^4$ such that:
\begin{itemize}
\item
The first five bullet points in Lemma~\ref{lem:Lniceconvex} hold, with $\|\cdot\|_\Omega^*$ replaced by $[\cdot]_\Omega$.
\item
If $(a,b)$ and $(a',b')$ are pairs of relatively prime nonnegative integers with $[(a,b)]_\Omega, [(a',b')]_\Omega <L$, and if $\gamma_{a',b'}$ is distinct from $\gamma_{a,b}$, then the linking number in $\partial X \simeq S^3$ is
\[
\ell(\gamma_{a,b},\gamma_{a',b'}) = \min(ab',a'b).
\]
\item
There is a trivialization $\tau$ of $\xi$ over all of the simple Reeb orbits with symplectic action less than $L$ such that
\[
\begin{split}
c_\tau(\gamma_{a,b}) &= a+b,\\
Q_\tau(\gamma_{a,b}) &= ab,\\
\op{CZ}_\tau(e_{a,b}^m) &= -1,\quad\quad \mbox{\rm if} \quad m\|(a,b)\|_\Omega^*<L \quad \mbox{and} \quad a,b>0,\\
\op{CZ}_\tau(h_{a,b}) &= 0,\\
\op{CZ}_\tau(e_{1,0}), \op{CZ}_\tau(e_{0,1}) &> L.
\end{split}
\]
\end{itemize}
\end{lem}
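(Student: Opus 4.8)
The plan is to run the proof of Lemma~\ref{lem:Lniceconvex} essentially verbatim, changing only the places where concavity flips a sign or reverses an ordering. First I would perturb $\partial_+\Omega$ to be smooth and strictly concave (keeping $\R^2_{\ge 0}\setminus\Omega$ convex) and nearly \emph{tangent} to the axes---the mirror image of the convex normalization---so that for a small irrational $\epsilon>0$ the slope of $\partial_+\Omega$ is $-\epsilon$ where it meets the $x$-axis and $-\epsilon^{-1}$ where it meets the $y$-axis. As in the convex case, each relatively prime $(a,b)$ with $[(a,b)]_\Omega<L$ is realized by a unique outward normal along $\partial_+\Omega$, the corresponding Morse--Bott circle is nondegenerate, and the local model of \cite[\S3.1]{HutchingsSullivan05}---whose arrows now apply \emph{directly}, rather than reversed as in the convex proof---replaces it by an elliptic orbit $e_{a,b}$ together with a positive hyperbolic orbit $h_{a,b}$ when $a,b>0$. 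Because the outward normal $(a,b)$ now realizes the \emph{minimum} of $ax+by$ over $\partial_+\Omega$, the action estimate follows from \eqref{eqn:generalaction} and \eqref{eqn:exceptionalaction} with $[(a,b)]_\Omega$ in place of $\|(a,b)\|_\Omega^*$.

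For the linking and trivialization data I would argue exactly as before, noting the single structural change: since $\partial_+\Omega$ now curves the opposite way, the normal ratio $a/b$ \emph{increases} as one traverses $\partial_+\Omega$ from $(a(\Omega),0)$ to $(0,b(\Omega))$. Thus in \eqref{eqn:generallinking} the orbit that precedes the other carries the smaller value of $ab'$, giving $\ell(\gamma_{a,b},\gamma_{a',b'})=\min(ab',a'b)$. The formulas $c_\tau=a+b$ and $Q_\tau=ab$ are unchanged, following from \eqref{eqn:generaltrivialization} as in the convex case, and the trivialization $\tau$ extends over $e_{1,0}$ and $e_{0,1}$ as in \cite[\S3.7]{Hutchingslec}. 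The one sign flip is in the Conley--Zehnder index of the $e_{a,b}$ with $a,b>0$: the reversed curvature makes the linearized return map a slight \emph{negative} rotation, so every iterate with action below $L$ has $\op{CZ}_\tau(e_{a,b}^m)=-1$, while $\op{CZ}_\tau(h_{a,b})=0$ as before.

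The genuinely new statement, and the step I expect to be the main obstacle, is $\op{CZ}_\tau(e_{1,0}),\op{CZ}_\tau(e_{0,1})>L$. In the convex case these corner orbits sit at sharp corners (normal cone roughly a quarter-turn) and have bounded index, but in the concave case the tangency normalization makes $\partial_+\Omega$ meet each axis tangentially, so $\Omega$ acquires a cusp at $(a(\Omega),0)$ whose outward-normal cone is nearly a half-turn, running from the bottom-edge normal $(0,-1)$ through $(1,0)$ to the $\partial_+\Omega$-endpoint normal. The orbit $e_{1,0}$ lies in the interior of this cone, and as the cusp is flattened its linearized return map becomes an arbitrarily large rotation; one arranges $\op{CZ}_\tau(e_{1,0})>L$ by smoothing sharply enough (how sharply depends on $L$), and symmetrically for $e_{0,1}$. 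This is consistent with the small actions $\A(e_{1,0})=a(\Omega)$ and $\A(e_{0,1})=b(\Omega)$, since for an elliptic orbit the action and the rotation number are independent. The quantitative version of this rotation estimate is exactly the content of the concave computation in \cite[\S3.2]{concave}, which I would cite rather than reprove. The remaining first five bullet points---smoothness, $C^0$-closeness, nondegeneracy, the list of low-action orbits, and the $2$-anchored inclusion $(X,e_{1,0},e_{0,1})\to(X_\Omega,e_{1,0},e_{0,1})$ via the evident annuli---are obtained verbatim from the proof of Lemma~\ref{lem:Lniceconvex}.
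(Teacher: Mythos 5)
Your proposal is correct and follows essentially the same route as the paper: the paper's proof likewise perturbs $\partial_+\Omega$ to be smooth, strictly concave, and nearly tangent to the axes (with exactly the slopes $-\epsilon$ and $-\epsilon^{-1}$ you specify) and then declares that the rest follows the proof of Lemma~\ref{lem:Lniceconvex}. The sign flips you identify (min versus max in the linking numbers, negative rotation giving $\op{CZ}_\tau(e_{a,b}^m)=-1$, and the large rotation numbers of $e_{1,0},e_{0,1}$ forced by the near-tangency) are precisely the details the paper leaves implicit.
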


\begin{proof}
To start, by a $C^0$-small perturbation of $\partial_+\Omega$, we can arrange that (i) $\partial_+\Omega$ is smooth; (ii) $\partial_+\Omega$ is strictly concave; and (iii) $\partial_+\Omega$ is nearly tangent to the axes. More precisely, there is a small irrational $\epsilon>0$ such that where $\partial_+\Omega$ meets the $y$ axis, its slope is $-\epsilon^{-1}$, and where $\partial_+\Omega$ meets the $x$ axis, its slope is $-\epsilon$. The rest of the argument follows the proof of Lemma~\ref{lem:Lniceconvex}.
\end{proof}

\begin{remark}
We are using a different notational convention from \cite{concave,Hutchings16}; a Reeb orbit $e_{a,b}$ or $h_{a,b}$ here corresponds to $e_{b,a}$ or $h_{b,a}$ in those references.
\end{remark}

%%%%%%%%%%%%%%%%%%%%%%%%%%%%%%%%%%%%%%%%%%%%%%%%%%%%%%%%%%%%%%%%%

\subsection{Combinatorial ECH generators for convex and concave toric domains}
\label{ss:cECC}

We now review how to combinatorially describe the ECH generators, their ECH indices, and their approximate symplectic actions, for ``nice'' perturbations of convex and concave toric domains. This is based on \cite{Hutchings16} and \cite{concave} with some minor notational changes.

\subsubsection{Convex toric domains}

\begin{defn}
\cite[Def.\ 1.9]{Hutchings16}
A {\em convex integral path\/} is a path $\Lambda$ in the plane such that:
\begin{itemize}
\item
The endpoints of $\Lambda$ are $(0,y(\Lambda))$ and $(x(\Lambda),0)$ where $x(\Lambda)$ and $y(\Lambda)$ are nonnegative integers.
\item
$\Lambda$ is the graph of a piecewise linear concave function $f:[0,x(\Lambda)]\to[0,y(\Lambda)]$ with $f'(0)\le 0$, possibly together with a vertical line segment at the right.
\item
The {\em vertices\/} of $\Lambda$ (the points at which its slope changes, and the endpoints) are lattice points.
\end{itemize}
\end{defn}

\begin{notation}
If $v$ is an {\em edge\/} of a convex integral path (a line segment between consecutive vertices), then the vector from the upper left endpoint to the lower right endpoint of $v$ has the form $(b,-a)$ where $a,b$ are nonnegative integers. Write $v^\perp=(a,b)$, and define the {\em multiplicity\/} of $v$, which we denote by $m(v)$, to be the greatest common divisor of $a$ and $b$.
\end{notation}

\begin{defn}
\label{def:Omegaaction}
If $X_\Omega$ is a convex toric domain and $\Lambda$ is a convex integral path, define the {\em $\Omega$-action\/} of $\Lambda$ to be
\[
\mathcal{A}_{X_\Omega}(\Lambda) = \mathcal{A}_\Omega(\Lambda) = \sum_{v\in\op{Edges}(\Lambda)}\big\|v^\perp\big\|_\Omega^*.
\]
\end{defn}

\begin{defn}
\cite[Def.\ 1.10]{Hutchings16}
A {\em convex generator\/} is a convex integral path $\Lambda$, together with a labeling of each edge by `$e$' or `$h$' (we omit the labeling from the notation). Horizontal and vertical edges are required to be labeled `$e$'.
\end{defn}

\begin{notation}
If $\Lambda$ is a convex generator, let $h(\Lambda)$ denote the number of edges that are labeled `$h$'. Let $e(\Lambda)$ denote the number of edges that are labeled `$e$', or that are labeled `$h$' and have multiplicity greater than one.
\end{notation}

\begin{defn}
\cite[Def.\ 1.11]{Hutchings16}
If $\Lambda$ is a convex generator, define the {\em combinatorial ECH index\/} of $\Lambda$ by
\begin{equation}
\label{eqn:Ihat}
\widehat{I}(\Lambda) = 2\left(\widehat{\mathcal{L}}(\Lambda)-1\right) - h(\Lambda).
\end{equation}
Here $\widehat{\mathcal{L}}(\Lambda)$ denotes the number of lattice points in the polygon bounded by $\Lambda$, the line segment from $(0,0)$ to $(x(\Lambda),0)$, and the line segment from $(0,0)$ to $(0,y(\Lambda))$, including lattice points on the boundary. Also, define the {\em combinatorial $J_0$ index\/} of $\Lambda$ by
\begin{equation}
\label{eqn:Jhat}
\widehat{J}_0(\Lambda) = \widehat{I}(\Lambda) - 2x(\Lambda) - 2y(\Lambda) - e(\Lambda).
\end{equation}
\end{defn}

\begin{lem}
\label{lem:convexbijection}
(cf.\ \cite[Lem.\ 5.4]{Hutchings16})
Let $X_\Omega\subset\R^4$ be a convex toric domain, and let $L > \max(a(\Omega),b(\Omega))$. Then a perturbation $X$ of $X_\Omega$ as in Lemma~\ref{lem:Lniceconvex} can be chosen so that there is a bijection
\begin{equation}
\label{eqn:convexbijection}
\imath:
\left\{\begin{array}{cc}
\mbox{convex generators $\Lambda$}\\
\mbox{with $\mathcal{A}_\Omega(\Lambda)<L$}
\end{array}\right\}
\longrightarrow
\left\{\begin{array}{cc}
\mbox{ECH generators $\alpha$ in $\partial X$}\\
\mbox{with $\mathcal{A}(\alpha)<L$} 
\end{array}
\right\}
\end{equation}
such that if $\imath(\Lambda)=\alpha$, then
\begin{align}
\label{eqn:convexbijectionaction}
|\mathcal{A}(\alpha) - \mathcal{A}_\Omega(\Lambda)| &< L^{-1},\\
\label{eqn:convexbijectionindex}
I(\alpha) &= \widehat{I}(\Lambda),\\
\label{eqn:convexbijectionj0}
J_0(\alpha) &= \widehat{J}_0(\Lambda).
\end{align}
\end{lem}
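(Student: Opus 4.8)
The plan is to define the bijection $\imath$ explicitly on edges, and then deduce the three numerical identities from the data recorded in Lemma~\ref{lem:Lniceconvex} together with Pick's theorem; this is essentially the computation in \cite[Lem.\ 5.4]{Hutchings16}. Given a convex generator $\Lambda$, each edge $v$ has $v^\perp = (a,b) = m(a_0,b_0)$ with $(a_0,b_0)$ relatively prime and $m=m(v)$. I send an edge labeled `$e$' to the orbit $e_{a_0,b_0}^m$, and an edge labeled `$h$' to the orbit set $\{(h_{a_0,b_0},1),(e_{a_0,b_0},m-1)\}$; the image $\imath(\Lambda)$ is the union of these contributions over all edges. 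Since a convex integral path is the graph of a concave function, its edges have pairwise distinct slopes and hence distinct primitive directions, so $\imath(\Lambda)$ is a genuine orbit set, and a valid ECH generator because hyperbolic orbits occur with multiplicity one. Conversely, any ECH generator built from the orbits of Lemma~\ref{lem:Lniceconvex} assigns to each primitive direction $(a_0,b_0)$ either some $e_{a_0,b_0}^m$ or some $\{(h_{a_0,b_0},1),(e_{a_0,b_0},m-1)\}$; laying out the corresponding edges in order of decreasing slope produces a unique convex generator mapping to it. Since every edge of a generator with $\mathcal{A}_\Omega(\Lambda)<L$ satisfies $\|v^\perp\|_\Omega^*<L$, only orbits constructed in Lemma~\ref{lem:Lniceconvex} are involved, so $\imath$ is a bijection at the combinatorial level.

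For the action identity, $\mathcal{A}(\imath(\Lambda)) = \sum_v m(v)\,\mathcal{A}(\gamma_{a_0,b_0})$ while $\mathcal{A}_\Omega(\Lambda) = \sum_v m(v)\,\|(a_0,b_0)\|_\Omega^*$. By \eqref{eqn:Lniceconvexaction} each simple-orbit error is below $L^{-1}$, but passing to the $m(v)$-fold cover multiplies it by $m(v)$. Since there are only finitely many generators with $\mathcal{A}_\Omega(\Lambda)<L$, and their total multiplicities are bounded, I would choose the perturbation in Lemma~\ref{lem:Lniceconvex} fine enough that the cumulative error for every such generator stays below $L^{-1}$, giving \eqref{eqn:convexbijectionaction}. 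Choosing $L$ outside the finite set of attained combinatorial action values then makes the conditions $\mathcal{A}_\Omega(\Lambda)<L$ and $\mathcal{A}(\imath(\Lambda))<L$ equivalent, so that $\imath$ restricts to the asserted bijection.

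The main content is the index identity \eqref{eqn:convexbijectionindex}. Telescoping \eqref{eqn:ctauconvex} over edges gives $c_\tau(\imath(\Lambda)) = x(\Lambda)+y(\Lambda)$. Using \eqref{eqn:Qtauconvex} and \eqref{eqn:Lniceconvexlinking}, one checks that both the within-edge and the between-edge contributions to $Q_\tau$ depend only on the vectors $v^\perp$ and not on the labels; ordering the edges by slope makes each $\max(a_vb_{v'},a_{v'}b_v)$ equal the term coming from the right-hand edge, and a short computation then identifies $Q_\tau(\imath(\Lambda))$ with twice the area of the region $R$ bounded by $\Lambda$ and the two axes. From \eqref{eqn:ctaueabconvex} and \eqref{eqn:ctauhab}, the term $\op{CZ}_\tau^I$ contributes $m(v)$ for each `$e$'-edge and $m(v)-1$ for each `$h$'-edge, so $\op{CZ}_\tau^I(\imath(\Lambda)) = \sum_v m(v) - h(\Lambda)$. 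Adding the three terms gives $I(\imath(\Lambda)) = x(\Lambda)+y(\Lambda) + 2\op{Area}(R) + \sum_v m(v) - h(\Lambda)$. Counting boundary lattice points of $R$ along the two axes and along $\Lambda$, with the three corners corrected for double counting, yields $B = x(\Lambda)+y(\Lambda)+\sum_v m(v)$, so Pick's theorem $\widehat{\mathcal{L}}(\Lambda) = \op{Area}(R) + B/2 + 1$ gives exactly $I(\imath(\Lambda)) = 2(\widehat{\mathcal{L}}(\Lambda)-1) - h(\Lambda) = \widehat{I}(\Lambda)$. Finally, for \eqref{eqn:convexbijectionj0}, the top-iterate Conley--Zehnder indices contribute $1$ for each `$e$'-edge and each `$h$'-edge of multiplicity greater than one, and $0$ for each `$h$'-edge of multiplicity one, so $\sum_i\op{CZ}_\tau(\alpha_i^{m_i}) = e(\Lambda)$; combined with $I=\widehat{I}$ and $c_\tau = x(\Lambda)+y(\Lambda)$ this yields $J_0(\imath(\Lambda)) = \widehat{I}(\Lambda) - 2(x(\Lambda)+y(\Lambda)) - e(\Lambda) = \widehat{J}_0(\Lambda)$.

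I expect the delicate points to be, first, the cumulative action estimate: because covers magnify the per-orbit error, the perturbation must be chosen \emph{after} fixing $L$, which is exactly why the statement only asserts that a suitable perturbation ``can be chosen''. Second, and more substantively, the geometric identification $Q_\tau = 2\op{Area}(R)$ together with the boundary-lattice-point count is where all the conventions about the trivialization $\tau$, the linking formula \eqref{eqn:Lniceconvexlinking}, and orientations must be lined up correctly; each step is routine, but this is the place where a sign or convention error would break the equality $I(\imath(\Lambda))=\widehat{I}(\Lambda)$.
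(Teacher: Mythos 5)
Your proposal is correct and follows essentially the same route as the paper: the paper defines the identical edge-by-edge bijection, handles the action estimate by feeding a larger cutoff into Lemma~\ref{lem:Lniceconvex} exactly as you do, and then simply cites \cite[\S5.3, Step 4]{Hutchings16} for the index and $J_0$ identities whose Pick's-theorem computation you have written out in full. The details you supply (the telescoping of $c_\tau$, the label-independence of $Q_\tau$, the $\op{CZ}$ bookkeeping giving $\sum_v m(v)-h(\Lambda)$ and $e(\Lambda)$) all check out.
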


\begin{proof}
The bijection $\imath$ is defined as follows. If $\Lambda$ is a convex generator, then $\imath(\Lambda)$ is the product over the edges of $\Lambda$ of the following contributions. Let $v$ be an edge of $\Lambda$ and write $v^\perp=(ma,mb)$ where $a,b\ge 0$ are relatively prime and $m$ is the multiplicity of $v$. If $v$ is labeled `$e$', then the contribution is $e_{a,b}^m$. If $v$ is labeled `$h$', then the contribution is $e_{a,b}^{m-1}h_{a,b}$. It follows from \eqref{eqn:Lniceconvexaction} that, possibly after choosing inputting a larger value of $L$ to Lemma~\ref{lem:Lniceconvex}, $\imath$ is a well-defined bijection \eqref{eqn:convexbijection} satisfying \eqref{eqn:convexbijectionaction}. The formulas \eqref{eqn:convexbijectionindex} and \eqref{eqn:convexbijectionj0} for $I$ and $J_0$ follow from equations \eqref{eqn:Lniceconvexlinking}--\eqref{eqn:ctauhab} as in \cite[\S5.3, Step 4]{Hutchings16}.
\end{proof}

\begin{remark}
\label{rem:he}
Under the bijection \eqref{eqn:convexbijection}, the total number of simple Reeb orbits that appear in $\alpha$ equals $e(\Lambda)+h(\Lambda)$.
\end{remark}

\subsubsection{Concave toric domains}

A variant of the above story holds for concave toric domains.

\begin{defn}
A {\em concave integral path\/} is a path $\Lambda$ in the plane such that:
\begin{itemize}
\item
The endpoints of $\Lambda$ are $(0,y(\Lambda))$ and $(x(\Lambda),0)$ where $x(\Lambda)$ and $y(\Lambda)$ are nonnegative integers.
\item
$\Lambda$ is the graph of a piecewise linear convex function $f:[0,x(\Lambda)]\to[0,y(\Lambda)]$ with $f'(0)< 0$ and $f(x(\Lambda))=0$.
\item
The vertices of $\Lambda$ are lattice points.
\end{itemize}
\end{defn}

\begin{defn}
If $X_\Omega$ is a concave toric domain and $\Lambda$ is a concave integral path, define the {\em $\Omega$-action\/} of $\Lambda$ to be
\[
\mathcal{A}_\Omega(\Lambda) = \sum_{v\in\op{Edges}(\Lambda)}\big[v^\perp\big]_\Omega.
\]
\end{defn}

\begin{defn}
A {\em concave generator\/} is a concave integral path $\Lambda$, together with a labeling of each edge by `$e$' or `$h$' (we omit the labeling from the notation). We define $h(\Lambda)$ and $e(\Lambda)$ as before.
\end{defn}

\begin{defn}
If $\Lambda$ is a concave generator, define the {\em combinatorial ECH index\/} of $\Lambda$ by
\begin{equation}
\label{eqn:checkI}
\widecheck{I}(\Lambda) = 2\widecheck{\mathcal{L}}(\Lambda) + h(\Lambda).
\end{equation}
Here $\widecheck{\mathcal{L}}(\Lambda)$ denotes the number of lattice points in the polygon bounded by $\Lambda$, the line segment from $(0,0)$ to $(x(\Lambda),0)$, and the line segment from $(0,0)$ to $(0,y(\Lambda))$, including lattice points on the boundary, except not including lattice points on $\Lambda$ itself. Also, define the {\em combinatorial $J_0$ index\/} of $\Lambda$ by
\begin{equation}
\label{eqn:Jcheck}
\widecheck{J}_0(\Lambda) = \widecheck{I}(\Lambda) - 2x(\Lambda) - 2y(\Lambda) + e(\Lambda).
\end{equation}
\end{defn}

The following is a special case of \cite[Lem.\ 3.3]{concave}.

\begin{lem}
\label{lem:concavebijection}
Let $X_\Omega\subset\R^4$ be a concave toric domain, and let $L > \max(a(\Omega),b(\Omega))$. Then a perturbation $X$ of $X_\Omega$ as in Lemma~\ref{lem:Lniceconcave} can be chosen so that there is a bijection
\[
\imath:
\left\{\begin{array}{cc}
\mbox{concave generators $\Lambda$ with}\\
\mbox{$\mathcal{A}_\Omega(\Lambda)<L$ and $\widecheck{I}(\Lambda) < L$}
\end{array}\right\}
\longrightarrow
\left\{\begin{array}{cc}
\mbox{ECH generators $\alpha$ in $\partial X$ with}\\
\mbox{$\mathcal{A}(\alpha)<L$ and $I(\alpha) < L$}
\end{array}
\right\}
\]
such that if $\imath(\Lambda) = \alpha$, then
\begin{align}
\nonumber
|\mathcal{A}(\alpha) - \mathcal{A}_\Omega(\Lambda)| &< L^{-1},\\
\nonumber
I(\alpha) &= \widecheck{I}(\Lambda),\\
\label{eqn:concavebijectionj0}
J_0(\alpha) &= \widecheck{J}_0(\Lambda).
\end{align}
\end{lem}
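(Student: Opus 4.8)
The plan is to mirror the proof of Lemma~\ref{lem:convexbijection}, substituting the Reeb-orbit data recorded in Lemma~\ref{lem:Lniceconcave} for that of Lemma~\ref{lem:Lniceconvex}. First I would define $\imath$ edge by edge exactly as in the convex case: given a concave generator $\Lambda$, for each edge $v$ write $v^\perp=(ma,mb)$ with $a,b\ge 0$ relatively prime and $m=m(v)$ the multiplicity, and let $\imath(\Lambda)$ be the product over edges of $e_{a,b}^m$ (if $v$ is labeled `$e$') or $e_{a,b}^{m-1}h_{a,b}$ (if $v$ is labeled `$h$'). The one structural feature distinguishing the concave case is that a concave integral path has $f'(0)<0$ strictly and terminates on the $x$-axis, hence has no horizontal or vertical edges; consequently every edge satisfies $a,b>0$, and the axis orbits $e_{1,0}$ and $e_{0,1}$ never appear in the image of $\imath$. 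The action estimate $|\mathcal{A}(\alpha)-\mathcal{A}_\Omega(\Lambda)|<L^{-1}$ is then immediate by summing the per-edge estimate from Lemma~\ref{lem:Lniceconcave} (the concave analogue of \eqref{eqn:Lniceconvexaction}, with $[\cdot]_\Omega$ in place of $\|\cdot\|_\Omega^*$), after enlarging $L$ slightly if necessary to avoid boundary effects near action $L$.

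For bijectivity I would argue as follows. Injectivity holds because the multiset of (primitive direction, label, multiplicity) data can be read off from an ECH generator, and a concave integral path is uniquely reconstructed from its edge multiset by arranging the edges in order of increasing slope. For surjectivity the key new point, absent in the convex case, is to show that an ECH generator $\alpha$ with $\mathcal{A}(\alpha)<L$ and $I(\alpha)<L$ cannot contain $e_{1,0}$ or $e_{0,1}$: since $\op{CZ}_\tau(e_{1,0}),\op{CZ}_\tau(e_{0,1})>L$ by the last bullet of Lemma~\ref{lem:Lniceconcave}, any generator involving one of these orbits already has a Conley--Zehnder contribution to $I$ exceeding $L$, and one checks that the remaining terms $c_\tau+Q_\tau$ (which are controlled by the action cutoff) cannot pull $I$ back below $L$, forcing $I(\alpha)\ge L$, a contradiction. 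Hence every such $\alpha$ is a product of orbits $e_{a,b},h_{a,b}$ with $a,b>0$, whose directions assemble into a unique concave generator $\Lambda$. This use of the $I<L$ cutoff to quarantine the axis orbits is precisely why the concave statement carries the extra hypotheses $\widecheck{I}(\Lambda)<L$ and $I(\alpha)<L$ that are absent from Lemma~\ref{lem:convexbijection}.

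Finally, the identities $I(\alpha)=\widecheck{I}(\Lambda)$ and $J_0(\alpha)=\widecheck{J}_0(\Lambda)$ are obtained by expanding the definitions of $I$ and $J_0$ using $c_\tau(\gamma_{a,b})=a+b$, $Q_\tau(\gamma_{a,b})=ab$, the min-linking-number formula $\ell(\gamma_{a,b},\gamma_{a',b'})=\min(ab',a'b)$, and the concave Conley--Zehnder values $\op{CZ}_\tau(e_{a,b}^m)=-1$ and $\op{CZ}_\tau(h_{a,b})=0$, all from Lemma~\ref{lem:Lniceconcave}, and then recognizing the resulting lattice sum as $2(\widecheck{\mathcal{L}}(\Lambda)-1)+h(\Lambda)$. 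This is the concave counterpart of the computation in \cite[\S5.3, Step 4]{Hutchings16}, and the lemma is the special case of \cite[Lem.\ 3.3]{concave} cut off by $\mathcal{A}<L$ and $I<L$.

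I expect this lattice-point bookkeeping to be the main obstacle. Relative to the convex case, the sign of the hyperbolic correction flips from $-h(\Lambda)$ to $+h(\Lambda)$, the elliptic Conley--Zehnder index flips from $+1$ to $-1$, and the count $\widecheck{\mathcal{L}}$ excludes the lattice points on $\Lambda$ itself rather than including them; so the bulk of the work is verifying that these reversals --- all of which originate in the $\min$ (rather than $\max$) linking numbers and the negative elliptic rotation numbers of a concave domain --- propagate consistently through the Pick's-theorem-style count, and that the subsidiary claim bounding $c_\tau+Q_\tau$ against the large $\op{CZ}$ of the axis orbits indeed excludes the generators outside the image of $\imath$.
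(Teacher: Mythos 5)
Your proposal is correct and takes essentially the same route as the paper, whose entire proof is the single sentence that the lemma follows from Lemma~\ref{lem:Lniceconcave} similarly to the proof of Lemma~\ref{lem:convexbijection} (the paper also notes the statement is a special case of \cite[Lem.\ 3.3]{concave}). You supply exactly the details the paper leaves implicit, and in particular you correctly isolate the one genuinely new point: the $I<L$ cutoff is what quarantines the axis orbits $e_{1,0},e_{0,1}$, whose Conley--Zehnder indices exceed $L$ by the last bullet of Lemma~\ref{lem:Lniceconcave}.
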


\begin{proof}
This follows from Lemma~\ref{lem:Lniceconcave}, similarly to the proof of Lemma~\ref{lem:convexbijection}.
\end{proof}

\begin{defn}
If $X_\Omega\subset\R^4$ is a convex or concave toric domain, we say that a star-shaped domain $X$ provided by Lemma~\ref{lem:convexbijection} or Lemma~\ref{lem:concavebijection} respectively is an {\em $L$-nice perturbation\/} of $X_\Omega$.
\end{defn}

\begin{remark}
\label{rem:combdiff}
There is also a combinatorial formula for the ECH differential $\partial_J$ for suitable $J$ on the ECH generators as described above for $L$-nice perturbations of convex and concave toric domains, similar to the differential for the ECH of $T^3$ \cite{HutchingsSullivan06} or the PFH of a Dehn twist \cite{HutchingsSullivan05} respectively. In principle this is proved in \cite{Choi}, although certain details are not fully explained. The formula in the convex case is stated in \cite[Conj.\ A.3]{Hutchings16}, and more details in the concave case are provided in \cite{Trejos}. Morse-Bott theory needed for this is worked out in \cite{yao1,yao2}.
\end{remark}

%%%%%%%%%%%%%%%%%%%%%%%%%%%%%%%%%%%%%%%%%%%%%%%%%%%%%%%%

\subsection{Cobordism maps on ECH}
\label{ss:cob}

We now review cobordism maps on embedded contact homology and some of their properties in the special case that we need.

\begin{defn}
Let $(Y_+,\lambda_+)$ and $(Y_-,\lambda_-)$ be contact three-manifolds. A {\em strong symplectic cobordism\/} from\footnote{This usage of the words ``from'' and ``to'' is natural from the perspective of symplectic geometry, but opposite from most topology literature.} $(Y_+,\lambda_+)$ to $(Y_-,\lambda_-)$ is a compact symplectic four-manifold $(W,\omega)$ such that $\partial W = Y_+ - Y_-$ and $\omega|_{Y_\pm}=d\lambda_\pm$.
\end{defn}

Given a cobordism as above, one can find a neighborhood $N_-$ of $Y_-$ in $W$, identified with $[0,\epsilon)\times Y_-$ for some $\epsilon>0$, in which $\omega=e^s\lambda_-$, where $s$ denotes the $[0,\epsilon)$ coordinate. Likewise, one can choose a neighborhood $N_+\simeq (-\epsilon,0]\times Y_+$ of $Y_+$ in $W$ in which $\omega=e^s\lambda_+$. Fix a choice of neighborhoods $N_-$ and $N_+$. Using the neighborhood identifications, we can glue to form the {\em symplectic completion\/}
\[
\overline{W} = ((-\infty,0] \times Y_-) \cup_{Y_-} W \cup_{Y_+} ([0,\infty)\times Y_+).
\]

\begin{defn}
\label{def:cobordismadmissible}
An almost complex structure $J$ on $\overline{W}$ is {\em cobordism-admissible\/} if:
\begin{itemize}
\item
On $W$, the almost complex structure $J$ is $\omega$-compatible.
\item
On $(-\infty,0]\times Y_-$ and $[0,\infty)\times Y_+$, the almost complex structure $J$ agrees with the restrictions of $\lambda_\pm$-compatible almost complex structures $J_\pm$ on $\R\times Y_\pm$.
\end{itemize}
\end{defn}

Assume now that the contact forms $\lambda_\pm$ are nondegenerate. For a cobordism-admissible almost complex structure $J$ as above, if $\alpha_+$ is an orbit set for $\lambda_+$ and $\alpha_-$ is an orbit set for $\lambda_-$, then we define a moduli space $\mathcal{M}^J(\alpha_+,\alpha_-)$ of $J$-holomorphic currents in $\overline{W}$ analogously to the symplectization case in \S\ref{ss:ECHrev}.

More generally, we define a {\em broken $J$-holomorphic current\/} from $\alpha_+$ to $\alpha_-$ to be a tuple $(\mathcal{C}_{N_-},\ldots,\mathcal{C}_{N_+})$ where $N_-\le 0 \le N_+$, for which there exist orbit sets $\alpha_-=\alpha_-(N_-),\ldots,\alpha_-(0)$ in $Y_-$ and orbit sets $\alpha_+(0),\ldots,\alpha_+(N_+)=\alpha_+$ in $Y_+$, such that:
\begin{itemize}
\item
$\mathcal{C}_i\in\mathcal{M}^{J_-}(\alpha_-(i+1),\alpha_-(i))/\R$ for $i=N_-,\ldots,-1$.
\item
$\mathcal{C}_0\in\mathcal{M}^J(\alpha_+(0),\alpha_-(0))$.
\item
$\mathcal{C}_i\in\mathcal{M}^{J_+}(\alpha_+(i),\alpha_+(i-1))/\R$ for $i=1,\ldots,N_+$.
\item
If $i\neq 0$, then $\mathcal{C}_i$ is not $\R$-invariant.
\end{itemize}
We denote the set of such broken $J$-holomorphic currents by $\overline{\mathcal{M}^J}(\alpha_+,\alpha_-)$.

\begin{prop}
\label{prop:cobordism}
(special case of \cite[Thm.\ 3.5]{Hutchings16})
Let $(W,\omega)$ be a strong symplectic cobordism from $(Y_+,\lambda_+)$ to $(Y_-,\lambda_-)$ and assume that the contact forms $\lambda_\pm$ are nondegenerate. Assume also\footnote{The homological assumptions \eqref{eqn:cobha} can be dropped, if one restricts to the subspace of ECH generated by nullhomologous ECH generators and assumes that the cobordism $(W,\omega)$ is ``weakly exact''; see \cite[\S3.10]{Hutchings16}. In this case the sense in which the cobordism map respects the grading needs to be stated more carefully.} that
\begin{equation}
\label{eqn:cobha}
H_1(Y_\pm)=H_2(Y_\pm)=H_2(W)=0.
\end{equation}
Then for each $L\in\R$ there is a well-defined cobordism map
\begin{equation}
\label{eqn:PhiL}
\Phi^L(W,\omega) : ECH^L_*(Y_+,\lambda_+) \longrightarrow ECH^L_*(Y_-,\lambda_-)
\end{equation}
with the following properties:
\begin{description}
\item{(a)}
If $L<L'$, then the diagram
\[
\begin{CD}
ECH^L_*(Y_+,\lambda_+) @>{\Phi^L(W,\omega)}>> ECH^L_*(Y_-,\lambda_-)\\
@VVV @VVV\\
ECH^{L'}_*(Y_+,\lambda_+) @>{\Phi^{L'}(W,\omega)}>> ECH^{L'}_*(Y_-,\lambda_-)
\end{CD}
\]
commutes. In particular, we have a well-defined direct limit
\begin{equation}
\label{eqn:directlimit}
\Phi(W,\omega) = \lim_{L\to\infty} \Phi^L(X,\omega) : ECH_*(Y_+,\lambda_+) \longrightarrow ECH_*(Y_-,\lambda_-).
\end{equation}
\item{(b)}
If $W$ is diffeomorphic to a product $[0,1]\times Y$, then the map \eqref{eqn:directlimit} is an isomorphism.
\item{(c)}
Let $J_\pm$ be generic $\lambda_\pm$-compatible almost complex structures on $\R\times Y_\pm$, and let $J$ be any cobordism-admissible almost complex structure on $\overline{W}$ extending $J_\pm$. Then for each $L$, the cobordism map \eqref{eqn:PhiL} is induced by a (noncanonical) chain map
\[
\phi: (ECC^L(Y_+,\lambda_+),\partial_{J_+}) \longrightarrow (ECC^L(Y_-,\lambda_-),\partial_{J_-})
\]
with the following property: If $\alpha_\pm$ are ECH generators in $Y_\pm$, and if the coefficient $\langle\phi\alpha_+,\alpha_-\rangle\neq 0$, then there exists a broken $J$-holomorphic current $(\mathcal{C}_{N_-},\ldots,\mathcal{C}_{N_+}) \in \overline{\mathcal{M}^J}(\alpha_+,\alpha_-)$.
\end{description}
\end{prop}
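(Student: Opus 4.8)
The plan is to define the cobordism map via Seiberg-Witten theory rather than by a direct count of $J$-holomorphic curves, since the latter is obstructed by transversality failures for multiply covered curves and by the analytic difficulty of gluing. This is the strategy of Hutchings-Taubes underlying \cite[Thm.\ 3.5]{Hutchings16}, to which I would reduce. Concretely, under the homological hypotheses \eqref{eqn:cobha}, the Taubes isomorphism \cite{Taubes10} identifies $ECH_*(Y_\pm,\lambda_\pm)$ with a version of Seiberg-Witten Floer cohomology of $Y_\pm$. Equipping the completed cobordism $\overline{W}$ with the symplectic form and a cobordism-admissible $J$, one obtains a family of perturbed Seiberg-Witten equations on $\overline{W}$ depending on a parameter $r>0$; counting solutions defines a map on Seiberg-Witten Floer cohomology, and transporting it through the Taubes isomorphism produces the ECH cobordism map. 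The action filtration is built in because, for $r$ large, the energy of a Seiberg-Witten solution is controlled by the total symplectic action of its asymptotic orbit sets; restricting to action $<L$ yields the filtered map $\Phi^L(W,\omega)$ of \eqref{eqn:PhiL}, as in \cite{HutchingsTaubes13}.

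Properties (a) and (b) then follow from the formal structure of the Floer theory. For (a), the Seiberg-Witten cobordism maps are natural with respect to raising the cutoff and with respect to the inclusion-induced maps $ECH^L\to ECH^{L'}$ and $ECH^L\to ECH$, so the squares commute and the direct limit \eqref{eqn:directlimit} is well-defined. For (b), a cobordism diffeomorphic to a product $[0,1]\times Y$ realizes the canonical identification of the Seiberg-Witten Floer cohomologies of its two ends (which depend only on $(Y,\xi)$), and hence induces an isomorphism on ECH by invariance.

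The main obstacle is property (c), the holomorphic curves axiom. Here I would fix generic $\lambda_\pm$-compatible $J_\pm$ and a cobordism-admissible extension $J$, and produce the chain map $\phi$ inducing $\Phi^L(W,\omega)$ from the Seiberg-Witten cobordism map computed at a single sufficiently large $r$ with a generic abstract perturbation, arranging that each nonzero coefficient $\langle\phi\alpha_+,\alpha_-\rangle$ is witnessed by a nonempty cut-down moduli space of Seiberg-Witten solutions on $\overline{W}$ with the prescribed asymptotics. The crucial and most delicate step is then Taubes' convergence theorem adapted to cobordisms: as $r\to\infty$, a sequence of such solutions has a subsequence converging to a broken $J$-holomorphic current in $\overline{\mathcal{M}^J}(\alpha_+,\alpha_-)$, the breaking reflecting the splitting into symplectization levels over $Y_\pm$ together with a single cobordism level. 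Establishing compactness for the Seiberg-Witten solutions uniformly in $r$, preventing the asymptotic orbit sets from degenerating in the limit, and matching the limiting currents to the broken-current structure defined just before the proposition is the technical heart of the argument; this is exactly what Hutchings-Taubes supply, and in our situation it may be invoked directly once one checks that \eqref{eqn:cobha} holds, which it does since $Y_\pm\cong S^3$ and $H_2(W)=0$.
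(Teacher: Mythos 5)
The paper gives no independent proof of this proposition: it is invoked verbatim as a special case of \cite[Thm.\ 3.5]{Hutchings16}, whose proof is exactly the Seiberg--Witten construction of \cite{HutchingsTaubes13} that you outline (definition of $\Phi^L$ via large-$r$ perturbed Seiberg--Witten solutions, naturality in $L$ for (a), the product-cobordism/invariance argument for (b), and Taubes-style $r\to\infty$ convergence to broken holomorphic currents for (c)). Your sketch is a faithful summary of that argument, so it matches the paper's approach; the only loose ends are minor (the chain map in (c) is one of the maps $\phi^{r_n}$ for $n$ large after stabilizing the finitely many nonzero coefficients below action $L$, and the hypothesis \eqref{eqn:cobha} is assumed in the statement rather than something to be checked via $Y_\pm\cong S^3$).
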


\begin{remark}
\label{rem:action}
The homological assumptions \eqref{eqn:cobha} imply that in part (c), if $\langle\phi\alpha_+,\alpha_-\rangle \neq0$, or more generally if $\overline{\mathcal{M}^J}(\alpha_+,\alpha_-)\neq\emptyset$, then $\mathcal{A}(\alpha_+)\ge \mathcal{A}(\alpha_-)$, with equality only if $\alpha_+=\emptyset$. %If in addition $\alpha_+\neq\emptyset$, then we have a strict inequality $\mathcal{A}(\alpha_+) > \mathcal{A}(\alpha_-)$.
\end{remark}

%%%%%%%%%%%%%%%%%%%%%%%%%%%

\subsection{Special properties of ECH cobordism maps for toric domains}
\label{sec:cobordismspecial}

The construction in \cite{HutchingsTaubes13} of the cobordism map \eqref{eqn:PhiL} does not directly count holomorphic currents, due to difficulties with multiple covers, but rather uses Seiberg-Witten theory; see \cite[\S5.5]{Hutchingslec}. This is why in part (c), we only obtain a broken holomorphic current. However in some special cases, namely for ``$L$-tame'' cobordisms defined in \cite[\S4.1]{Hutchings16}, we obtain actual holomorphic currents.

In particular, suppose that $X_{\Omega_-}, X_{\Omega_+} \subset \R^4$ are convex or concave toric domains. Suppose further that $X_{\Omega_-}$ is a convex toric domain or $X_{\Omega_+}$ is a concave toric domain (i.e.\ we are not in the case where $X_{\Omega_-}$ is a concave toric domain and $X_{\Omega_+}$ is a convex toric domain, which is studied in \cite{Cristofaro-Gardiner19}). Suppose there exists a symplectic embedding $\varphi:X_{\Omega_-}\to\op{int}(X_{\Omega_+})$. By Lemma~\ref{lem:convexbijection} and/or Lemma~\ref{lem:concavebijection}, we can find $L$-nice approximations $X_-\subset X_{\Omega_-}$ and $X_+\supset X_{\Omega_+}$. Let $W=X_+\setminus\varphi(\op{int}(X_-))$; this is a symplectic cobordism from $(\partial X_+,\lambda_+)$ to $(\partial X_-,\lambda_-)$, where $\lambda_\pm$ is the restriction to $\partial X_\pm$ of the standard Liouville form \eqref{eqn:lambda0}.

\begin{lem}
\label{lem:tamecobordism}
In the above situation, if $J$ is a generic cobordism-admissible almost complex structure on $\overline{W}$, then:
\begin{description}
\item{(a)}
If $\mathcal{A}(\alpha_+)<L$ and $\mathcal{C}\in\mathcal{M}^J(\alpha_+,\alpha_-)$ is a $J$-holomorphic current, then $I(\alpha_+)\ge I(\alpha_-)$.
\item{(b)}
In Proposition~\ref{prop:cobordism}(c), the broken $J$-holomorphic current $(\mathcal{C}_{N_-},\ldots,\mathcal{C}_{N_+})\in\overline{\mathcal{M}^J}(\alpha_+,\alpha_-)$ satisfies $N_-=N_+=0$, so that we have a $J$-holomorphic current $\mathcal{C}_0\in\mathcal{M}^J(\alpha_+,\alpha_-)$.
\end{description}
\end{lem}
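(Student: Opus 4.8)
The plan is to verify that $W$ is an \emph{$L$-tame} cobordism in the sense of \cite[\S4.1]{Hutchings16}, and then to extract (a) and (b) from the general properties of $L$-tame cobordisms proved there. First I would record the basic structural facts: since $W=X_+\setminus\varphi(\op{int}(X_-))$ is a subset of $\R^4$ carrying the restriction of the standard (exact) symplectic form, it is an exact strong symplectic cobordism from $(\partial X_+,\lambda_+)$ to $(\partial X_-,\lambda_-)$, and it satisfies the homological hypotheses \eqref{eqn:cobha} because $\partial X_\pm\cong S^3$ and $H_2(W)=0$. The hypothesis that $X_{\Omega_-}$ is convex or $X_{\Omega_+}$ is concave is precisely what powers the tameness verification: in the allowed configuration, the Reeb orbits that can be covered with high multiplicity by a holomorphic current are the elliptic orbits $e_{a,b}$, whose iterates have tightly controlled Conley--Zehnder index ($\op{CZ}_\tau(e_{a,b}^m)=1$ in the convex case by \eqref{eqn:ctaueabconvex}, and $=-1$ in the concave case), and this is what prevents multiply covered curves of negative ECH index from occurring below the action level $L$. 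Using the linking and index data of Lemma~\ref{lem:Lniceconvex} and Lemma~\ref{lem:Lniceconcave}, I would check that the criterion for $L$-tameness in \cite[\S4.1]{Hutchings16} is met, the concave cases requiring only the obvious modifications via \cite{concave}.

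For part (a), I would first note that since $\partial X_\pm\cong S^3$ and $H_2(W)=0$, any current $\mathcal{C}\in\mathcal{M}^J(\alpha_+,\alpha_-)$ has a unique relative homology class, and the ECH index is additive across the cobordism, so $I(\mathcal{C})=I(\alpha_+)-I(\alpha_-)$ once the absolute gradings are normalized as in Lemma~\ref{lem:convexbijection} and Lemma~\ref{lem:concavebijection} (the grading shift vanishes for these cobordisms in $\R^4$, as is standard in the toric setting). Thus (a) is equivalent to the inequality $I(\mathcal{C})\ge0$. Writing $\mathcal{C}=\sum_k d_kC_k$ as a sum over its distinct somewhere injective components, genericity of $J$ gives $\ind(C_k)\ge0$, and the ECH index inequality gives $I(C_k)\ge\ind(C_k)\ge0$; the substance of $L$-tameness is that the quadratic relative self-intersection terms and the cross terms cannot pull the total index $I(\mathcal{C})$ below $0$. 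I would then obtain $I(\mathcal{C})\ge0$ directly from the tameness conclusion of \cite[\S4.1]{Hutchings16}.

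For part (b), observe that the chain map $\phi$ of Proposition~\ref{prop:cobordism}(c) preserves the absolute $\Z$-grading, so a nonzero coefficient $\langle\phi\alpha_+,\alpha_-\rangle\neq0$ forces $I(\alpha_+)=I(\alpha_-)$, whence the ECH indices of the levels of the broken current $(\mathcal{C}_{N_-},\dots,\mathcal{C}_{N_+})\in\overline{\mathcal{M}^J}(\alpha_+,\alpha_-)$ sum to $I(\alpha_+)-I(\alpha_-)=0$. By part (a) the cobordism level satisfies $I(\mathcal{C}_0)\ge0$, while each symplectization level $\mathcal{C}_i$ with $i\neq0$ is by definition not $\R$-invariant and therefore, for the generic $J_\pm$, contributes $I(\mathcal{C}_i)\ge1$ by the symplectization index inequality (cf.\ \cite[\S3]{Hutchingslec}); this positivity at the ends is part of the tameness package. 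Since all the summands are nonnegative and sum to $0$, there can be no symplectization levels, so $N_-=N_+=0$ and $\mathcal{C}_0\in\mathcal{M}^J(\alpha_+,\alpha_-)$ is the required unbroken current.

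The hard part will be the tameness verification underlying both (a) and (b), namely establishing $I(\mathcal{C})\ge0$ for \emph{all} holomorphic currents with $\mathcal{A}(\alpha_+)<L$, including those with multiply covered components. The naive estimate $I(\mathcal{C})\ge\sum_k d_kI(C_k)$ is false because $Q_\tau$ is quadratic under covering, so one must use the explicit linking formula \eqref{eqn:Lniceconvexlinking} together with the Conley--Zehnder data of Lemma~\ref{lem:Lniceconvex} and Lemma~\ref{lem:Lniceconcave} to exclude negative-index covers; this is exactly the step that breaks down in the excluded case where $X_{\Omega_-}$ is concave and $X_{\Omega_+}$ is convex, which is handled by other means in \cite{Cristofaro-Gardiner19}. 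Once tameness is in hand, the grading additivity in (a) and the index bookkeeping in (b) are routine.
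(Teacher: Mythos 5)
Your proposal is correct and follows essentially the same route as the paper: the paper verifies that $W$ is $L$-tame by citing \cite[\S6]{Hutchings16} for the convex case (with the concave case "by a similar argument"), deduces (a) from \cite[Prop.\ 4.6(a)]{Hutchings16}, and deduces (b) from (a) plus the fact that non-$\R$-invariant symplectization levels have positive ECH index. Your additional bookkeeping (grading preservation of $\phi$ forcing the total index drop to be zero, and the correct identification of the multiple-cover issue as the substance of tameness) matches the intended argument.
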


\begin{proof}
If $X_{\Omega_-}$ and $X_{\Omega_+}$ are both convex toric domains, then $W$ is an ``$L$-tame'' cobordism in the sense of \cite[Def.\ 4.3]{Hutchings16}, as shown in \cite[\S6]{Hutchings16}. The same is true when $X_{\Omega_+}$ is a concave toric domain, by a similar argument. Assertion (a) now follows from \cite[Prop.\ 4.6(a)]{Hutchings16}. Assertion (b) follows from (a) together with the fact that every non-$\R$-invariant $J_\pm$-holomorphic current in $\R\times Y_\pm$ has positive ECH index, as reviewed in \cite[Prop.\ 3.7]{Hutchingslec}.
\end{proof}

We will also need the following lemma regarding linking numbers.

\begin{lem}
\label{lem:convexlinking}
Let $X_{\Omega_+}$ and $X_{\Omega_-}$ be convex toric domains, and let $\varphi:X_{\Omega_-} \to \op{int}(X_{\Omega_+})$ be a symplectic embedding. Let $L, X_+, X_-, W$ be as above. Let $J$ be any cobordism-admissible almost complex structure on $\overline{W}$. Let $\alpha_+$ and $\alpha_-$ be convex generators with $\mathcal{A}(\alpha_+)<L$. Suppose there exists a holomorphic current $\mathcal{C}\in\mathcal{M}^J(\alpha_+,\alpha_-)$.
\begin{description}
\item{(a)}
If there exists a $J$-holomorphic curve $C_1\in\mathcal{M}^J(e_{1,0},e_{1,0})$, then $x(\alpha_+) \ge x(\alpha_-)$.
\item{(b)}
If there exists a $J$-holomorphic curve $C_2\in\mathcal{M}^J(e_{0,1},e_{0,1})$, then $y(\alpha_+) \ge y(\alpha_-)$.
\end{description}
\end{lem}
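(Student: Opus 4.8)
The plan is to deduce both inequalities from positivity of intersections between the given current $\mathcal{C}$ and the curve $C_1$ (resp.\ $C_2$), converting the resulting nonnegative intersection number into a comparison of intercepts by means of the linking-number data recorded in Lemma~\ref{lem:Lniceconvex}. I treat part (a); part (b) is identical with $e_{0,1}$, $C_2$, and $x$ in place of $e_{1,0}$, $C_1$, and $y$.

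First I would reduce to the case that $\mathcal{C}$ and $C_1$ share no irreducible component. If $C_1$ occurs as a component of $\mathcal{C}=\sum_k d_k C_k$, I delete that component; this changes $\alpha_+$ and $\alpha_-$ by the same contribution of $e_{1,0}$ at the top and bottom ends, so the difference $y(\alpha_+)-y(\alpha_-)$ to be bounded is unchanged. With $\mathcal{C}$ and $C_1$ having no common component, both being $J$-holomorphic in $\overline{W}$, every interior intersection point counts positively, so the geometric intersection number satisfies $\mathcal{C}\cdot C_1\ge 0$.

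Next I would compute the \emph{homologically invariant} total intersection number of the two currents, which dominates the geometric count: writing $\mathcal{C}\ast C_1$ for this invariant (the relative/asymptotic intersection pairing, as in the relative intersection theory of \cite[\S4]{Hutchings09}), one has $\mathcal{C}\ast C_1\ge \mathcal{C}\cdot C_1\ge 0$, the difference being a sum of nonnegative asymptotic winding (writhe) contributions at the ends. The key point is that $\mathcal{C}\ast C_1$ is determined by the relative homology classes of $\mathcal{C}$ and $C_1$ together with the asymptotic orbit data. Since $H_2(W)=0$ by \eqref{eqn:cobha} and the ends lie in copies of $S^3$, the class of $C_1$ is pinned down by the fact that its only asymptotic orbit is $e_{1,0}$ at both ends, and the homological part of the pairing is governed by the linking numbers of $\alpha_\pm$ with $e_{1,0}$. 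By the linking formula \eqref{eqn:Lniceconvexlinking} together with \eqref{eqn:exceptionallinking}, these linking numbers are exactly the relevant intercepts, so $\mathcal{C}\ast C_1=y(\alpha_+)-y(\alpha_-)$. Combining with $\mathcal{C}\ast C_1\ge 0$ gives $y(\alpha_+)\ge y(\alpha_-)$, as desired.

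The main obstacle I anticipate is the bookkeeping in this last computation: verifying that the homologically invariant pairing really evaluates to the clean difference of intercepts, and controlling the asymptotic correction terms so that they appear with the correct (nonnegative) sign in $\mathcal{C}\ast C_1-\mathcal{C}\cdot C_1$. This is where the structure of the ends matters: I would use that $e_{1,0}$ is elliptic with small rotation (Lemma~\ref{lem:Lniceconvex}) and that the ends of $\mathcal{C}$ obey the partition conditions forced by $\mathcal{C}$ being a component of a low-index current (Lemma~\ref{lem:tamecobordism}), so the extremal winding numbers of the ends are determined and the corrections cannot reverse the inequality. The most delicate subcase is the degenerate one in which $\mathcal{C}$ and $C_1$ are asymptotic to the \emph{same} orbit $e_{1,0}$ at an end, where the pairing must be computed via the asymptotic correction at a shared end rather than by naive transversality.
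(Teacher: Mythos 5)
Your proposal is correct and follows essentially the same route as the paper: there one intersects $\mathcal{C}$ with $C_1$, converts the (nonnegative) algebraic intersection count into a difference of linking numbers with $e_{1,0}$ of slices of $\mathcal{C}$ at $s=\pm\infty$, identifies those linking numbers with the intercepts via \eqref{eqn:Lniceconvexlinking}, and handles ends of $\mathcal{C}$ asymptotic to $e_{1,0}$ itself using the winding bounds of \cite{hwz} --- exactly the shared-end subtlety you flag. The one caveat is that your claimed equality $\mathcal{C}\ast C_1 = y(\alpha_+)-y(\alpha_-)$ should really be the one-sided bound $\#(\mathcal{C}\cap C_1)\le y(\alpha_+)-y(\alpha_-)$, the slack coming precisely from those asymptotic winding corrections at ends on $e_{1,0}$; since only this direction is needed, the conclusion is unaffected.
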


\begin{proof}
We follow the proof of \cite[Lem.\ 5.1]{calabi} with minor modifications.

We first prove assertion (a). We can assume without loss of generality that $\mathcal{C}$ consists of a single somewhere injective curve $C$ which is distinct from $C_1$. Let $s_+>>0$ and let
\[
\eta_+ = C\cap (\{s_+\}\times\partial X_+).
\]
By standard results on asymptotics of holomorphic curves, see e.g.\ \cite[Cor.\ 2.5, 2.6]{siefring}, if $s_+$ is sufficiently large then $\eta_+$ is cut out transversely and disjoint from the Reeb orbits in $\alpha_+$. Likewise, let $s_-<<0$ and let $\eta_- = C \cap (\{s_-\}\times \partial X_-)$; if $|s_-|$ is sufficiently large then $\eta_-$ is cut out transversely and disjoint from the Reeb orbits in $\alpha_-$.

Now observe that
\begin{equation}
\label{eqn:elleta}
\ell(\eta_+,e_{1,0}) - \ell(\eta_-,e_{1,0}) = \#(C\cap C_1) \ge 0.
\end{equation}
Here `$\#$' denotes the algebraic intersection number. The equality on the left holds by the definition of linking number, and the inequality on the right holds by intersection positivity for $J$-holomorphic curves.

To start to analyze the left hand side of \eqref{eqn:elleta}, we can write
\[
\eta_+ = \coprod_{(\gamma,m)\in\alpha_+}\eta^{+}_{\gamma}
\]
where $\eta^{+}_{\gamma}$ is a link in a tubular neighborhood of $\gamma$ which, in this tubular neighborhood, is homologous to $m\gamma$. By the definition of linking number, we have
\[
\ell(\eta_+,e_{1,0}) = \sum_{(\gamma,m)\in\alpha_+}\ell(\eta^{+}_{\gamma},e_{1,0}).
\]
If $(a,b)\neq (1,0)$ and $\gamma=e_{a,b}$ or $\gamma=h_{a,b}$, then it follows from equation \eqref{eqn:Lniceconvexlinking} that
\[
\ell(\eta^{+}_{\gamma},e_{1,0}) = mb.
\]
If $\gamma=e_{1,0}$, then it follows from the winding number bounds from \cite[\S3]{hwz}, which are reviewed in our notation in \cite[Lem.\ 5.3(b)]{Hutchingslec}, that
\[
\ell(\eta^{+}_{e_{1,0}},e_{1,0}) \le 0.
\]
Combining the above three lines, we conclude that
\begin{equation}
\label{eqn:elleta+}
\ell(\eta_+,e_{1,0}) \le x(\alpha_+).
\end{equation}

A similar calculation shows that
\begin{equation}
\label{eqn:elleta-}
\ell(\eta_-,e_{1,0}) \ge x(\alpha_-).
\end{equation}
(In fact, if $e_{1,0}$ appears in $\alpha_-$, then the inequality \eqref{eqn:elleta-} is strict.) Combining \eqref{eqn:elleta}, \eqref{eqn:elleta+}, and \eqref{eqn:elleta-} completes the proof of (a). Assertion (b) is proved by a symmetric argument.
\end{proof}

\section{Proofs of the main theorems}
\label{sec:proofs}

We now prove the main results. Theorems~\ref{thm:polydiskball}, \ref{thm:convex1}, \ref{thm:convex2}, and \ref{thm:noanchor} are proved in \S\ref{sec:convexproofs}, and Theorem~\ref{thm:concave2} is proved in \S\ref{sec:concaveproofs}.

%%%%%%%%%%%%%%%%%%%%%%%%%%%%%%%%%%%%%

\subsection{Preliminary lemmas}

We begin with a lemma concerning the following geometric setup. Let $X_{\Omega_-}$ and $X_{\Omega_+}$ be convex toric domains, and suppose there exists a symplectic embedding
\[
\varphi:X_{\Omega_-}\longrightarrow\op{int}(X_{\Omega_+}).
\]
Let
\[
L>\max(a(\Omega_-),a(\Omega_+),b(\Omega_-),b(\Omega_+))
\]
and let $X_-\subset X_{\Omega_-}$ and $X_+\supset X_{\Omega_+}$ be $L$-nice approximations provided by Lemma~\ref{lem:convexbijection}. Write $Y_\pm=\partial X_\pm$ and let $\lambda_\pm$ denote the induced contact form on $Y_\pm$. By Lemma~\ref{lem:convexbijection}, ECH generators in $Y_-$ or $Y_+$ with symplectic action less than $L$ can be identified with convex generators with $\Omega_-$-action or $\Omega_+$-action less than $L$, respectively, via the bijection $\imath$, and we omit $\imath$ from the notation. Let $W$ be the symplectic cobordism from $(Y_+,\lambda_+)$ to $(Y_-,\lambda_-)$ given by $X_+\setminus\varphi(\op{int}(X_-))$ as in \S\ref{ss:cob}. Let $J$ be a cobordism-admissible almost complex structure on $\overline{W}$ as in Definition~\ref{def:cobordismadmissible}.

\begin{lem}
\label{lem:beyond}
Let $a,b\ge 0$ be relatively prime nonnegative integers, not both zero, and assume that $L>\mathcal{A}_{\Omega_+}(e_{a,b})$. Suppose there exist $J$-holomorphic curves $C_1\in\mathcal{M}^J(e_{1,0},e_{1,0})$ and $C_2\in\mathcal{M}^J(e_{0,1},e_{0,1})$. Let $\Lambda$ be an ECH generator in $Y_-$ with $\widehat{I}(\Lambda)=\widehat{I}(e_{a,b})$. Suppose there exists a $J$-holomorphic current $\mathcal{C}\in\mathcal{M}^J(e_{a,b},\Lambda)$. Then $\Lambda = e_{a,b}$.
\end{lem}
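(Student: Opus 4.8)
The plan is to pin down $\Lambda$ by combining three constraints on it: a linking bound coming from $C_1,C_2$, the index identity $\widehat I(\Lambda)=\widehat I(e_{a,b})$, and a genus bound coming from the current $\mathcal C$. First, since $x(e_{a,b})=b$ and $y(e_{a,b})=a$, applying Lemma~\ref{lem:convexlinking}(a) and (b) to $\mathcal C\in\mathcal M^J(e_{a,b},\Lambda)$ using $C_1$ and $C_2$ gives $x(\Lambda)\le b$ and $y(\Lambda)\le a$. The hypothesis $L>\mathcal A_{\Omega_+}(e_{a,b})$ guarantees that $e_{a,b}$, $\Lambda$, and the generators below all lie in the action window where the combinatorial dictionary of Lemma~\ref{lem:convexbijection} applies, so I may work throughout with convex generators and the formulas \eqref{eqn:Ihat} and \eqref{eqn:Jhat}.

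Next I would reduce $\mathcal C$ to a single curve and extract a $J_0$ bound. Because $e_{a,b}$ occurs with multiplicity one, and because in the completion $\overline W$ every component of a holomorphic current carries positive $\omega$-energy and hence at least one positive end, the current $\mathcal C$ must be a single connected somewhere injective curve $C$ whose only positive end is a simple cover of $e_{a,b}$; here tameness (Lemma~\ref{lem:tamecobordism}(b)) is used to exclude breaking. Applying the cobordism analogue of the $J_0$ inequality \eqref{eqn:J0bound} to $C$ — with the relative invariants $c_\tau,Q_\tau,\operatorname{CZ}_\tau$ as recorded in Lemma~\ref{lem:Lniceconvex} — the unique positive end contributes $2\cdot 1-1=1$, the (at least one) negative end contributes at least $1$, and $g(C)\ge 0$, so the left-hand side is nonnegative. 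Hence $J_0(e_{a,b})\ge J_0(\Lambda)$, i.e.\ $\widehat J_0(\Lambda)\le \widehat J_0(e_{a,b})=ab-a-b$ via Lemma~\ref{lem:convexbijection}.

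The final step is combinatorial. Writing $\widehat J_0=\widehat I-2x-2y-e$ and substituting $\widehat I(\Lambda)=(a+1)(b+1)$, the bound $\widehat J_0(\Lambda)\le ab-a-b$ rearranges to $2x(\Lambda)+2y(\Lambda)+e(\Lambda)\ge 2a+2b+1$. Together with $x(\Lambda)\le b$ and $y(\Lambda)\le a$ this forces
\[
e(\Lambda)\ \ge\ 2\bigl(b-x(\Lambda)\bigr)+2\bigl(a-y(\Lambda)\bigr)+1\ \ge\ 1 .
\]
I would then combine this with the index identity in the form $\widehat{\mathcal L}(\Lambda)=\tfrac12\bigl((a+1)(b+1)+2+h(\Lambda)\bigr)$ and the lattice bound $\widehat{\mathcal L}(\Lambda)\le (x(\Lambda)+1)(y(\Lambda)+1)$ to run an elementary case analysis, concluding that necessarily $x(\Lambda)=b$, $y(\Lambda)=a$, $e(\Lambda)=1$, $h(\Lambda)=0$, and that the path is the straight primitive segment from $(0,a)$ to $(b,0)$; that is, $\Lambda=e_{a,b}$.

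I expect the main obstacle to lie in the second and third steps. For the second, one must justify the $J_0$ bound in the cobordism (via relative adjunction and the partition/equality analysis underlying \eqref{eqn:J0bound}) and carefully make the reduction to a single somewhere injective curve. For the third, the subtlety is that there exist competing generators sharing the value $\widehat I(e_{a,b})$: for instance, for $(a,b)=(2,3)$ the generator $h_{1,2}h_{1,1}$ has $x(\Lambda)=b$, $y(\Lambda)=a$ and $\widehat I=12=\widehat I(e_{2,3})$, and is ruled out only by the $J_0$-derived inequality $e(\Lambda)\ge 1$, since $e(h_{1,2}h_{1,1})=0$. Verifying that no such competitor survives all three constraints simultaneously is where the real work lies.
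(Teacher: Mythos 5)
Your overall strategy (linking bounds from $C_1,C_2$, reduction of $\mathcal{C}$ to a single somewhere injective curve, the $J_0$ bound, and a final lattice-point count) is the same as the paper's, and your first step, the reduction to a single curve, and the concluding observation that a convex path with the prescribed endpoints and the minimal lattice count must be the straight segment are all fine. The genuine gap is in your use of the $J_0$ inequality: you only extract ``the left-hand side of \eqref{eqn:J0bound} is nonnegative, hence $\widehat J_0(\Lambda)\le\widehat J_0(e_{a,b})$,'' which discards the term $\sum_j(2n_j^--1)$. Since $C$ must have at least one negative end at \emph{each} simple orbit occurring in $\Lambda$, that sum is at least the number of distinct simple orbits in $\Lambda$, which by Remark~\ref{rem:he} equals $e(\Lambda)+h(\Lambda)$. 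Keeping it, the bound reads
\begin{equation*}
\widehat J_0(e_{a,b})-\widehat J_0(\Lambda)\ \ge\ 2g(C)-1+e(\Lambda)+h(\Lambda),
\end{equation*}
and comparing with the identity $\widehat J_0(e_{a,b})-\widehat J_0(\Lambda)=2\bigl(x(\Lambda)+y(\Lambda)-a-b\bigr)+e(\Lambda)-1$ coming from \eqref{eqn:Jhat} and $\widehat I(\Lambda)=\widehat I(e_{a,b})$, the terms $e(\Lambda)$ cancel and one gets $2g(C)+h(\Lambda)\le 2\bigl(x(\Lambda)+y(\Lambda)-a-b\bigr)\le 0$ directly from the linking bounds. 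This forces $g(C)=0$, $h(\Lambda)=0$, and $x(\Lambda)+y(\Lambda)=a+b$ in one line, with no case analysis at all; the lattice-point count then finishes the proof exactly as you indicate.

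By contrast, your weakened inequality only yields $e(\Lambda)\ge 2(b-x(\Lambda))+2(a-y(\Lambda))+1$, and you then defer the remaining work to an unspecified ``elementary case analysis'' over all generators satisfying this together with the index identity and the box bound $\widehat{\mathcal L}(\Lambda)\le(x(\Lambda)+1)(y(\Lambda)+1)$. That analysis is the actual content of the proof in your formulation, and it is not carried out: you would need to rule out, for every relatively prime pair $(a,b)$, not only competitors with $x(\Lambda)=x(e_{a,b})$ and $y(\Lambda)=y(e_{a,b})$ (where a vertex/lattice-point count does work), but also all generators strictly inside the box, where the interplay between the required number of edges, the labels, and the index is delicate. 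Nothing in your write-up shows this terminates in general, and as you yourself note, ``where the real work lies'' is precisely here. The fix is simply to not throw away the negative-end contributions in \eqref{eqn:J0bound}.
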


\begin{proof}
Since $e_{a,b}$ is a simple Reeb orbit, the current $\mathcal{C}$ consists of a single somewhere injective curve $C$ with multiplicity one\footnote{Since the symplectic form on $W$ is exact, every nonconstant $J$-holomorphic curve in $\overline{W}$ must have at least one positive end, as in Remark~\ref{rem:action}.}. It then follows from the $J_0$ bound \eqref{eqn:J0bound}, equation \eqref{eqn:convexbijectionj0}, and Remark~\ref{rem:he} that
\begin{equation}
\label{eqn:useJ0bound}
\widehat{J}_0(e_{a,b}) - \widehat{J}_0(\Lambda) \ge 2 g(C) - 1 + e(\Lambda) + h(\Lambda).
\end{equation}
Since $\widehat{I}(\Lambda)=\widehat{I}(e_{a,b})$, it follows from equation \eqref{eqn:Jhat} that
\begin{equation}
\label{eqn:useJ0formula}
\widehat{J}_0(e_{a,b}) - \widehat{J}_0(\Lambda) = 2(x(\Lambda)-b + y(\Lambda) - a) + e(\Lambda) - 1. 
\end{equation}
By Lemma~\ref{lem:convexlinking}, we have
\begin{equation}
\label{eqn:m3}
x(\Lambda) \le b
\end{equation}
and
\begin{equation}
\label{eqn:m4}
y(\Lambda) \le a.
\end{equation}
Combining \eqref{eqn:useJ0bound}, \eqref{eqn:useJ0formula}, \eqref{eqn:m3}, and \eqref{eqn:m4}, we obtain
\[
2 g(C) + h(\Lambda) \le 0,
\]
with equality only if $x(\Lambda) = b$ and $y(\Lambda) = a$. We conclude that
\[
g(C)=0, \quad\quad h(\Lambda) = 0, \quad\quad x(\Lambda)=b, \quad\quad y(\Lambda)=a.
\]
Since $\widehat{I}(\Lambda)=\widehat{I}(e_{a,b})$, the index formulas~\eqref{eqn:Ihat} and \eqref{eqn:convexbijectionindex}  imply that
\[
\widehat{\mathcal{L}}(\Lambda) = \widehat{\mathcal{L}}(e_{a,b}).
\]  
Since the path underlying $\Lambda$ is convex and has the same endpoints as the line segment corresponding to $e_{a,b}$, it follows that $\Lambda=e_{a,b}$. (We also get that $C$ is a cylinder, although we do not need this.)
\end{proof}

%\begin{remark}
%The above use of intersection positivity with $C_1$ and $C_2$ is similar to the proof of invariance of the linking number filtration on ECH in \cite[\S7]{calabi}.
%\end{remark}

\begin{prop}
\label{prop:eab}
Let $X_\Omega$ be a convex toric domain and let $a,b\ge 0$ be relatively prime nonnegative integers. Let $X$ be an $L$-nice perturbation of $X_\Omega$ where $L$ is large with respect to $a$, $b$, and $\Omega$. Let $Y=\partial X$ and let $\lambda$ denote the induced contact form on $Y$. Let $J_-$ be a generic $\lambda$-compatible almost complex structure on $\R\times Y$. Then $e_{a,b}$ is a cycle in $(ECC^L_*(Y,\lambda),\partial_{J_-})$ which represents a nonzero homology class in $ECH_*(Y,\lambda)$.
\end{prop}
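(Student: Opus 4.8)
The plan is to prove the two assertions---that $e_{a,b}$ is a $\partial_{J_-}$-cycle and that its class is nonzero---separately, in both cases analysing the $J_-$-holomorphic currents in the symplectization $\R\times Y$ with the same $J_0$-index and linking technology used in Lemma~\ref{lem:beyond}. The observation that makes that technology available here is that the $\R$-invariant cylinders $\R\times e_{1,0}$ and $\R\times e_{0,1}$ are $J_-$-holomorphic, so they play the role of the curves $C_1\in\mathcal{M}^{J_-}(e_{1,0},e_{1,0})$ and $C_2\in\mathcal{M}^{J_-}(e_{0,1},e_{0,1})$ hypothesized in Lemma~\ref{lem:convexlinking}. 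Viewing $\R\times Y$ as the completion of the trivial product cobordism of $X_\Omega$ to itself, the proof of Lemma~\ref{lem:convexlinking} applies verbatim, so for any holomorphic current in the symplectization we automatically obtain the linking inequalities $x(\alpha_+)\ge x(\alpha_-)$ and $y(\alpha_+)\ge y(\alpha_-)$.

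For the cycle property, suppose $\Lambda'$ is a convex generator appearing in $\partial_{J_-}e_{a,b}$, so $\widehat{I}(\Lambda')=\widehat{I}(e_{a,b})-1$ and there is a current $\mathcal{C}\in\mathcal{M}^{J_-}(e_{a,b},\Lambda')$ with $I(\mathcal{C})=1$. Since $e_{a,b}$ is simple, $\mathcal{C}$ is a single somewhere injective curve $C$ of multiplicity one. The $J_0$-bound \eqref{eqn:J0bound} together with Remark~\ref{rem:he} gives $\widehat{J}_0(e_{a,b})-\widehat{J}_0(\Lambda')\ge 2g(C)-1+e(\Lambda')+h(\Lambda')$, exactly as in \eqref{eqn:useJ0bound}. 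On the other hand the definition \eqref{eqn:Jhat} of $\widehat{J}_0$, the equality $\widehat I(e_{a,b})-\widehat I(\Lambda')=1$, and the linking inequalities $x(\Lambda')\le a=x(e_{a,b})$, $y(\Lambda')\le b=y(e_{a,b})$ give $\widehat{J}_0(e_{a,b})-\widehat{J}_0(\Lambda')\le e(\Lambda')$. Combining these yields $2g(C)+h(\Lambda')\le 1$. Since $\widehat{I}(\Lambda')$ is odd, $h(\Lambda')$ is odd by \eqref{eqn:Ihat}, forcing $h(\Lambda')=1$, $g(C)=0$, and equality throughout, so $x(\Lambda')=a$, $y(\Lambda')=b$, and $\widehat{\mathcal{L}}(\Lambda')=\widehat{\mathcal{L}}(e_{a,b})$. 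As $(a,b)$ are relatively prime, $\Lambda'$ must be the single straight edge of $e_{a,b}$ labelled `$h$', i.e.\ $\Lambda'=h_{a,b}$, which exists only when $a,b>0$. Hence when $a=0$ or $b=0$ we immediately conclude $\partial_{J_-}e_{a,b}=0$.

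When $a,b>0$ it remains to show $\langle\partial_{J_-}e_{a,b},h_{a,b}\rangle=0$ mod $2$. Here I would invoke the local Morse--Bott model underlying Lemma~\ref{lem:Lniceconvex}: the orbits $e_{a,b}$ and $h_{a,b}$ arise by perturbing a single Morse--Bott circle of Reeb orbits over the point of $\partial_+\Omega$ with outward normal $(a,b)$, and the index-one cylinders from $e_{a,b}$ to $h_{a,b}$ are in bijection with the gradient trajectories of a perfect Morse function on $S^1$ from its maximum to its minimum. There are exactly two such trajectories, so the count is even and $\partial_{J_-}e_{a,b}=0$. This is the same computation as for the ECH differential of $T^3$, and rests on the Morse--Bott foundations referenced in Remark~\ref{rem:combdiff}; pinning it down for the chosen generic $J_-$ is the first genuine point of difficulty.

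For nonzero homology, note $\widehat{I}(e_{a,b})$ is even, say $2k$, and since $ECH_*(Y,\lambda)\cong ECH_*(S^3,\xi_{\op{std}})$ is $\Z/2$ in degree $2k$, the class $[e_{a,b}]$ is either $0$ or the unique generator. The naive local argument does \emph{not} exclude $0$: running the analogous $J_0$-and-linking estimate on a curve from a generator $\beta$ of index $2k+1$ down to $e_{a,b}$ only yields $2g+h(\beta)+2e(\beta)\le 3$, which leaves cases open, so a global input is unavoidable. The plan is to identify $[e_{a,b}]$ with the generator of $ECH_{2k}$ via the combinatorial computation of the ECH of convex toric domains in \cite{Hutchings16}. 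I would pursue one of two routes: (a) show, using the $U$-map (which counts index-two currents through a marked point and is again analysable by the $J_0$-and-linking method, since $e_{a,b}$ is simple), that $U^k[e_{a,b}]=[\emptyset]$ by repeatedly producing the next lower elliptic generator, with base case the nonzero class $[\emptyset]\in ECH_0$; or (b) compare with an ellipsoid $E\subset\op{int}(X_\Omega)$, whose chain complex has vanishing differential and exactly one generator in each even degree, transporting nonvanishing across the product-type cobordism using Proposition~\ref{prop:cobordism}(b) and Lemma~\ref{lem:tamecobordism}. Establishing this identification---equivalently, that $e_{a,b}$ survives in the global homology---is the main obstacle of the proposition; the cycle property above is comparatively routine.
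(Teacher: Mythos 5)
Your reduction of the differential to the single coefficient $\langle\partial_{J_-}e_{a,b},h_{a,b}\rangle$ is sound: the trivial cylinders over $e_{1,0}$ and $e_{0,1}$ do make the linking argument of Lemma~\ref{lem:convexlinking} available in the symplectization, and your $\widehat{J}_0$ computation correctly forces $g=0$, $h(\Lambda')=1$, $x(\Lambda')=a$, $y(\Lambda')=b$, hence $\Lambda'=h_{a,b}$. But the two steps you yourself flag as the ``points of difficulty'' are genuine gaps, and they are exactly the steps the paper's proof is engineered to avoid. First, the claim $\langle\partial_{J_-}e_{a,b},h_{a,b}\rangle=2=0$ for a \emph{generic} $\lambda$-compatible $J_-$ is not available from anything in the paper: the identification of the differential with a Morse--Bott count of two gradient flow lines is precisely the content of the combinatorial differential of Remark~\ref{rem:combdiff}, whose foundations the paper explicitly describes as not fully established. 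Second, for nonvanishing, your route (b) points the cobordism arrow the wrong way: with an ellipsoid $E\subset\op{int}(X_\Omega)$ the map goes $ECH(\partial X_\Omega)\to ECH(\partial E)$, so knowing $[e_{a,b}]\neq 0$ downstairs in the ellipsoid gives you nothing upstairs unless you already know $\phi(e_{a,b})\neq 0$, which presupposes what you are trying to prove; and the convex analogue of the ``every representing cycle contains $e_{a,b}$ as a summand'' argument does not close up, because with the inequalities $x(\Lambda)\ge a$, $y(\Lambda)\ge b$ the $\widehat{J}_0$ estimate only yields $2g+2e(\Lambda)+h(\Lambda)\le 2$, leaving cases such as $e(\Lambda)=0$, $h(\Lambda)=2$ unexcluded. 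Route (a) via the $U$-map is not developed at all.

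The paper resolves both difficulties with a single device: it first establishes the statement for an ellipsoid $E(ac,bc)$ using the ``minimality'' of $e_{a,b}$ and \cite[Lem.~5.5]{Hutchings16} (a global input you would also need for any version of your plan), and then places that ellipsoid \emph{outside} $X_\Omega$, so that the cobordism map of Proposition~\ref{prop:cobordism} pushes the known nonzero cycle forward. Lemma~\ref{lem:tamecobordism}(b) and Lemma~\ref{lem:beyond} (the cobordism version of your $J_0$-and-linking argument, with the cylinders $C_1,C_2$ built from the coordinate planes of $W$) then force $\phi(e_{a,b})$ to be either $e_{a,b}$ or $0$; since $\phi(e_{a,b})$ is automatically a cycle representing a nonzero class, it must equal $e_{a,b}$. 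This delivers the cycle property and the nonvanishing simultaneously, with no Morse--Bott differential computation and no need to analyse $\partial_{J_-}$ directly. To repair your proposal you would either have to supply the Morse--Bott foundations for generic $J_-$ and a correct global nonvanishing argument, or reorganize it along the paper's lines with the comparison ellipsoid on the outside.
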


\begin{proof}
We proceed in four steps.

{\em Step 1:} We first prove the proposition in the special case where $a,b>0$ and $X_\Omega$ is the ellipsoid $E(bc,ac)$ where $c>0$ is a positive real number.

By \cite[Lem. 2.1(a)]{Hutchings16}, $e_{a,b}$ is ``minimal'' for $E(bc,ac)$ in the sense of \cite[Def.\ 1.15]{Hutchings16}. Minimality means that $e_{a,b}$ uniquely minimizes $\Omega$-action among all convex generators with the same ECH index as $e_{a,b}$ and with all edges labeled `$e$'. The proposition in this case now follows from \cite[Lem.\ 5.5]{Hutchings16}.

{\em Step 2:} We now set up the proof of the proposition in the general case where $a,b>0$.

Let $c>0$ be a positive real number which is sufficiently large that $X_\Omega \subset \operatorname{int}(E(bc,ac))$. Let $X_-\subset X_\Omega$ and $X_+\supset E(bc,ac)$ be $L$-nice perturbations of $X_\Omega$ and $E(bc,ac)$ respectively, where $L$ is large with respect to $a$, $b$, $c$, and $\Omega$. Let $Y_\pm$ and $W$ be as in the statement of Lemma~\ref{lem:beyond}. Let $J$ be a cobordism-admissible almost complex structure on $\overline{W}$ which restricts to $J_-$ on $(-\infty,0]\times Y_-$, and let $J_+$ denote the restriction of $J$ to $[0,\infty)\times Y_+$.

Define a surface $C_1\subset\overline{W}$ to be the union of $W\cap(\C\times\{0\})$ with the ``trivial half-cylinders'' $(-\infty,0]\times e_{1,0}$ in $(-\infty,0]\times Y_-$ and $[0,\infty)\times e_{1,0}$ in $[0,\infty)\times Y_+$. Likewise, define $C_2\subset\overline{W}$ to be the union of $W\cap(\{0\}\times \C)$ with the trivial half-cylinders over $e_{0,1}$ in $(-\infty,0]\times Y_-$ and $[0,\infty)\times Y_+$. By the definition of $\lambda_\pm$-compatible almost complex structure, $C_1$ and $C_2$ are necessarily $J$-holomorphic in $(-\infty,0]\times Y_-$ and $[0,\infty)\times Y_+$. We can choose $J$ so that $C_1$ and $C_2$ are $J$-holomorphic in $W$ as well.

Next, perturb $J$ to be generic as needed for Lemma~\ref{lem:tamecobordism}. By standard automatic transversality arguments, the holomorphic cylinders $C_1$ and $C_2$ persist under a sufficiently small perturbation. (See \cite{wendl} for a general treatment of automatic transversality, and \cite[Lem.\ 4.1]{hn} for a simple special case which is sufficient for the present situation.) We use the same notation $J$, $C_1$, and $C_2$ for the new almost complex structure and holomorphic cylinders.

{\em Step 3.\/} We now complete the proof of the proposition in the general case where $a,b>0$.

Let
\[
\phi: (ECC_*^L(Y_+,\lambda_+,),J_+) \longrightarrow (ECC_*^L(Y_-,\lambda_-),J_-)
\]
be a chain map as provided by Proposition~\ref{prop:cobordism}(c). Since $e_{a,b}$ is a cycle representing a nontrivial homology class in $ECH(Y_+,\lambda_+)$ by Step 1, it follows from Proposition~\ref{prop:cobordism}(a),(b) that
\[
\phi(e_{a,b}) \in ECC_*^L(Y_-,\lambda_-)
\]
is a cycle representing a nontrivial homology class in $ECH(Y_-,\lambda_-)$.

Let $\Lambda$ be an ECH generator for $(Y_-,\lambda_-)$, and suppose that $\langle\phi(e_{a,b}),\Lambda\rangle\neq 0$. By Lemma~\ref{lem:tamecobordism}(b), there exists a $J$-holomorphic current $\mathcal{C}\in\mathcal{M}^J(e_{a,b},\Lambda)$. By Lemma~\ref{lem:beyond}, we have $\Lambda=e_{a,b}$.

It follows from the previous paragraph that $\phi(e_{a,b})$ equals either $e_{a,b}$ or zero.  But we know that $\phi(e_{a,b})$ represents a nontrivial homology class in $ECH(Y_-,\lambda_-)$, so we must have $\phi(e_{a,b})=e_{a,b}$.

{\em Step 4.\/} It remains to prove that $e_{1,0}$ and $e_{0,1}$ are cycles which represent the nonzero homology class in $ECH_2(Y,\lambda)$.

We will restrict attention to $e_{1,0}$, as the proof for $e_{0,1}$ follows by a symmetric argument. As in Step 1, the claim holds if $X_\Omega = E(bc,c)$ where $b>1$ and $c>0$. The claim for general $X_\Omega$ then follows by repeating Steps 2 and 3.
\end{proof}

\begin{remark}
For suitable almost complex structures $J_-$, Proposition~\ref{prop:eab} would also follow from the combinatorial formula for the ECH differential\footnote{A combinatorial formula for the ECH differential on a different (not $L$-nice) perturbation of the ellipsoid $E(a,b)$ is computed in \cite{nw}.} in \cite[Conj.\ A.3]{Hutchings16}, together with algebraic calculations as in \cite[Prop.\ 5.9]{HutchingsSullivan06}.
\end{remark}

\begin{lem}
\label{lem:phiab}
Under the assumptions of Lemma~\ref{lem:beyond}, let
\[
\phi: (ECC(Y_+,\lambda_+),\partial_{J_+}) \longrightarrow (ECC(Y_-,\lambda_-),\partial_{J_-})
\]
be a chain map as provided by Proposition~\ref{prop:cobordism}(c). Then
\[
\phi(e_{a,b}) = e_{a,b}.
\]
\end{lem}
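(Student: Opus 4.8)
The plan is to reduce Lemma~\ref{lem:phiab} to the argument already carried out in Step 3 of the proof of Proposition~\ref{prop:eab}. The hypotheses of Lemma~\ref{lem:beyond} give us a symplectic embedding $\varphi : X_{\Omega_-} \to \operatorname{int}(X_{\Omega_+})$ of convex toric domains, the $L$-nice perturbations $X_\pm$, the cobordism $W$, and a cobordism-admissible $J$ on $\overline{W}$ admitting the two holomorphic cylinders $C_1 \in \mathcal{M}^J(e_{1,0},e_{1,0})$ and $C_2 \in \mathcal{M}^J(e_{0,1},e_{0,1})$. These are exactly the ingredients needed to run the earlier computation. First I would record that, by Proposition~\ref{prop:eab}, the generator $e_{a,b}$ is a cycle in $ECC^L_*(Y_+,\lambda_+)$ representing a nonzero class in $ECH_*(Y_+,\lambda_+)$; this uses that $L > \mathcal{A}_{\Omega_+}(e_{a,b})$ so that $e_{a,b}$ has action below the filtration level on the target. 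Then, since $\phi$ is a chain map inducing the filtered cobordism map, Proposition~\ref{prop:cobordism}(a),(b) forces $\phi(e_{a,b})$ to be a cycle representing a nonzero class in $ECH_*(Y_-,\lambda_-)$; in particular $\phi(e_{a,b}) \neq 0$.

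Next I would analyze which generators can appear in $\phi(e_{a,b})$. Suppose $\Lambda$ is an ECH generator in $Y_-$ with $\langle \phi(e_{a,b}), \Lambda\rangle \neq 0$. By Lemma~\ref{lem:tamecobordism}(b), since $W$ is $L$-tame (both domains being convex), the broken current supplied by Proposition~\ref{prop:cobordism}(c) is in fact a genuine $J$-holomorphic current $\mathcal{C} \in \mathcal{M}^J(e_{a,b},\Lambda)$ with $N_- = N_+ = 0$. Because $\phi$ is a chain map and $e_{a,b}$ is a cycle, $\phi(e_{a,b})$ is a cycle, hence a $\Z/2$-linear combination of generators of the same ECH index; moreover the cobordism map preserves the grading, so $\widehat{I}(\Lambda) = I(\Lambda) = I(e_{a,b}) = \widehat{I}(e_{a,b})$. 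With the existence of $\mathcal{C}$, the index equality, and the cylinders $C_1,C_2$ all in hand, Lemma~\ref{lem:beyond} applies verbatim and yields $\Lambda = e_{a,b}$.

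Combining the two paragraphs, the only generator that can occur in $\phi(e_{a,b})$ is $e_{a,b}$ itself, so $\phi(e_{a,b})$ is either $0$ or $e_{a,b}$; since it is nonzero, $\phi(e_{a,b}) = e_{a,b}$. This disposes of the case $a,b > 0$. For the remaining cases where one of $a,b$ vanishes, I would note that the argument goes through unchanged: Proposition~\ref{prop:eab} (Step 4) establishes that $e_{1,0}$ and $e_{0,1}$ are likewise cycles representing nonzero ECH classes, and Lemma~\ref{lem:beyond} is stated for all relatively prime $(a,b)$ not both zero, so the same index comparison and $J_0$-bound reasoning forces $\phi(e_{a,b}) = e_{a,b}$ in these boundary cases too.

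I expect the main obstacle to be bookkeeping rather than a genuine mathematical difficulty: the real content lives in Lemma~\ref{lem:beyond} and Proposition~\ref{prop:eab}, and this lemma is essentially the statement that the pieces assemble as in Step 3. The one point requiring care is verifying that the grading/action conditions are met so that the conclusions of Proposition~\ref{prop:eab} and Lemma~\ref{lem:beyond} are actually available, namely that $L$ exceeds the relevant actions and that $e_{a,b}$ sits below the filtration on the target; these are guaranteed by the standing hypothesis $L > \mathcal{A}_{\Omega_+}(e_{a,b})$ together with the setup preceding Lemma~\ref{lem:beyond}.
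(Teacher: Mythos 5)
Your proposal is correct and follows essentially the same route as the paper: the paper's proof simply invokes Proposition~\ref{prop:eab} to see that $e_{a,b}$ is a cycle representing a nontrivial class, and then repeats Step 3 of that proposition's proof (Proposition~\ref{prop:cobordism}(a),(b) to get nontriviality of $\phi(e_{a,b})$, Lemma~\ref{lem:tamecobordism}(b) to produce an unbroken current, and Lemma~\ref{lem:beyond} to pin down the target generator). Your extra remarks on grading preservation and the boundary cases $(1,0)$, $(0,1)$ are consistent with what the paper leaves implicit.
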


\begin{proof}
By Proposition~\ref{prop:eab}, $e_{a,b}$ is a cycle in the chain complex $(ECC(Y_+,\lambda_+,\partial_{J_+})$ representing a nontrivial element of $ECH(Y_+,\lambda_+)$. We now repeat Step 3 of the proof of Proposition~\ref{prop:eab} to conclude that $\phi(e_{a,b}) = e_{a,b}$.
\end{proof}

%%%%%%%%%%%%%%%%%%%%%%%%%%%%%%%%%%%%%%%%%%%%%%%

\subsection{Results for convex toric domains}
\label{sec:convexproofs}

\begin{proof}[Proof of Theorem \ref{thm:polydiskball}]
The ``if'' part of the theorem follows from Remark~\ref{rem:inclusion}, so we just need to prove the ``only if'' part. Assume that $a>1$ and that there exists an anchored symplectic embedding
\[
(\varphi,\Sigma) :(P(a,1),e_{1,0}) \longrightarrow (B^4(c),e_{1,0}).
\]
We need to show that $c> a+1$. By straightening $\Sigma$ near the boundary (see below), we can find an anchored symplectic embedding into $(B^4(c - \varepsilon),e_{1,0})$ for some $\varepsilon>0$. Thus it is enough to show that $c \ge a + 1$.

Fix $L>>a, c$. Let $X_-\subset P(a,1)$ and $X_+\supset B^4(c)$ be $L$-nice approximations, and let $W$ be the resulting cobordism from $(Y_+,\lambda_+)$ to $(Y_-,\lambda_-)$ as at the beginning of \S\ref{sec:convexproofs}.

Note that the anchor $\Sigma$ is necessarily transverse to $\partial P(a,1)$ and $\partial B^4(c)$. We can then perform a symplectic isotopy to straighten $\Sigma$ near the boundary. Using the fourth bullet point in Lemma~\ref{lem:Lniceconvex}, we can then construct an embedded symplectic surface $C_1$ in $\overline{W}$ such that
\[
\begin{split}
C_1\cap((-\infty,0]\times Y_-) &= (-\infty,0]\times e_{1,0},\\
C_1\cap([0,\infty)\times Y_+) &= [0,\infty)\times e_{1,0}.
\end{split}
\]

We can choose a cobordism-admissible almost complex structure $J$ on $\overline{W}$ such that $C_1$ is $J$-holomorphic. It follows from the relative adjunction formula in \cite[Eq.\ (3.3)]{Hutchingslec}, or similary from \eqref{eqn:J0bound}, that $C_1$ is a cylinder. Thus, as in the proof of Proposition~\ref{prop:eab}, the curve $C_1$ satisfies automatic transversality. Then if we make a sufficiently small perturbation of $J$ to be generic as needed for Lemma~\ref{lem:tamecobordism}, the curve $C_1$ will persist. Let
\[
\phi: (ECC(Y_+,\lambda_+),\partial_{J_+}) \longrightarrow (ECC(Y_-,\lambda_-),\partial_{J_-})
\]
be a chain map as provided by Proposition~\ref{prop:cobordism}(c) for this perturbed $J$.

By Proposition~\ref{prop:eab}, $e_{1,1}$ is a cycle in $(ECC(Y_+,\lambda_+),\partial_{J_+})$ which represents the nonzero class in $ECH_4(Y_+,\lambda_+)$. Then $\phi(e_{1,1})$ represents the nontrivial class in $ECH_4(Y_-,\lambda_-)$. In particular, $\phi(e_{1,1})\neq 0$. The only ECH generators in $Y_-$ with ECH index 4 are $e_{1,0}^2$, $e_{1,1}$, and $e_{0,2}^2$, so at least one of these must appear in $\phi(e_{1,1})$. 

We must have $\langle\phi e_{1,1},e_{0,1}^2\rangle = 0$ as in \eqref{eqn:m3}. Therefore $e_{1,0}^2$ or $e_{1,1}$ appears in $\phi(e_{1,1})$. 

By Remark~\ref{rem:action}, the chain map $\phi$ respects the symplectic action filtration. Recall from \eqref{eqn:convexbijectionaction} that the actions of convex generators are approximated by the $\Omega$-action in Definition~\ref{def:Omegaaction}. In particular, we compute that
\[
\mathcal{A}_{B^4(c)}(e_{1,1}) = c
\]
and
\[
\mathcal{A}_{P(a,1)}(e_{1,0}^2) = 2a, \quad\quad \mathcal{A}_{P(a,1)}(e_{1,1}) = a+1.
\]
Therefore, by \eqref{eqn:convexbijectionaction} and Remark~\ref{rem:action}, we have
\[
c + 2L^{-1} > \min(2a,a+1) = a+1.
\]
Since $L$ can be chosen arbitrarily large, we conclude that $c\ge a+1$.
\end{proof}

\begin{lem}
\label{lem:Aab}
Let $X_{\Omega_-}$ and $X_{\Omega_+}$ be convex toric domains in $\R^4$. Suppose that for every pair of positive relatively prime integers $a,b>0$, we have
\begin{equation}
\label{eqn:Aab}
\mathcal{A}_{\Omega_-}(e_{a,b}) \le \mathcal{A}_{\Omega_+}(e_{a,b}).
\end{equation}
Then $\Omega_- \subset \Omega_+$.
\end{lem}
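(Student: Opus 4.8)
The plan is to recognize the hypothesis as an inequality of support functions and then invoke the classical fact that containment of convex bodies is detected by their support functions. First I would recall that for the single-edge convex generator $e_{a,b}$ one has $\mathcal{A}_\Omega(e_{a,b}) = \|(a,b)\|_\Omega^* = \max\{ax+by \mid (x,y)\in\Omega\}$. Since $\widehat{\Omega}$ is a compact convex set that is symmetric across both coordinate axes and satisfies $\widehat{\Omega}\cap\R^2_{\ge0}=\Omega$, this quantity equals the value at $(a,b)$ of the support function $h_{\widehat\Omega}$ of $\widehat\Omega$: the symmetry of $\widehat\Omega$ guarantees that, for $a,b\ge 0$, the maximum of $ax+by$ over $\widehat\Omega$ is attained in the closed first quadrant, i.e.\ on $\Omega$. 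Thus the hypothesis \eqref{eqn:Aab} says exactly that $h_{\widehat{\Omega}_-}(a,b)\le h_{\widehat{\Omega}_+}(a,b)$ for every coprime pair of positive integers $(a,b)$.

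Next I would upgrade this inequality from coprime integer directions to all directions. Support functions are positively homogeneous, so the inequality immediately extends to every positive integer multiple $(na,nb)$, hence to every direction $(a,b)$ with $a,b>0$ rational. Support functions of compact sets are Lipschitz continuous, and the positive rational directions are dense among all directions in the closed first quadrant, so the inequality extends to all $(a,b)$ with $a,b\ge0$ — in particular to the axis directions $(1,0)$ and $(0,1)$, which the hypothesis does not cover directly. Finally, the reflection symmetry of $\widehat\Omega_\pm$ gives $h_{\widehat\Omega_\pm}(a,b)=h_{\widehat\Omega_\pm}(|a|,|b|)$, so the inequality $h_{\widehat\Omega_-}\le h_{\widehat\Omega_+}$ holds in every direction of $\R^2$.

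To conclude I would apply the separating-hyperplane characterization of inclusion: for compact convex sets, $\widehat\Omega_-\subset\widehat\Omega_+$ if and only if $h_{\widehat\Omega_-}\le h_{\widehat\Omega_+}$ pointwise, since if some $p\in\widehat\Omega_-$ lay outside $\widehat\Omega_+$ a separating hyperplane would produce a direction $u$ with $\langle u,p\rangle>h_{\widehat\Omega_+}(u)\ge h_{\widehat\Omega_-}(u)\ge\langle u,p\rangle$, a contradiction. Because $\Omega_\pm=\widehat\Omega_\pm\cap\R^2_{\ge0}$, this yields $\Omega_-\subset\Omega_+$. The lemma is essentially a reformulation of this classical characterization, so I do not expect a serious obstacle; the only point requiring genuine care is that the hypothesis is supplied only at the discrete set of coprime positive integer directions, which is precisely why the homogeneity-plus-density-plus-continuity argument of the second step is needed to recover the support-function inequality in every direction (and in particular along the two coordinate axes).
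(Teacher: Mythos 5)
Your proof is correct and takes essentially the same route as the paper's: the paper writes $\Omega_\pm$ as the intersection of $\R^2_{\ge0}$ with the closed half-planes bounded by the tangent lines to $\partial_+\Omega_\pm$ of slope $-b/a$ over coprime $a,b>0$, and observes that \eqref{eqn:Aab} is exactly the containment of the corresponding half-planes --- which is the half-space formulation of your support-function argument. The only difference is that you make explicit the homogeneity/density/continuity step needed to pass from coprime positive integer directions to all directions (including the axes), a point the paper absorbs without comment into the identity $\Omega = \R^2_{\ge 0}\cap\bigcap_{a,b}\Omega^{a,b}$.
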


\begin{proof}
If $X_\Omega$ is a convex toric domain and $a,b>0$ are relatively prime positive integers, let $\Omega^{a,b}\subset\R^2$ denote the closed half-space to the lower left of the tangent line to $\partial_+\Omega$ with slope $-a/b$. Then
\begin{equation}
\label{eqn:hs1}
\Omega = \R^2_{\ge 0} \cap \bigcap_{a,b}\Omega^{a,b}
\end{equation}
where the intersection is over pairs $(a,b)$ of relatively prime positive integers.
By Definition~\ref{def:Omegaaction}, the hypothesis \eqref{eqn:Aab} is equivalent to the statement that
\begin{equation}
\label{eqn:hs2}
\Omega_-^{a,b} \subset \Omega_+^{a,b}.
\end{equation}
It follows from \eqref{eqn:hs1} and \eqref{eqn:hs2} that $\Omega_-\subset\Omega_+$.
\end{proof}

\begin{proof}[Proof of Theorem~\ref{thm:convex2}.]
Suppose there exists a $2$-anchored symplectic embedding
\[
(\varphi,\Sigma_1,\Sigma_2): (X_{\Omega_-},e_{1,0},e_{0,1}) \longrightarrow (X_{\Omega_+},e_{1,0},e_{0,1}).
\]
We need to show that $\Omega_-\subset\Omega_+$. By Lemma~\ref{lem:Aab}, it is enough to show that if $a,b>0$ are relatively prime positive integers, then the action inequality \eqref{eqn:Aab} holds.

Fix $L>>a,b$. Let $X_\pm$, $Y_\pm$, and $W$ be as in Lemma~\ref{lem:beyond}. As in the proof of Theorem~\ref{thm:polydiskball}, from the anchors we can construct disjoint embedded symplectic cylinders $C_1, C_2$ in $\overline{W}$, such that
\[
\begin{split}
C_1\cap((-\infty,0]\times Y_-) &= (-\infty,0]\times e_{1,0},\\
C_1\cap([0,\infty)\times Y_+) &= [0,\infty)\times e_{1,0},\\
C_2\cap((-\infty,0]\times Y_-) &= (-\infty,0]\times e_{0,1},\\
C_2\cap([0,\infty)\times Y_+) &= [0,\infty)\times e_{0,1}.
\end{split}
\]
As in the proof of Proposition~\ref{prop:eab}, Step 3, we can choose a generic cobordism-admissible almost complex structure $J$ on $\overline{W}$ so that perturbations of $C_1$ and $C_2$, which we still denote by $C_1$ and $C_2$, are $J$-holomorphic. In particular, $C_1\in\mathcal{M}^J(e_{1,0},e_{1,0})$, and $C_2\in\mathcal{M}^J(e_{0,1},e_{0,1})$.

Now let
\[
\phi: ECC(Y_+,\lambda_+,J_+) \longrightarrow ECC(Y_-,\lambda_-,J_-)
\]
be a chain map as provided by Proposition~\ref{prop:cobordism}(c). By Lemma~\ref{lem:phiab}, we have
\[
\phi(e_{a,b}) = e_{a,b}.
\]
By \eqref{eqn:convexbijectionaction} and Remark~\ref{rem:action}, we have
\[
\mathcal{A}_{X_-}(e_{a,b}) < \mathcal{A}_{X_+}(e_{a,b}) + 2L^{-1}.
\]
Since $L$ can be chosen arbitrarily large, the desired inequality \eqref{eqn:Aab} follows.
\end{proof}

\begin{proof}[Proof of Theorem~\ref{thm:convex1}.]
This is a slight variation on the proof of Theorem~\ref{thm:convex2}.
Suppose there exists an anchored symplectic embedding
\[
(\varphi,\Sigma): (X_{\Omega_-},e_{1,0}) \longrightarrow (X_{\Omega_+},e_{1,0}).
\]
Suppose also that $a(\Omega_-)>b(\Omega_+)$, or equivalently
\begin{equation}
\label{eqn:Ae10}
\mathcal{A}_{\Omega_-}(e_{1,0}) > \mathcal{A}_{\Omega_+}(e_{0,1}).
\end{equation}
We need to show that $\Omega_-\subset\Omega_+$. By Lemma~\ref{lem:Aab}, it is enough to show that if $a,b>0$ are relatively prime positive integers, then the inequality \eqref{eqn:Aab} holds.

To prove \eqref{eqn:Aab}, let $L>>a,b$, and let $X_\pm$, $Y_\pm$, and $W$ be as in the statement of Lemma~\ref{lem:beyond}. From the anchor we can construct an embedded symplectic surface $C_1$ in $\overline{W}$ such that
\[
\begin{split}
C_1\cap((-\infty,0]\times Y_-) &= (-\infty,0]\times e_{1,0},\\
C_1\cap([0,\infty)\times Y_+) &= [0,\infty)\times e_{1,0}.
\end{split}
\]
We can choose a generic cobordism-admissible almost complex structure $J$ on $\overline{W}$ so that a perturbation of $C_1$, which we still denote by $C_1$, is $J$-holomorphic. In particular, $C_1\in\mathcal{M}^J(e_{1,0},e_{1,0})$. 

Now let
\[
\phi: ECC(Y_+,\lambda_+,J_+) \longrightarrow ECC(Y_-,\lambda_-,J_-)
\]
be a chain map as provided by Proposition~\ref{prop:cobordism}. We know from Proposition~\ref{prop:eab} that $e_{0,1}$ is a cycle in $(ECC(Y_+,\lambda)_,\partial_{J_+})$ which represents the nonzero class in $ECH_2(Y_+,\lambda_+)$. Then $\phi(e_{0,1})$ is a cycle in $(ECC(Y_-,\lambda_-),\partial_{J_-})$ which represents the nonzero class in $ECH_2(Y_-,\lambda_-)$. The only possibilities are either $\phi(e_{0,1})=e_{0,1}$ or $\phi(e_{0,1})=\phi(e_{1,0})$. If $L$ is chosen sufficiently large, then the latter possibility is ruled out by the action hypothesis \eqref{eqn:Ae10}, so $\phi(e_{0,1})=e_{0,1}$.

By Lemma~\ref{lem:tamecobordism}(b), there exists a $J$-holomorphic curve $C_2 \in \mathcal{M}^J(e_{0,1},e_{0,1})$. Given the $J$-holomorphic curves $C_1$ and $C_2$, we now complete the proof as in the last paragraph of the proof of Theorem~\ref{thm:convex2}.
\end{proof}

\begin{proof}[Proof of Theorem~\ref{thm:noanchor}.]
By Remark~\ref{rem:eta}, we just need to prove the ``only if'' part of the theorem.
Suppose there exists a symplectic surface $\Sigma\subset X_{\Omega'}$ with
\[
\partial\Sigma = e_{0,1}(X_{\Omega'}) - e_{1,0}(X_\Omega).
\]
We need to prove the inequality \eqref{eqn:bxy}. As in the proof of Theorem~\ref{thm:polydiskball}, it is sufficient to prove the non-strict version of this inequality. In the notation of Definition~\ref{def:Omegaaction}, this inequality is equivalent to
\begin{equation}
\label{eqn:bxyequiv}
\mathcal{A}_{\Omega'}(e_{0,1}) \ge \mathcal{A}_\Omega(h_{1,1}).
\end{equation}

Choose $L>\mathcal{A}_{\Omega'}(e_{0,1})$. By Lemma~\ref{lem:convexbijection}, we can choose $L$-nice approximations $X_-\subset X_\Omega$ and $X_+\subset X_{\Omega'}$ with $X_-\subset X_+$. Let $W$ be the symplectic cobordism at the beginning of \S\ref{sec:convexproofs}.

As in the proof of Theorem~\ref{thm:polydiskball}, from the surface $\Sigma$ we can construct a surface $C_1$ in $\overline{W}$, and we can choose a cobordism-admissible almost complex structure $J_1$ on $\overline{W}$, such that $C_1\in\mathcal{M}^{J_1}(e_{0,1},e_{1,0})$. By the relative adjunction formula (e.g.\ as packaged by the $J_0$ index), $C_1$ must be a cylinder. Then, as in the proof of Proposition~\ref{prop:eab}, the curve $C_1$ satisfies automatic transversality.

Likewise, as in the proof of Proposition~\ref{prop:eab}, from the surface $W\cap(\{0\}\times \mathbb{C})$ we can construct a surface $C_2\in\overline{W}$, and we can choose a cobordism-admissible almost complex structure $J_2$ on $\overline{W}$, such that $C_2\in\mathcal{M}^{J_2}(e_{0,1},e_{0,1})$. 

By Lemma~\ref{lem:convexlinking}(b), we have $\mathcal{M}^{J_2}(e_{0,1},e_{1,0}) = \emptyset$.

Let $\{J_\tau\}_{1\le\tau\le 2}$ be a generic one-parameter family of cobordism-admissible almost complex structures on $\overline{W}$ interpolating between $J_1$ and $J_2$ above. Since $\mathcal{M}^{J_1}(e_{0,1},e_{1,0})$ is nonempty, and since automatic transversality holds for the moduli spaces $\mathcal{M}^{J_\tau}(e_{0,1},e_{1,0})$, some breaking must occur for $\tau\in(1,2)$. That is, there exists $\tau\in(1,2)$ and a broken holomorphic current
\[
(\mathcal{C}_{N_-},\ldots,\mathcal{C}_{N_+}) \in \overline{\mathcal{M}^{J_\tau}}(e_{0,1},e_{1,0})
\]
as in \S\ref{ss:cob} with $N_+ > N_-$.

We claim that $N_+=0$. Suppose to the contrary that $N_+>0$. Let $J_\tau^+$ denote the almost complex structure on $\R\times\partial X_+$ determined by the restriction of $J_\tau$ to $[0,\infty)\times \partial X_+$. Then $\mathcal{C}_{N_+}$ is a somewhere-injective holomorphic curve in $\mathcal{M}^{J_\tau^+}(e_{0,1},\alpha)$, and it follows from the ECH index inequality in \cite[Thm.\ 4.15]{Hutchings09} (see also the exposition in \cite[\S3.4]{Hutchingslec}) that $I(\alpha)\le 2$. The only possibility is that $\alpha=e_{1,0}$. But such a holomorphic curve cannot exist by automatic transversality as in the proof of Proposition~\ref{prop:eab}, Step 2.

Since $N_+=0$, it follows that $\mathcal{C}_0$ is a somewhere injective curve in $\mathcal{M}^{J_\tau}(e_{0,1},\alpha)$, and by the ECH index inequality (loc. cit.) we have $I(\alpha)\le 3$. The only possibility is that $\alpha=h_{1,1}$. Then by \eqref{eqn:convexbijectionaction} and Remark~\ref{rem:action}, we have
\[
\mathcal{A}_{\Omega_+}(e_{0,1}) \ge \mathcal{A}_{\Omega_-}(h_{1,1}) - 2 L^{-1}.
\]
Since we can choose $L$ arbitrarily large, this implies the desired inequality \eqref{eqn:bxyequiv}.
\end{proof}

%%%%%%%%%%%%%%%%%%%%%%%%%%%%%%%%%%%%%%%%%%%%%%%%%%%%%%

\subsection{Results for concave toric domains}
\label{sec:concaveproofs}

To finish up, we now prove Theorem~\ref{thm:concave2} on 2-anchored symplectic embeddings of concave toric domains. The proof is very similar, and in some sense ``dual'', to the proof of Theorem~\ref{thm:convex2} for convex toric domains.

Sinilarly to the beginning of \S\ref{sec:convexproofs}, let $X_{\Omega_-}$ and $X_{\Omega_+}$ be concave toric domains, and suppose there exists a symplectic embedding $\varphi:X_{\Omega_-}\to\op{int}(X_{\Omega_+})$. Let $L>\max(a(\Omega_-),a(\Omega_+),b(\Omega_-),b(\Omega_+))$ and let $X_-\subset X_{\Omega_-}$ and $X_+\supset X_{\Omega_+}$ be $L$-nice approximations provided by Lemma~\ref{lem:concavebijection}. Assume also that the Reeb orbits $e_{1,0}$ and $e_{0,1}$ in $X_-$ and $X_+$ all have the same (large) rotation number (we will need this for automatic transversality below). Write $Y_\pm=\partial X_\pm$ and let $\lambda_\pm$ denote the induced contact form on $Y_\pm$. By Lemma~\ref{lem:concavebijection}, ECH generators in $Y_-$ or $Y_+$ with symplectic action and ECH index less than $L$ can be identified with concave generators with combinatorial ECH index and $\Omega_-$-action or $\Omega_+$-action less than $L$, respectively, via the bijection $\imath$, and we omit $\imath$ from the notation. Let $W$ be the symplectic cobordism from $(Y_+,\lambda_+)$ to $(Y_-,\lambda_-)$ given by $X_+\setminus\varphi(\op{int}(X_-))$ as in \S\ref{ss:cob}. Let $J$ be a cobordism-admissible almost complex structure on $\overline{W}$ as in Definition~\ref{def:cobordismadmissible}.

\begin{lem}
\label{lem:beyondconcave}
Let $a,b>0$ be relatively prime positive integers, and let $\Lambda$ be a concave generator with $\widecheck{I}(\Lambda)=\widecheck{I}(e_{a,b})$. Assume that $L>\mathcal{A}_{\Omega_+}(\Lambda)$. Suppose there exist $J$-holomorphic cylinders $C_1\in\mathcal{M}^J(e_{1,0},e_{1,0})$ and $C_2\in\mathcal{M}^J(e_{0,1},e_{0,1})$ and a $J$-holomorphic current $\mathcal{C}\in\mathcal{M}^J(\Lambda,e_{a,b})$. Then $\Lambda = e_{a,b}$.
\end{lem}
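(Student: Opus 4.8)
The plan is to mirror the proof of Lemma~\ref{lem:beyond}, making the changes forced by the concave index formulas and by the fact that here $e_{a,b}$ sits at the \emph{negative} end of the cobordism. Since $a,b>0$, no edge of $\Lambda$ or of $e_{a,b}$ is horizontal or vertical, so neither $e_{1,0}$ nor $e_{0,1}$ appears in either generator. Consequently the winding-number terms that appear in the proof of Lemma~\ref{lem:convexlinking} are absent here, and the intersection numbers of $\mathcal{C}$ with the cylinders $C_1,C_2$ compute the relevant linking numbers exactly: by \eqref{eqn:exceptionallinking}, the slice of $\mathcal{C}$ near the positive end links $e_{1,0}$ and $e_{0,1}$ in the amounts $x(\Lambda)$ and $y(\Lambda)$, while the slice near the negative end links them in the amounts $x(e_{a,b})$ and $y(e_{a,b})$. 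Intersection positivity of $\mathcal{C}$ against $C_1$ and $C_2$ therefore gives
\[
x(\Lambda)\ge x(e_{a,b}),\qquad y(\Lambda)\ge y(e_{a,b}),\qquad\text{so}\qquad x(\Lambda)+y(\Lambda)\ge a+b,
\]
the inequalities being reversed in direction relative to \eqref{eqn:m3}--\eqref{eqn:m4} precisely because $\Lambda$ now lies at the positive end. Note that this step needs only the full current, not any reduction.

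Next I would extract the component $C$ of $\mathcal{C}$ whose negative end is asymptotic to $e_{a,b}$. As $e_{a,b}$ is simple and occurs with multiplicity one at the bottom, $C$ is somewhere injective, occurs in $\mathcal{C}$ with multiplicity one, and has a single negative end. Feeding $C$ into the $J_0$ bound \eqref{eqn:J0bound}---its negative end contributing $2\cdot1-1=1$, and its positive ends contributing at least $e(\Lambda)+h(\Lambda)$ once they account for every simple orbit of $\Lambda$ (as in Remark~\ref{rem:he})---gives $\widecheck{J}_0(\Lambda)-\widecheck{J}_0(e_{a,b})\ge 2g(C)-1+e(\Lambda)+h(\Lambda)$. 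On the other hand, the concave formula \eqref{eqn:Jcheck}, equation \eqref{eqn:concavebijectionj0}, the identity $x(e_{a,b})+y(e_{a,b})=a+b$, and the hypothesis $\widecheck{I}(\Lambda)=\widecheck{I}(e_{a,b})$ evaluate the same quantity as $2(a+b)-2(x(\Lambda)+y(\Lambda))+e(\Lambda)-1$. Combining and cancelling leaves
\[
2g(C)+h(\Lambda)\le 2(a+b-x(\Lambda)-y(\Lambda))\le 0,
\]
the last step being the linking constraint. Hence $g(C)=0$, $h(\Lambda)=0$, $x(\Lambda)=x(e_{a,b})$, and $y(\Lambda)=y(e_{a,b})$. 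Then \eqref{eqn:checkI} with $h(\Lambda)=0$ forces $\widecheck{\mathcal{L}}(\Lambda)=\widecheck{\mathcal{L}}(e_{a,b})$, and since $\Lambda$ is the graph of a convex function sharing the endpoints of the segment underlying $e_{a,b}$, equality of lattice-point counts forces $\Lambda=e_{a,b}$, exactly as in Lemma~\ref{lem:beyond}.

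The step needing the most care, which I expect to be the main obstacle, is the reduction to the single curve $C$. In Lemma~\ref{lem:beyond} this was automatic: with $e_{a,b}$ at the positive end, exactness of $W$ (every nonconstant holomorphic curve has a positive end) forced $\mathcal{C}$ to be a single somewhere-injective curve of multiplicity one. Here $e_{a,b}$ is at the negative end, so a priori $\mathcal{C}$ may contain further components all of whose ends are positive (asymptotic to orbits of $\Lambda$), and then the positive ends of $C$ need not exhaust $\Lambda$---which is what the bound above tacitly assumed. I would handle this by running the adjunction estimate on the full current: the genera of the extra components and their pairwise intersection numbers $C_k\cdot C_l\ge 0$ are all non-negative, so including them only enlarges the left-hand side of $2g+h(\Lambda)\le 2(a+b-x(\Lambda)-y(\Lambda))$, whose right-hand side is already $\le 0$. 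One still concludes $h(\Lambda)=0$ and $x(\Lambda)+y(\Lambda)=a+b$, whence $\Lambda=e_{a,b}$; since $\Lambda$ is then the single simple orbit $e_{a,b}$, there is no room for extra components and $\mathcal{C}$ must have been the single cylinder over $e_{a,b}$ all along. Pinning down the precise disconnected-current form of the $J_0$ estimate, and verifying that the positive ends of the whole current reproduce the count $e(\Lambda)+h(\Lambda)$, is the technical crux of the argument.
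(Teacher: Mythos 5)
Your outline matches the paper's up to the point you yourself flag as the crux, and your linking-number and $J_0$/index bookkeeping is correct (including the observation that the winding-number corrections of Lemma~\ref{lem:convexlinking} are absent because $a,b>0$). But the proposed fix for the reduction step does not work, and this is a genuine gap. The bound \eqref{eqn:J0bound} is stated for a \emph{connected} somewhere injective curve; in any disconnected-current version, each additional connected component contributes its own $-2$ (its Euler characteristic term $2g_k-2$), and this is offset only by the intersection numbers $2\sum_{k<l}C_k\cdot C_l$ and by extra end contributions, which can both vanish: a genus-zero component of $\mathcal{C}_1\in\mathcal{M}^J(\Lambda_1,\emptyset)$ with a single positive end, disjoint from the other components, contributes $-1$ to the left-hand side. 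So adding components \emph{weakens} the inequality rather than "only enlarging the left-hand side," and you can no longer conclude $2g+h(\Lambda)\le 0$. (The total-end count over all components does reproduce $e(\Lambda)+h(\Lambda)$, but that is not where the problem lies.)

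The paper closes this gap by a different mechanism: write $\mathcal{C}=C_0+\mathcal{C}_1$ with $C_0\in\mathcal{M}^J(\Lambda_0,e_{a,b})$ the (unique, multiplicity-one, somewhere injective) component carrying the negative end, and $\mathcal{C}_1\in\mathcal{M}^J(\Lambda_1,\emptyset)$. If $\Lambda_1\neq\emptyset$, then deleting its edges strictly shrinks the lattice-point count in \eqref{eqn:checkI}, so $\widecheck{I}(\Lambda_0)<\widecheck{I}(\Lambda)=\widecheck{I}(e_{a,b})$; but then $C_0$ violates the index monotonicity $I(\alpha_+)\ge I(\alpha_-)$ of Lemma~\ref{lem:tamecobordism}(a). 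Hence $\mathcal{C}_1=0$ and the single-curve $J_0$ argument you wrote applies verbatim. If you want to salvage your version, you should replace the "run adjunction on the full current" step with this index-monotonicity argument (note it uses the $L$-tameness of the cobordism, which is an input you did not invoke anywhere).
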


\begin{proof}
To start, we claim that $\mathcal{C}$ consists of a single somewhere injective curve $C$ with multiplicity $1$. To prove this, let $C_0\in\mathcal{M}^J(\Lambda_0,e_{a,b})$ denote the component of $\mathcal{C}$ with a negative end asymptotic to $e_{a,b}$, and write $\mathcal{C}_1=\mathcal{C}-C_0\in\mathcal{M}^J(\Lambda_1,\emptyset)$. Thus $\Lambda=\Lambda_0\Lambda_1$. We need to show that $\mathcal{C}_1=0$. If not, then $\Lambda_1\neq\emptyset$. It then follows using the index formula \eqref{eqn:checkI} that $\widecheck{I}(\Lambda_0) < \widecheck{I}(\Lambda) = \widecheck{I}(e_{a,b})$. Then the existence of $C$ contradicts Lemma~\ref{lem:tamecobordism}(a).

It follows from the $J_0$ bound \eqref{eqn:J0bound}, equation \eqref{eqn:concavebijectionj0}, and Remark~\ref{rem:he} that
\begin{equation}
\label{eqn:useJ0boundconcave}
\widecheck{J}_0(\Lambda) - \widecheck{J}_0(e_{a,b}) \ge 2 g(C) - 1 + e(\Lambda) + h(\Lambda).
\end{equation}
Since $\widecheck{I}(\Lambda)=\widecheck{I}(e_{a,b})$, it follows from equation \eqref{eqn:Jcheck} that
\begin{equation}
\label{eqn:useJ0formulaconcave}
\widecheck{J}_0(\Lambda) - \widecheck{J}_0(e_{a,b}) = 2(b-x(\Lambda) + a-y(\Lambda)) + e(\Lambda) - 1. 
\end{equation}
Since $C$ has positive intersections with $C_2$, as in Lemma~\ref{lem:convexlinking} we have
\begin{equation}
\label{eqn:m3concave}
x(\Lambda) \ge b.
\end{equation}
(This is simpler than the convex case in Lemma~\ref{lem:convexlinking} because $C$ does not have any ends asymptotic to $e_{0,1}$.)
Likewise, since $C$ has positive intersections with $C_1$, we have
\begin{equation}
\label{eqn:m4concave}
y(\Lambda) \ge a.
\end{equation}

Combining \eqref{eqn:useJ0boundconcave}, \eqref{eqn:useJ0formulaconcave}, \eqref{eqn:m3concave}, and \eqref{eqn:m4concave}, we obtain
\[
2 g(C) + h(\Lambda) \le 0,
\]
with equality only if $x(\Lambda) = b$ and $y(\Lambda) = a$. The rest of the argument proceeds as in the proof of Lemma~\ref{lem:beyond}.
\end{proof}

\begin{prop}
\label{prop:eabconcave}
Let $X_\Omega$ be a concave toric domain and let $a,b> 0$ be positive integers. Let $X$ be an $L$-nice perturbation of $X_\Omega$ where $L$ is large with respect to $a$, $b$, and $\Omega$. Let $Y=\partial X$ and let $\lambda$ denote the induced contact form on $Y$. Let $J_+$ be a generic $\lambda$-compatible almost complex structure on $\R\times Y$. Let $x$ be a cycle in $(ECC^L(Y,\lambda),J_+)$ representing the nonzero class in $ECH_*(Y,\lambda)$ with grading $\widecheck{I}(e_{a,b})$. Then $e_{a,b}$ is a summand in $x$.
\end{prop}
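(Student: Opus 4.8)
The plan is to prove Proposition~\ref{prop:eabconcave} by the method ``dual'' to Proposition~\ref{prop:eab}: first establish it for a suitable ellipsoid, and then transport the conclusion to a general concave $X_\Omega$ by a cobordism map, using Lemma~\ref{lem:beyondconcave} to control which generators can contribute. Since $0\in\op{int}(\Omega)$, for $c>0$ small we have $E(ac,bc)\subset\op{int}(X_\Omega)$, and by the concave analogue of \cite[Lem.\ 2.1(a)]{Hutchings16} we may arrange that $e_{a,b}$ is the unique minimal-action concave generator in its grading for $E(ac,bc)$. I would set $X_{\Omega_-}=E(ac,bc)$ and $X_{\Omega_+}=X_\Omega$, pass to $L$-nice approximations $X_-\subset E(ac,bc)$ and $X_+\supset X_\Omega$ with $L\gg a,b,c,\Omega$, and form the cobordism $W$ and completion $\overline W$ as at the beginning of \S\ref{sec:concaveproofs}, so that $e_{a,b}$ lives on the bottom end $Y_-=\partial X_-$. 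Choosing a cobordism-admissible $J$ restricting to the given $J_+$ on $[0,\infty)\times Y_+$, and proceeding exactly as in Proposition~\ref{prop:eab}, Step~2, the complex lines $\C\times\{0\}$ and $\{0\}\times\C$ produce trivial half-cylinders over $e_{1,0}$ and $e_{0,1}$; after perturbing $J$ to be generic, automatic transversality (using the equal-rotation-number hypothesis recorded in \S\ref{sec:concaveproofs}) yields persistent $J$-holomorphic cylinders $C_1\in\mathcal{M}^J(e_{1,0},e_{1,0})$ and $C_2\in\mathcal{M}^J(e_{0,1},e_{0,1})$.

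Granting the ellipsoid case, I would finish as follows. Let $x\in ECC^L(Y_+,\lambda_+)$ be a cycle representing the nonzero class in grading $\widecheck I(e_{a,b})$, and let $\phi:ECC^L(Y_+,\lambda_+)\to ECC^L(Y_-,\lambda_-)$ be a chain map from Proposition~\ref{prop:cobordism}(c). By the same argument as in Proposition~\ref{prop:eab}, Step~3 (the cobordism map is an isomorphism on the ECH of $S^3$), $\phi(x)$ is a cycle representing the nonzero class on $Y_-$, so by the ellipsoid case $\langle\phi(x),e_{a,b}\rangle=1$. Now expand
\[
\langle\phi(x),e_{a,b}\rangle=\sum_{\Lambda}\langle x,\Lambda\rangle\,\langle\phi(\Lambda),e_{a,b}\rangle,
\]
the sum over concave generators $\Lambda$ on $Y_+$ with $\widecheck I(\Lambda)=\widecheck I(e_{a,b})$. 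Whenever $\langle\phi(\Lambda),e_{a,b}\rangle\neq0$, Proposition~\ref{prop:cobordism}(c) together with Lemma~\ref{lem:tamecobordism}(b) supplies a current $\mathcal{C}\in\mathcal{M}^J(\Lambda,e_{a,b})$, whence Lemma~\ref{lem:beyondconcave} (applicable because of $C_1,C_2$ and $L\gg a,b$) forces $\Lambda=e_{a,b}$. Thus the sum collapses to $\langle x,e_{a,b}\rangle\,\langle\phi(e_{a,b}),e_{a,b}\rangle$, and since it equals $1$ we conclude $\langle x,e_{a,b}\rangle=1$, i.e.\ $e_{a,b}$ is a summand of $x$.

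This reduces everything to the ellipsoid base case, which I expect to be the main obstacle. For $E(ac,bc)$ it suffices to prove two facts: (i) $\partial_{J_+}e_{a,b}=0$, and (ii) $\langle\partial_{J_+}\mu,e_{a,b}\rangle=0$ for every generator $\mu$. Granting these, $e_{a,b}$ is a cycle that is not a boundary, hence represents the nonzero class; and any other representative differs from $e_{a,b}$ by some $\partial_{J_+}(\cdot)$, so by (ii) it still contains $e_{a,b}$ with coefficient $1$. Both (i) and (ii) should follow from the minimality of $e_{a,b}$ combined with an intersection-positivity argument in the symplectization $\R\times Y_-$ that parallels Lemma~\ref{lem:beyondconcave}: the trivial cylinders $\R\times e_{1,0}$ and $\R\times e_{0,1}$ are holomorphic, so any curve with an end on $e_{a,b}$ obeys the linking constraints of Lemma~\ref{lem:concavebijection} (as in the derivation of \eqref{eqn:m3concave}--\eqref{eqn:m4concave}), and feeding these into the $J_0$ bound \eqref{eqn:J0bound} and the index identity \eqref{eqn:Jcheck} should exclude the relevant differential curves. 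The delicate point is (ii): minimality of $e_{a,b}$ in its own grading does not by itself prevent $e_{a,b}$ from being hit by a higher-action generator of grading $\widecheck I(e_{a,b})+1$, so the argument genuinely needs the intersection bounds forced by $C_1$ and $C_2$ (equivalently, a concave analogue of \cite[Lem.\ 5.5]{Hutchings16}). I would isolate this as a standalone lemma, or else invoke the combinatorial ECH differential for perturbed ellipsoids from \cite{concave}. Note that the ``tameness'' hypothesis of Lemma~\ref{lem:tamecobordism} forbids comparing the concave and convex perturbations of the same ellipsoid through a thin product cobordism, so one cannot simply cite Proposition~\ref{prop:eab} for the base case.
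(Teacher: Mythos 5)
Your reduction of the general case to the ellipsoid case is essentially the paper's argument: place $E(ac,bc)$ with $c$ small inside $X_\Omega$, form the cobordism with $X_\Omega$ at the positive end, use Proposition~\ref{prop:cobordism}(c) and Lemma~\ref{lem:tamecobordism}(b) to extract a current $\mathcal{C}\in\mathcal{M}^J(\Lambda,e_{a,b})$ from any summand $\Lambda$ of $x$ with $\langle\phi(\Lambda),e_{a,b}\rangle\neq 0$, and conclude $\Lambda=e_{a,b}$ from Lemma~\ref{lem:beyondconcave}. You are also correct that the ellipsoid base case carries the real content, and that tameness prevents simply quoting Proposition~\ref{prop:eab}.

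The base case, however, has a genuine gap, in two respects. First, you invoke the wrong extremality: for the concave perturbation of $E(ac,bc)$, the generator $e_{a,b}$ uniquely \emph{maximizes} (not minimizes) $\Omega$-action among concave generators of its grading with all edges labeled `$e$'. This is the dual of \cite[Lem.\ 2.1(a)]{Hutchings16} and is what the max-type formula for ECH capacities of concave toric domains \cite[Thm.\ 1.21]{concave} calls for. Maximality is precisely what makes the conclusion soft: every cycle representing the nonzero class in grading $\widecheck{I}(e_{a,b})$ must contain a summand of action at least $c_k\approx\mathcal{A}_\Omega(e_{a,b})$, and unique maximality forces that summand to be $e_{a,b}$; with ``minimal'' in place of ``maximal'' this argument does not run, since then $c_k$ exceeds $\mathcal{A}_\Omega(e_{a,b})$ and the filtration tells you nothing about $e_{a,b}$. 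Second, your direct plan of proving $\partial_{J_+}e_{a,b}=0$ and $\langle\partial_{J_+}\mu,e_{a,b}\rangle=0$ from intersection positivity plus the $J_0$ bound does not close the delicate point you yourself flag: feeding $x(\mu)\ge a$ and $y(\mu)\ge b$ into \eqref{eqn:J0bound} and \eqref{eqn:Jcheck} for an index-one curve from $\mu$ to $e_{a,b}$ only yields $g=0$, $x(\mu)=a$, $y(\mu)=b$, $h(\mu)=1$, which leaves the candidate $\mu=h_{a,b}$ (same endpoints, same lattice count, essentially the same action), so neither linking nor the action filtration rules out $\langle\partial_{J_+}h_{a,b},e_{a,b}\rangle\neq 0$ --- and if that coefficient were nonzero, the proposition would in fact fail. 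Your fallback, the combinatorial differential, is exactly what Remark~\ref{rem:combdiff} flags as not fully established. The capacity argument via the concave analogue of \cite[Lem.\ 5.5]{Hutchings16} is therefore not an optional convenience but the essential step.
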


\begin{proof}
We first prove the proposition when $X_\Omega$ is an ellipsoid $E(bc,ac)$ for $c>0$. In this case, similarly to \cite[Lem.\ 2.1(a)]{Hutchings16}, $e_{a,b}$ is ``maximal'' in the sense that it uniquely maximizes $\Omega$-action among all concave generators with the same ECH index as $e_{a,b}$ and with all edges labeled `$e$'. The proposition then follows similarly\footnote{The proof of \cite[Lem.\ 5.5]{Hutchings16}, which applies to convex toric domains, uses the formula for the ECH capacities of convex toric domains in \cite[Lem.\ 5.6]{Hutchings16}. In the present situation we instead need to use the formula for the ECH capacities of concave toric domains in \cite[Thm.\ 1.21]{concave}.} to \cite[Lem.\ 5.5]{Hutchings16}.

To prove the proposition for a general concave toric domain $X_\Omega$, choose $c>0$ sufficiently small so that $E(bc,ac)\subset\op{int}(X_\Omega)$. Let $X_-\subset E(bc,ac)$ and $X_+\supset X_\Omega$ be $L$-nice perturbations, where $L$ is large with respect to $a$, $b$, and $\Omega$. Let $Y_\pm$ and $W$ be as in the statement of Lemma~\ref{lem:beyondconcave}. Let $J$ be a cobordism-admissible almost complex structure on $\overline{W}$ which restricts to $J_+$ on $[0,\infty)\times Y_+$, and let $J_-$ denote the restriction of $J$ to $(\infty,0] \times Y_-$.

As in Step 2 of the proof of Proposition~\ref{prop:eab}, we can choose $J$ to be generic as needed for Lemma~\ref{lem:tamecobordism} and so that there exist $J$-holomorphic cylinders $C_1\in\mathcal{M}^J(e_{1,0},e_{1,0})$ and $C_2\in\mathcal{M}^J(e_{0,1},e_{0,1})$.

Let
\[
\phi: (ECC_*^L(Y_+,\lambda_+,),J_+) \longrightarrow (ECC_*^L(Y_-,\lambda_-),J_-)
\]
be a chain map as provided by Proposition~\ref{prop:cobordism}(c). By Proposition~\ref{prop:cobordism}(a),(b), $\phi(x)$ is a cycle representing the nonzero generator in $ECH(Y_-,\lambda_-)$ with grading $\widecheck{I}(e_{a,b})$. By the ellipsoid case, $e_{a,b}$ is a summand in $\phi(x)$. Therefore $x$ contains a summand $\Lambda$ with $\langle \phi(\Lambda),e_{a,b}\rangle\neq 0$. By Lemma~\ref{lem:tamecobordism}(b), there exists a $J$-holomorphic current $\mathcal{C}\in\mathcal{M}^J(\Lambda,e_{a,b})$. By Lemma~\ref{lem:beyondconcave}, $\Lambda=e_{a,b}$.
\end{proof}

\begin{remark}
For suitable $J_+$, Proposition~\ref{prop:eabconcave} can also be deduced from the formula for the ECH differential in \cite[Prop.\ 3.3]{Trejos}, together with some algebraic calculations similar to \cite{HutchingsSullivan06}.
\end{remark}

\begin{lem}
\label{lem:phiabconcave}
Under the assumptions of Lemma~\ref{lem:beyondconcave}, let
\[
\phi: (ECC(Y_+,\lambda_+),\partial_{J_+}) \longrightarrow (ECC(Y_-,\lambda_-),\partial_{J_-})
\]
be a chain map as provided by Proposition~\ref{prop:cobordism}(c). Then
\[
\langle \phi(e_{a,b}),e_{a,b}\rangle \neq 0.
\]
\end{lem}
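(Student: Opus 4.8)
The plan is to imitate the proof of the convex analogue Lemma~\ref{lem:phiab}, but with one extra step forced by the fact that in the concave case $e_{a,b}$ is not by itself a cycle representing a nonzero ECH class (its iterates have negative Conley--Zehnder index). By Proposition~\ref{prop:eabconcave}, $e_{a,b}$ appears only as one summand of a cycle representing the nonzero class in the appropriate degree, so the conclusion is correspondingly weaker than in the convex case. First I would fix $L$ large with respect to $a,b,\Omega_\pm$ and choose a cycle $x_+\in ECC^L(Y_+,\lambda_+)$ representing the nonzero class in $ECH(Y_+,\lambda_+)$ of grading $\widecheck{I}(e_{a,b})$; by Proposition~\ref{prop:eabconcave} applied to $Y_+$, the generator $e_{a,b}$ is a summand of $x_+$, so I may write $x_+=e_{a,b}+x_+'$, where every summand $\Lambda$ of $x_+'$ satisfies $\Lambda\neq e_{a,b}$, $\widecheck{I}(\Lambda)=\widecheck{I}(e_{a,b})$, and $\mathcal{A}_{\Omega_+}(\Lambda)<L$ (the grading is pure because a cycle representing a class in a fixed degree is a sum of generators of that degree).

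Next I would push $x_+$ through $\phi$. Since $W=X_+\setminus\varphi(\op{int}(X_-))$ is diffeomorphic to $[0,1]\times S^3$, the induced cobordism map is an isomorphism by Proposition~\ref{prop:cobordism}(b), and it preserves the grading; as $ECH(S^3)$ is one dimensional in each even degree, $\phi(x_+)$ is a cycle in $ECC^L(Y_-,\lambda_-)$ representing the nonzero class of grading $\widecheck{I}(e_{a,b})$. Applying Proposition~\ref{prop:eabconcave} now to $Y_-$ shows that $e_{a,b}$ is a summand of $\phi(x_+)$, i.e.\ $\langle\phi(x_+),e_{a,b}\rangle\neq 0$. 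Expanding, $\langle\phi(x_+),e_{a,b}\rangle=\langle\phi(e_{a,b}),e_{a,b}\rangle+\sum_{\Lambda}\langle\phi(\Lambda),e_{a,b}\rangle$, the sum being over the summands $\Lambda$ of $x_+'$, so it suffices to show each term in this latter sum vanishes.

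The heart of the argument, and the step I expect to be the main obstacle, is ruling out contributions of the other summands: I claim $\langle\phi(\Lambda),e_{a,b}\rangle=0$ for every summand $\Lambda\neq e_{a,b}$ of $x_+'$. Indeed, if this coefficient were nonzero, then Lemma~\ref{lem:tamecobordism}(b) would produce a $J$-holomorphic current $\mathcal{C}\in\mathcal{M}^J(\Lambda,e_{a,b})$; since $\widecheck{I}(\Lambda)=\widecheck{I}(e_{a,b})$ and $L>\mathcal{A}_{\Omega_+}(\Lambda)$, and since the cylinders $C_1\in\mathcal{M}^J(e_{1,0},e_{1,0})$ and $C_2\in\mathcal{M}^J(e_{0,1},e_{0,1})$ are present by hypothesis, Lemma~\ref{lem:beyondconcave} would force $\Lambda=e_{a,b}$, a contradiction. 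Thus every term in the sum vanishes and $\langle\phi(e_{a,b}),e_{a,b}\rangle=\langle\phi(x_+),e_{a,b}\rangle\neq 0$, as desired. The one point requiring care is the existence of the cycle $x_+$ inside the filtered complex $ECC^L$ together with the fact that $\phi(x_+)$ again represents the nonzero unfiltered class of the correct grading; this is where I would invoke the compatibility of the filtered maps with the direct-limit isomorphism in Proposition~\ref{prop:cobordism}(a),(b).
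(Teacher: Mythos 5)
Your proof is correct and follows essentially the same route as the paper's: both push a cycle $x$ of grading $\widecheck{I}(e_{a,b})$ through $\phi$, use Proposition~\ref{prop:cobordism}(a),(b) and Proposition~\ref{prop:eabconcave} on $Y_-$ to see that $e_{a,b}$ is a summand of $\phi(x)$, and then use Lemma~\ref{lem:tamecobordism}(b) together with Lemma~\ref{lem:beyondconcave} to identify the summand of $x$ responsible. The only cosmetic difference is that you first invoke Proposition~\ref{prop:eabconcave} on $Y_+$ to split off $e_{a,b}$ from $x$ and then rule out the other summands' contributions, whereas the paper deduces directly that the contributing summand must be $e_{a,b}$; the two phrasings are logically equivalent.
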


\begin{proof}
Let $x$ be a cycle in $(ECC(Y_+,\lambda_+),\partial_{J_+})$ representing the nonzero homology class of grading $\widecheck{I}(e_{a,b})$.  It follows from Proposition~\ref{prop:cobordism}(a),(b) that $\phi(x)$ is a cycle representing the nonzero class in $ECH(Y_-,\lambda_-)$ of the same grading. We know from Proposition~\ref{prop:eabconcave} that $e_{a,b}$ is a summand in $\phi(x)$. Therefore there is a summand $\Lambda$ in $x$ with $\langle\phi(\Lambda),e_{a,b}\rangle\neq 0$. It follows from Lemma~\ref{lem:tamecobordism}(b) and Lemma~\ref{lem:beyondconcave} that $\Lambda=e_{a,b}$.
\end{proof}

\begin{lem}
\label{lem:Aabconcave}
Let $X_{\Omega_-}$ and $X_{\Omega_+}$ be concave toric domains in $\R^4$. Suppose that for every pair of positive relatively prime integers $a,b>0$, we have
\begin{equation}
\label{eqn:Aabconcave}
A_{\Omega_-}(e_{a,b}) \le A_{\Omega_+}(e_{a,b}).
\end{equation}
Then $\Omega_- \subset \Omega_+$.
\end{lem}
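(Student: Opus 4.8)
The plan is to mirror the proof of Lemma~\ref{lem:Aab}, replacing the convex region $\Omega$ by its \emph{complement} $K_\Omega := \R^2_{\ge0}\setminus\Omega$, which is convex precisely because $X_\Omega$ is a concave toric domain. First I would record the action computation: for relatively prime $a,b>0$ the concave generator $e_{a,b}$ is a single edge with $v^\perp=(a,b)$, so $\mathcal{A}_{\Omega}(e_{a,b}) = [(a,b)]_\Omega = \min\{ax+by \mid (x,y)\in\partial_+\Omega\}$. Since $a,b>0$, the linear functional $ax+by$ attains its minimum over the whole convex set $K_\Omega$ along its lower-left boundary $\partial_+\Omega$ (moving up or to the right inside $K_\Omega$ only increases it); hence $[(a,b)]_\Omega = \min\{ax+by \mid (x,y)\in K_\Omega\}$, and $K_\Omega$ lies in the closed half-plane $K^{a,b}_\Omega := \{(x,y)\mid ax+by\ge [(a,b)]_\Omega\}$, the half-plane to the upper right of the supporting line of $K_\Omega$ with inner normal $(a,b)$.

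The key geometric step, dual to \eqref{eqn:hs1}, is to establish the representation
\[
K_\Omega = \R^2_{\ge0}\cap\bigcap_{a,b}K^{a,b}_\Omega,
\]
the intersection being over relatively prime $a,b>0$. The inclusion ``$\subset$'' is immediate from the previous paragraph. For ``$\supset$'', I would use that $K_\Omega$ is a closed convex set whose boundary consists of $\partial_+\Omega$ together with the two axis rays $\{x\ge a(\Omega),\,y=0\}$ and $\{x=0,\,y\ge b(\Omega)\}$: the supporting half-planes with positive rational normals recover $\partial_+\Omega$ by density of rational slopes and convexity, while the two axis rays are cut out by the factor $\R^2_{\ge0}$. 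This is the step I expect to need the most care, exactly as the justification of \eqref{eqn:hs1} does in the convex case; the only subtlety is the behavior near the endpoints $(a(\Omega),0)$ and $(0,b(\Omega))$, where the supporting slope may tend to $0$ or $-\infty$, and this is handled by passing to limits of supporting lines and using that $K_\Omega$ is closed.

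Finally I would translate the hypothesis. Since $\mathcal{A}_\Omega(e_{a,b})=[(a,b)]_\Omega$, the inequality \eqref{eqn:Aabconcave} reads $[(a,b)]_{\Omega_-}\le [(a,b)]_{\Omega_+}$ for every relatively prime $a,b>0$; raising the threshold shrinks the half-plane, so this says precisely $K^{a,b}_{\Omega_+}\subset K^{a,b}_{\Omega_-}$, the dual of \eqref{eqn:hs2}. Intersecting over all such $(a,b)$ and with $\R^2_{\ge0}$ gives $K_{\Omega_+}\subset K_{\Omega_-}$, and taking complements in $\R^2_{\ge0}$ yields $\Omega_-\subset\Omega_+$, as desired.
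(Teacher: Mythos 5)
Your proposal is correct and is essentially the paper's argument: the paper's proof of this lemma is literally the single sentence ``This is a slight modification of the proof of Lemma~\ref{lem:Aab},'' and your dualization --- passing to the convex complement $\R^2_{\ge0}\setminus\Omega$, writing it as an intersection of supporting upper-right half-planes with positive rational normals, and translating \eqref{eqn:Aabconcave} into the reverse inclusion of those half-planes --- is exactly the intended modification. The points you flag (closedness of the complement, behavior of supporting slopes near the axis endpoints) are genuine but minor and are handled as you indicate.
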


\begin{proof}
This is a slight modification of the proof of Lemma~\ref{lem:Aab}.
\end{proof}

\begin{proof}[Proof of Theorem~\ref{thm:concave2}.]
Suppose there exists a $2$-anchored symplectic embedding
\[
(\varphi,\Sigma_1,\Sigma_2): (X_{\Omega_-},\gamma_1,\gamma_2) \longrightarrow (X_{\Omega_+},\gamma_1,\gamma_2).
\]
We need to show that $\Omega_1\subset\Omega_2$. By Lemma~\ref{lem:Aabconcave}, it is enough to show that if $a,b>0$ are relatively prime positive integers, then the action inequality \eqref{eqn:Aabconcave} holds. This follows from Lemma~\ref{lem:phiabconcave} by the same argument as in the proof of Theorem~\ref{thm:convex2}.
\end{proof}

%\bibliography{references_dec_2022.bib}
%\bibliographystyle{plain}

\end{document}